\def\N {\mathbb{N}}
\def\Z {\mathbb{Z}}
\def\R {\mathbb{R}}
\def\C {\mathbb{C}}
\def\id{\mathrm{id}}
\def\hc#1{\mathrm{h}_{#1}}
\def\subset{\subseteq}
\renewcommand{\varepsilon}{\epsilon}
\renewcommand{\to}{\nobr\!\xymatrix@R=0cm@C=1.4em{\ar[r] &}\nobr}
\renewcommand{\mapsto}{\!\xymatrix@R=0cm@C=1.4em{\ar@{|->}[r] &}\!}
\renewcommand{\Rightarrow}{\!\xymatrix@R=0cm@C=1.4em{\ar@{=>}[r] &}\!}
\renewcommand{\Leftarrow}{\!\xymatrix@R=0cm@C=1.4em{\ar@{<=}[r] &}\!}
\newcommand{\incl}{\!\xymatrix@R=0cm@C=1.4em{\ar@{^(->}[r] &}\!}
\renewcommand{\hookrightarrow}{\incl}
\renewcommand\Leftrightarrow{\!\xymatrix@R=0cm@C=1.4em{\ar@{<=>}[r] &}\!}
\def\quand{\quad\text{ and }\quad}
\def\quomma{\quad\text{, }\quad}
\def\erf#1{(\ref{#1})}
\def\stackerf#1#2{\stackrel{\text{\erf{#1}}}{#2}}
\def\brackets#1{\IfStrEq{#1}{-}{}{(#1)}}
\def\subindex#1{\IfStrEq{#1}{-}{}{_{#1}}}
\newcommand{\alxydim}[2]{\begin{aligned}\xymatrix#1{#2}\end{aligned}}
\def\bigset#1#2{\left\lbrace\;\begin{minipage}[c]{#1}\begin{center}#2\end{center}\end{minipage}\;\right\rbrace}
\newlength{\myl}
\def\nobr{~\hspace{-0.26em}}
\def\maps{\nobr:\nobr}
\def\df{\nobr := \nobr}
\def\eq{\nobr = \nobr}
\let\Oldin\in\renewcommand{\in}{\nobr\Oldin\nobr}
\let\Oldtimes\times\renewcommand{\times}{\nobr\Oldtimes}
\let\Oldotimes\otimes\renewcommand{\otimes}{\nobr\Oldotimes}
\def\aut{\mathrm{Aut}}
\def\AUT{\mathrm{AUT}}
\newlength{\widthtmp}
\def\length#1{\settowidth{\widthtmp}{#1}\the\widthtmp}
\def\trivlin{\mathbf{I}}
\newcounter{denseversion}
\newcounter{authorcounter}
\newcounter{adresscounter}
\def\title#1{\gdef\@title{#1}}
\def\@title{}
\def\subtitle#1{\gdef\@subtitle{#1}}
\def\@subtitle{}
\def\authortagsused{0}
\def\adresstag#1{\if!#1!\else$^{\;#1\;}$\fi}
\renewcommand{\author}[2][]{
  \stepcounter{authorcounter}
  \if!#1!\else\gdef\authortagsused{1}\fi
  \ifnum\value{authorcounter}=1
    \def\@authorstringa{#2\adresstag{#1}}
    \def\@authorstringb{#2}
    \def\@authorstringc{#2\adresstag{#1}}
  \else
    \g@addto@macro\@authorstringa{\ and #2\adresstag{#1}}
    \g@addto@macro\@authorstringb{\ and #2}
    \g@addto@macro\@authorstringc{\\#2\adresstag{#1}}
  \fi}
\def\@author{\ifnum\value{denseversion}=0\@authorstringa\else\@authorstringb\fi}
\def\@adressstringa{}
\def\@adressstringb{}
\newcommand{\adress}[2][]{
  \stepcounter{adresscounter}
  \ifnum\value{adresscounter}=1
    \g@addto@macro\@adressstringa{\ifnum\authortagsused=0\def\br{\\}\else\def\br{, }\fi\adresstag{#1}#2}
    \g@addto@macro\@adressstringb{\def\br{\\}\adresstag{#1}\parbox[t]{14cm}{#2}}
  \else
    \g@addto@macro\@adressstringa{\\[\bigskipamount]\adresstag{#1}#2}
    \g@addto@macro\@adressstringb{\\[\medskipamount]\adresstag{#1}\parbox[t]{14cm}{#2}}
  \fi}
\def\preprint#1{\gdef\@preprint{#1}}
\def\@preprint{}
\def\keywords#1{\gdef\@keywords{#1}}
\def\@keywords{}
\def\msc#1{\gdef\@msc{#1}}
\def\@msc{}
\def\email#1{
   \gdef\@email{#1}
   \g@addto@macro\@authorstringc{ {\it (#1)}}}
\def\@email{}
\def\dedication#1{\gdef\@dedication{#1}}
\def\@dedication{}
\def\mybaselinestretch#1{\gdef\@mybaselinestretch{#1}}
\def\@mybaselinestretch{}
\def\refname{References}
\renewcommand{\baselinestretch}{\@mybaselinestretch}
\def\denseversion{
  \setcounter{denseversion}{1}
  \newgeometry{left=3cm,right=3cm,top=3cm}
  \mybaselinestretch{1.1}
  \renewcommand{\baselinestretch}{\@mybaselinestretch}
  \normalfont
  \fancyfoot[C]{\itshape{--$\,\,$\thepage$\,\,$--}}}
\newlength{\myparskip}
\newlength{\myproofparskip}
\renewcommand{\emph}[1]{\def\reserved@a{it}\ifx\f@shape\reserved@a\uline{#1}\else\textit{#1}\fi}
\newcommand{\mytableofcontents}{
   \ifnum\value{denseversion}=0
     \tableofcontents
   \else
     \renewcommand{\baselinestretch}{0.8}
     \normalfont
     \tableofcontents
     \renewcommand{\baselinestretch}{\@mybaselinestretch}
     \normalfont
   \fi}
\newlength{\zeilenlaenge}
\def\putindent#1{
  \settowidth{\zeilenlaenge}{#1}
  \ifnum\zeilenlaenge>\textwidth
    #1
  \else
    \noindent #1
  \fi
}
\def\href#1#2{#2}
\def\kohyp{
  \usepackage{hyperref}
  \hypersetup{
    linktocpage = true,
    pdftitle = {\@title},
    pdfauthor = {\@author},
    pdfkeywords = {\@keywords},    
    bookmarksopen = true,
    bookmarksopenlevel = 1
  }}  
\def\showkeywords{\begin{flushleft}\footnotesize\textbf{Keywords}: \@keywords\end{flushleft}}
\def\showmsc{\begin{flushleft}\footnotesize\textbf{MSC 2010}: \@msc\end{flushleft}}
\newcounter{mythm}[subsection]
\newcounter{mainthm}
\def\setsecnumdepth#1{
  \setcounter{secnumdepth}{#1}
  \setcounter{mythm}{0}
  \ifnum \c@secnumdepth >0
    \ifnum \c@secnumdepth >1
      \def\themythm{\thesubsection.\arabic{mythm}}
      \numberwithin{equation}{subsection}
      \renewcommand\theequation{\thesubsection.\arabic{equation}}
    \else
      \def\themythm{\thesection.\arabic{mythm}}
      \numberwithin{equation}{section}
      \renewcommand\theequation{\thesection.\arabic{equation}}
    \fi
  \else
    \def\themythm{\arabic{mythm}}
  \fi}
\newenvironment{mythmenv}{\strut\ \setlength{\parskip}{\myproofparskip}}{\setlength{\parskip}{\myparskip}}
\newlength{\mythmskip}
\newlength{\mythmtopskip}
\newtheoremstyle{mythmstylea}{\mythmtopskip}{\mythmskip}{\it}{}{\bf}{.}{0em}{}
\newtheoremstyle{mythmstyleb}{\mythmtopskip}{\mythmskip}{}{}{\bf}{.}{0em}{}
\theoremstyle{mythmstylea}
\newtheorem{mytheorem}[mythm]{Theorem}
\newtheorem{mydefinition}[mythm]{Definition}
\newtheorem{mycorollary}[mythm]{Corollary}
\newtheorem{myproposition}[mythm]{Proposition}
\newtheorem{mylemma}[mythm]{Lemma}
\newtheorem{mymaintheorem}[mainthm]{Theorem}
\newtheorem{mymaincorollary}[mainthm]{Corollary}
\newtheorem{mymainproposition}[mainthm]{Proposition}
\newtheorem{mymaindefinition}[mainthm]{Definition}
\theoremstyle{mythmstyleb}
\newtheorem{myremark}[mythm]{Remark}
\newtheorem{myproblem}[mythm]{Problem}
\newtheorem{myexample}[mythm]{Example}
\newtheorem{myexercise}[mythm]{Exercise}
\newenvironment{theorem}[1][]{\begin{mytheorem}[#1]\begin{mythmenv}}{\end{mythmenv}\end{mytheorem}}
\newenvironment{definition}[1][]{\begin{mydefinition}[#1]\begin{mythmenv}}{\end{mythmenv}\end{mydefinition}}
\newenvironment{proposition}[1][]{\begin{myproposition}[#1]\begin{mythmenv}}{\end{mythmenv}\end{myproposition}}
\newenvironment{lemma}[1][]{\begin{mylemma}[#1]\begin{mythmenv}}{\end{mythmenv}\end{mylemma}}
\newenvironment{remark}[1][]{\begin{myremark}[#1]\begin{mythmenv}}{\end{mythmenv}\end{myremark}}
\newenvironment{example}[1][]{\begin{myexample}[#1]\begin{mythmenv}}{\end{mythmenv}\end{myexample}}
\renewenvironment{proof}[1][Proof]{\noindent #1. \begin{mythmenv}}{\hfill$\square$\end{mythmenv}\medskip}
\def\mytitle{}
\def\zmptitle{
  \begin{tabular}{cc}
    \begin{minipage}[c]{0.4\textwidth}
      \begin{flushleft}
        \includegraphics[width=110pt]{../../tex/zmp}
      \end{flushleft}  
    \end{minipage}&
    \begin{minipage}[c]{0.55\textwidth}
      \begin{flushright}
      {\small\sf\@preprint}
      \end{flushright}
    \end{minipage}
  \end{tabular}
  \vskip 2cm}
\def\maketitle{
  \setlength{\parskip}{\myparskip}  
  \newpage
  \noindent
  \mytitle
  \begin{center}
    \LARGE\@title\\
    \if!\@subtitle!\else\smallskip\LARGE\@subtitle\\\fi
    \bigskip
    \if!\@author!\else\bigskip\large\@author\\\fi
    \ifnum\value{denseversion}=0
      \if!\@adressstringa!\else\bigskip\normalsize\@adressstringa\\\fi
      \if!\@email!\else\ifnum\value{authorcounter}=1\bigskip\normalsize\textit{\@email}\\\else\fi\fi
    \else
    \fi
    \if!\@dedication!\else\bigskip\normalsize{\@dedication}\\\fi
  \end{center}
  \ifnum\value{denseversion}=0\vskip 1.5cm\else\vskip0.5cm\fi
  \if!\@draft!\thispagestyle{empty}\else\fi}
\def\kobiburl#1{
   \IfBeginWith
     {#1}
     {http://arxiv.org/abs/}
     {\kobibarxiv{#1}}
     {\kobiblink{#1}}}
\def\kobibarxiv#1{\href{#1}{\texttt{[arxiv:\StrGobbleLeft{#1}{21}]}}}
\def\kobiblink#1{
  \StrSubstitute{#1}{_}{\underline{\;\;}}[\mylink]
  \StrSubstitute{\mylink}{&}{\&}[\mylink]
  Available as: \mbox{\;}
  \href{#1}{\texttt{\mylink}}}
\newcommand{\etalchar}[1]{$^{#1}$}
\def\kobib#1{
  \begin{raggedright}
  \ifnum\value{denseversion}=0\else\small\fi

  \end{raggedright}
  \ifnum\value{denseversion}=0\else
      \noindent
      \if!\@authorstringc!\else
        \ifnum\authortagsused=0\ifnum\value{authorcounter}>1\normalsize\@authorstringc\\[\medskipamount]\else\fi\else\normalsize\@authorstringc\\[\medskipamount]\fi
      \fi
      \if!\@adressstringb!\else\normalsize\@adressstringb\\{}\fi
      \ifnum\authortagsused=0
        \ifnum\value{authorcounter}=1
          \if!\@email!\else\linebreak\normalsize\textit{\@email}\\{}\fi
        \else
        \fi
      \else
      \fi
  \fi
  }
\newenvironment{commentfigure}{\begin{comment}}{\end{comment}}
\newenvironment{sidewayscommentfigure}{\begin{minipage}}{\end{minipage}}
\def\draft#1#2#3#4{
  \ifnum#4=0
    \def\showcomments{ - Comments are not displayed}
  \else
    \renewenvironment{comment}{\begin{list}{}{\rightmargin=1cm\leftmargin=1cm}\item\sf\begin{small}}{\end{small}\end{list}}

    \def\showcomments{ - Comments are displayed}
  \fi
  \gdef\@draft{DRAFT - Version #1 - Last edited on #2 - Last edited by #3\showcomments}
  \fancyhead[C]{\footnotesize\tt\textcolor{red}{\@draft}}}
\def\@draft{}
\def\quot#1{``#1''}
\def\unifier{unifier}
\def\pt{pseudonatural transformation}
\def\pe{pseudonatural equivalence}
\newcommand{\loctrivfunct}[3]{\mathrm{Triv}^{#3}_{\pi}(#1)}
\newcommand{\loctrivfunctsmooth}[4]{\mathrm{Triv}^{#4}_{#3}(#1)^{\infty}}
\newcommand{\trans}[3]{\mathfrak{Des}_{\pi}^{#3}(#1)}
\newcommand{\ex}[1]{\mathrm{Ex}_{#1}}
\newcommand{\transsmooth}[3]{\mathfrak{Des}^{#2}_{\pi}(#1)^{\infty}}
\newcommand{\transsmoothpi}[4]{\mathfrak{Des}^{#2}_{#4}(#1)^{\infty}}
\newcommand{\transport}[4]{\mathrm{Trans}^{#2}_{#3}(M,#4)}
\newcommand{\transportX}[4]{\mathrm{Trans}^{#2}_{#3}(X,#4)}
\newcommand{\diffco}[3]{Z^{#2}_{#3}(#1)^{\infty}}
\newcommand{\gbunX}{\mathfrak{Bun}^{\nabla}_G(X)}
\newcommand{\sbunX}{\mathfrak{Bun}^{\nabla}_{S^1}(X)}
\newcommand{\upp}[3]{\mathcal{P}^{#1}_{#2}(#3)}
\newcommand{\fo}{\mathcal{D}}
\newcommand{\holo}{\mathscr{F}}
\newcommand{\ttc}[1]{\widehat{#1}}
\newcommand{\hbitor}{H\text{-}\mathrm{BiTor}}
\newcommand{\bigon}[5]{\alxydim{@C=1.2cm}{#1 \ar@/^1.5pc/[r]^{#3}="1" \ar@/_1.5pc/[r]_{#4}="2" \ar@{=>}"1";"2"|{#5} & #2}}
\newcommand{\quadrat}[9]{\alxydim{@C=1.2cm@R=1.2cm}{#1 \ar[r]^{#5} \ar[d]_{#6} & #2 \ar@{=>}[dl]|{#9} \ar[d]^{#7} \\ #3 \ar[r]_{#8} & #4}}
\newcommand{\dreieck}[7]{\alxydim{@R=1.2cm@C=0.3cm}{& #2  \ar[dr]^{#5} & \\ #1 \ar[ur]^{#4} \ar[rr]_{#6}="1" && #3 \ar@{=>}[ul];"1"|{#7}}
}
\newcommand{\kopfdreieck}[7]{\alxydim{@R=1.2cm@C=0.3cm}{#1 \ar[dr]_{#4} \ar[rr]^{#6}="1" && #3 \ar@{=>}"1";[dl]|{#1} \\ & #2  \ar[ur]_{#5} & }
}
\title{Connections on Non-Abelian Gerbes and their Holonomy}
\author[a]{Urs Schreiber}
\email{urs.schreiber@googlemail.com}
\author[b]{Konrad Waldorf}
\email{konrad.waldorf@mathematik.uni-regensburg.de}
\keywords{Parallel transport, surface holonomy, path 2-groupoid, gerbes, 2-bundles, 2-groups, non-abelian differential cohomology, non-abelian bundle gerbes}
\begin{document}

%\draft{Version 1.0, 11.4.2008}
\denseversion
\renewcommand{\baselinestretch}{1}

\maketitle

\begin{abstract}
We introduce  an axiomatic framework for the parallel transport of connections on gerbes. It incorporates parallel transport along curves and along surfaces, and is formulated in terms of gluing axioms and smoothness conditions. The smoothness conditions are imposed with respect to a strict Lie 2-group, which plays the role of a band, or structure 2-group. Upon choosing certain examples of Lie 2-groups,  our axiomatic framework reproduces in a systematical way several known concepts of gerbes with connection: non-abelian differential cocycles, Breen-Messing gerbes, abelian  and non-abelian bundle gerbes. These relationships  convey a well-defined  notion of surface holonomy from our axiomatic framework to each of these concrete models. Till now, holonomy was only known  for abelian gerbes; our approach reproduces that known concept and extends it to  non-abelian gerbes. Several new features of surface holonomy are exposed under its extension to non-abelian gerbes; for example, it carries an action of the mapping class group of the surface. 
\showkeywords
\showmsc
\end{abstract}

\setcounter{tocdepth}{2}
\mytableofcontents

\def\pg#1{\\{\itshape #1.}}
\def\hc#1{\mathrm{h}_{#1}}
\def\red{_{r\!e\!d}}
\def\Red{{\mathcal{R}ed}}

\setsecnumdepth{1}

\section{Introduction}

 Giraud introduced gerbes  in order to achieve a geometrical understanding of \emph{non-abelian} cohomology \cite{giraud}. However, already \emph{abelian} gerbes turned out to be interesting: Brylinski introduced the notion of a \emph{connection} on an abelian gerbe, and showed that these represent classes in a certain \emph{differential} cohomology theory, namely Deligne cohomology \cite{brylinski1}. Deligne cohomology in degree two  has before been related to two-dimensional conformal field theory  by Gaw\c edzki \cite{gawedzki3}. This relation is   established by means of the \emph{surface holonomy} of a connection on an abelian gerbe, which provides a term in the action functional of the field theory. 

Surface holonomy of  connections on  abelian gerbes is today well understood; see \cite{waldorf3,fuchs8} for reviews. While definitions of connections on \emph{non-abelian} gerbes have appeared \cite{breen1,aschieri}, it remained unclear what the surface holonomy of these connections is supposed to be,  how it is defined, and how it can be used.

In the present article we propose a general and systematic approach to connections on non-abelian gerbes, including notions of parallel transport and surface holonomy. Our approach is \emph{general} in the sense that it works for gerbes whose band is an arbitrary Lie 2-groupoid, and whose fibres are modelled by an arbitrary 2-category. Our approach is \emph{systematic} in the sense that it is solely based on axioms for parallel transport along surfaces, formulated in terms of gluing laws and smoothness conditions. The whole theory of connections on non-abelian gerbes is then derived as  a consequence.

In order to illustrate how this axiomatic formulation works we shall briefly review a corresponding formulation in a more familiar setting, namely the one of connections on fibre bundles; see   \cite{schreiber3}. It shows that for a Lie group $G$ the category of principal $G$-bundles with connection over a smooth manifold $X$ is equivalent to a category consisting of functors
\begin{equation}
\label{eq:functors}
F: \mathcal{P}_1(X) \to G\text{-}\mathrm{Tor}\text{.}
\end{equation}
These functors are defined on the path groupoid $\mathcal{P}_1(X)$ of the manifold $X$; its objects are the points of $X$, and its morphisms are (certain classes of)  paths in $X$. The functors \erf{eq:functors} take values in the category of $G$-torsors, i.e. smooth manifolds with a free and transitive $G$-action. 

The correspondence between principal $G$-bundles with connection and  functors \erf{eq:functors} is established by letting the functor $F$ assign to points the fibres of a given bundle, and to paths the corresponding parallel transport maps. The \emph{gluing laws} of parallel transport are precisely the axioms of a functor. The \emph{smoothness conditions} of parallel transport are more involved; they can be encoded in  the functors \erf{eq:functors} by requiring  \emph{smooth descend data} with respect to an open cover of $X$.
Functors  \erf{eq:functors} with smooth descend data are called \emph{transport functors with $G$-structure} -- they constitute an axiomatic formulation of the parallel transport of connections in $G$-bundles.

In Sections \ref{sec1} and  \ref{sec6_3} of the present article we generalize this axiomatic formulation to  connections on gerbes. Our formulation does not use any existing concept of a gerbe with connection  --- such concepts are an \emph{output} of our approach. It is based on \emph{2-functors} defined on the \emph{path 2-groupoid} $\mathcal{P}_2(X)$ of $X$, with values in some \quot{target} 2-category $T$,
\begin{equation}
\label{eq:2functors}
F: \mathcal{P}_2(X) \to T\text{.}
\end{equation}
In Section \ref{sec:path2groupoid} we review the path 2-groupoid: it is like the path groupoid but with additional 2-morphisms, which are essentially fixed-end homotopies between paths.

For example, if $T$ is the 2-category of algebras (over some fixed field), bimodules, and intertwiners, a 2-functor \erf{eq:2functors} provides for each point $x\in X$ an algebra $F(x)$, which is supposed to be the fibre of the gerbe at the point $x$. Further, it provides for each path $\gamma$ from $x$ to $y$ a $F(x)$-$F(y)$-bimodule $F(\gamma)$, which is supposed to be the parallel transport of the connection on that gerbe along the curve parameterized by  $\gamma$. Finally, it provides for each homotopy  $\Sigma$  between paths $\gamma$ and $\gamma'$ an intertwiner 
\begin{equation*}
F(\Sigma): F(\gamma) \to  F(\gamma')\text{,}
\end{equation*}
which is supposed to be the parallel transport of the connection on that gerbe along the surface parameterized by $\Sigma$. 
The axioms of the 2-functor \erf{eq:2functors}  describe how these parallel transport structures are compatible with the composition of paths and gluing of homotopies.
These axioms and all other 2-categorical structure we use can be looked up in \cite[Appendix A]{schreiber6}.

Apart from the evident generalization from functors to 2-functors, more work has to be  invested into the generalization of the smoothness conditions. Imposing smoothness conditions relies on a notion of \emph{local triviality}
for 2-functors defined on path 2-groupoids. A 2-functor
\begin{equation*}
F: \mathcal{P}_2(X) \to T
\end{equation*}
is considered to be trivializable, if it factors through a prescribed 2-functor $i: \mathrm{Gr} \to T$, with $\mathrm{Gr}$ a strict Lie 2-groupoid. The  Lie 2-groupoid $\mathrm{Gr}$ plays the role of the \quot{typical fibre}, and the 2-functor $i$ indicates how the typical fibre is realized in the target 2-category $T$. A local trivialization of the 2-functor $F$
is a cover of $X$ by open sets $U_{\alpha}$,  a collection of locally defined \quot{trivial} 2-functors $\mathrm{triv}_{\alpha}: \mathcal{P}_2(U_{\alpha}) \to \mathrm{Gr}$ and of equivalences
\begin{equation*}
\alxydim{}{t_{\alpha}: F|_{U_{\alpha}} \ar[r]^-{\cong} & i \circ \mathrm{triv}_{\alpha}}
\end{equation*}
between 2-functors defined on $U_{\alpha}$. Local trivializations lead to \emph{descend data}, generalizing the transition functions of a bundle.
The descent data of a 2-functor with a local trivialization consists of the 2-functors $\mathrm{triv}_{\alpha}$, of transformations
\begin{equation*}
g_{\alpha\beta}: i \circ \mathrm{triv}_{\alpha} \to i \circ \mathrm{triv}_{\beta}
\end{equation*}
between 2-functors over $U_{\alpha} \cap U_{\beta}$,
and of higher coherence data that we shall ignore for the purposes of this introduction.  The theory of local trivializations and descent data for 2-functors is developed in our paper \cite{schreiber6} and reviewed in Section \ref{sec:loctriv}.

The smoothness conditions we want to formulate are imposed with respect to descent data;  they are the content of Section \ref{sec6_3}.  First of all, we require that the 2-functors $\mathrm{triv}_{\alpha}$ are smooth. This makes sense since they take values in the \emph{Lie} 2-groupoid $\mathrm{Gr}$. For certain Lie 2-groupoids, a theory developed in our paper \cite{schreiber5} identifies the smooth functors $\mathrm{triv}_{\alpha}$ with certain 2-forms $B_{\alpha}$ on $U_{\alpha}$ --- the curving of the gerbe connection. In order to treat the transformations $g_{\alpha\beta}$, we apply an observation in abstract 2-category theory: the transformations $g_{\alpha\beta}$ can be regarded as a collection of functors 
\begin{equation*}
\holo(g_{\alpha\beta}): \mathcal{P}_1(U_{\alpha} \cap U_{\beta}) \to \Lambda T\text{,}
\end{equation*}
for $\Lambda T$ a certain category of diagrams in $T$.
The smoothness condition that we impose for the transformation $g_{\alpha\beta}$ is that the functors $\holo(g_{\alpha\beta})$ are transport functors. According to the before-mentioned correspondence between transport functors and fibre bundles with connection, we thus obtain a smooth fibre bundle $\holo(g_{\alpha\beta})$ with connection over two-fold overlaps $U_{\alpha} \cap U_{\beta}$ --- a significant feature of a gerbe.

Summarizing this overview, our axiomatic formulation of connections on gerbes consists of transport 2-functors: 2-functors $F: \mathcal{P}_2(X) \to T$ that are locally trivializable with respect to a typical fibre $i: \mathrm{Gr} \to T$, and have smooth descent data.

In Section \ref{sec4} of this article we test our axiomatic formulation by choosing examples of target 2-categories $T$ and 2-functors $i: \mathrm{Gr} \to T$. In these examples the Lie 2-groupoids are \quot{deloopings} of strict Lie 2-groups, $\mathrm{Gr}=\mathcal{B}\mathfrak{G}$; these Lie 2-groups $\mathfrak{G}$ play the same role for gerbes as Lie groups  for principal bundles. We find the following results:
\begin{enumerate}[(i)]
\item 
For a general Lie 2-group $\mathfrak{G}$ and the identity 2-functor 
\begin{equation*}
i = \id_{\mathcal{B}\mathfrak{G}}: \mathcal{B}\mathfrak{G} \to \mathcal{B}\mathfrak{G}
\end{equation*}
we prove (Theorem \ref{th:class}) that there is a bijection
\begin{equation*}
\hc 0 \mathrm{Trans}_{\mathcal{B}\mathfrak{G}}(X, \mathcal{B}\mathfrak{G})  \cong \hat H^2(X,\mathfrak{G}) 
\end{equation*}
between isomorphism classes of transport 2-functors  and the degree two differential non-abelian cohomology of  $X$ with coefficients in  $\mathfrak{G}$. These cohomology groups have been explored in \cite{breen1} and \cite{baez2}; they are an extension of Giraud's non-abelian cohomology by differential form data. Upon setting $\mathfrak{G}=\mathcal{B}S^1$ it reduces to  Deligne cohomology.

\item
The Lie 2-group $\mathcal{B}S^1$ has a monoidal functor $\mathcal{B}S^1 \to S^1\text{-}\mathrm{Tor}$ to the monoidal category of $S^1$-torsors, by sending the single objects of $\mathcal{B}S^1$ to $S^1$ considered as a torsor over itself. Delooping  yields a 2-functor 
\begin{equation*}
i: \mathcal{BB}S^1 \to \mathcal{B}(S^1\text{-}\mathrm{Tor})\text{.}
\end{equation*}
We prove (Theorem \ref{th:bgrb})  that there is an  equivalence of 2-categories
\begin{equation*}
\mathrm{Trans}_{\mathcal{BB}S^1}(X, \mathcal{B}(S^1\text{-}\mathrm{Tor})) \cong \bigset{10em}{$S^1$-bundle gerbes with connection over $X$}\text{.}
\end{equation*}
Bundle gerbes have been introduced by Murray \cite{murray}.
The equivalence arises  by realizing that the transport functor $\holo(g_{\alpha\beta})$ in the descent data  corresponds in the present situation to an $S^1$-bundle with connection.

\item
Let $H$ be a Lie group and let $\mathrm{AUT}(H)$ be the automorphism 2-group of $H$. It has a monoidal functor $\AUT(H) \to \hbitor$ to the monoidal category of $H$-bitorsors. Delooping yields a 2-functor
\begin{equation*}
i: \mathcal{B}\AUT(H) \to \mathcal{B}(\hbitor)\text{.}
\end{equation*}  
We prove (Theorem \ref{th:nagrb}) that there is an  equivalence of 2-categories
\begin{equation*}
\mathrm{Trans}_{\mathcal{B}\mathrm{AUT}(H)}(X,\mathcal{B}(H\text{-}\mathrm{BiTor})) \cong \bigset{13em}{Non-abelian $H$-bundle gerbes with connection over $X$}\text{.}
\end{equation*}
Non-abelian bundle gerbes are a generalization of $S^1$-bundle gerbes introduced in \cite{aschieri}, and the above equivalence arises by proving that the transport functor $\holo(g_{\alpha\beta})$ corresponds in the present situation to a \quot{principal $H$-bibundle with twisted connection}.

\end{enumerate}
The relations (i) to (iii) show that all these existing concepts of gerbes with connection fit into our axiomatic formulation.

Apart from these relations to existing gerbes with connection, transport 2-functors are able to  determine systematically new  concepts  in cases when only the target 2-category $T$ and the 2-group $\mathfrak{G}$ are given. We demonstrate this in Section \ref{sec4_4} with the examples of connections on  vector 2-bundles, string 2-bundles, and  principal 2-bundles.

Finally,  we  discuss in Section \ref{sec5} the notion of parallel transport along surfaces,  which is manifestly included in the concept of a transport 2-functor. We  introduce a notion of surface holonomy for transport 2-functors, defined for closed oriented surfaces with a marking, i.e. a certain presentation of its fundamental group. It is obtained by evaluating the transport 2-functor on a homotopy that realizes the single relation in this presentation.

The existing notion of surface holonomy for \emph{abelian} gerbes  takes values in  $S^1$ \cite{gawedzki3,murray}, while our notion of surface holonomy takes values in the 2-morphisms of the target 2-category $T$. In order to compare the two notions, we propose a \quot{reduction} procedure which can be applied in the case that typical fibre of the transport 2-functor is of the form $i: \mathcal{B}\mathfrak{G} \to T$, where $\mathfrak{G}$ is a Lie 2-group. The first part of this procedure is the definition of an abelian group $\mathfrak{G}\red$ which can be formed for any Lie 2-group $\mathfrak{G}$ (Definition \ref{def:redgroup}). Heuristically, it generalizes the abelianization of an ordinary Lie group. As the second part of the reduction procedure, we show (Proposition \ref{prop:reduction}) that the surface holonomy of every transport 2-functor with $\mathcal{B}\mathfrak{G}$-structure can consistently be reduced to a function with values in $\mathfrak{G}\red$.

Our main results in Section \ref{sec5} concern this reduced  surface holonomy of transport 2-functors with $\mathcal{B}\mathfrak{G}$-structure. We prove in Theorem \ref{th:rhol} a  rigidity result for reduced surface holonomy, namely that it  depends only on the isomorphism class of the transport 2-functor, and only on the equivalence class of the marking. The isomorphism invariance allows us to transfer the reduced surface holonomy from transport 2-functors through the equivalences (i), (ii), and (iii) described above. In particular, we equip non-abelian $\mathfrak{G}$-gerbes with a well-defined notion of a $\mathfrak{G}\red$-valued surface holonomy; such a concept was not known before.

Finally, we show that our new concept of reduced surface holonomy is compatible with the existing notion of $S^1$-valued surface holonomy of abelian gerbes. Namely, in the case  $\mathfrak{G}=\mathcal{B}S^1$ we  find  for the reduction  $(\mathcal{B}S^1)\red = S^1$, so that both concepts take values in the same set. We prove then
in Proposition \ref{propabsf} that the two concepts indeed coincide.  Thus, our new notion of (reduced) surface holonomy consistently extends the existing notion from abelian to non-abelian gerbes.

\paragraph{Acknowledgements.} 
The project described here has some of
its roots in ideas of John Baez and in his joint work with  US, and we are grateful for all discussions and suggestions.
We are also  grateful for opportunities to give talks about this project at an unfinished state, namely at the Fields Institute, at the VBAC 2007 meeting in Bad Honnef, at the MedILS in Split and at the NTNU in Trondheim. In addition, we thank the Hausdorff Research Institute for Mathematics in Bonn for kind hospitality and support during several visits.

%&latex

\setsecnumdepth{2}

\section{Foundations of the Transport Functor Formalism}

\label{sec1}

The present paper is the last part of a project carried out in a sequence of papers \cite{schreiber3,schreiber5,schreiber6}. In these papers, we have prepared the foundations for transport 2-functors -- our axiomatic formulation of connection on non-abelian gerbes. The purpose of this section is to make the present paper self-contained; we collect and review the most important definitions and results from the previous papers.  

\subsection{The Path 2-Groupoid of a Smooth Manifold}

\label{sec:path2groupoid}

The basic idea of the path 2-groupoid is very simple: for a smooth manifold $X$, it is a strict 2-category whose objects are the points of $X$, whose 1-morphisms are smooth  paths in~$X$, and whose 2-morphisms are smooth homotopies between these paths. We recall some definitions from \cite{schreiber3,schreiber5}.

For points $x,y\in X$, a \emph{path} $\gamma:x \to y$ is a smooth map $\gamma:[0,1] \to X$ with $\gamma(0)=x$ and $\gamma(1)=y$. Since the composition $\gamma_2 \circ \gamma_1$ of two  paths $\gamma_1:x \to y$ and $\gamma_2:y \to z$  should again be a smooth map we require sitting instants for all paths: a number $0 < \varepsilon <\frac{1}{2}$ with
$\gamma(t)=\gamma(0)$ for $0 \leq t < \varepsilon$ and $\gamma(t)=\gamma(1)$ for $1-\varepsilon
< t \leq 1$. The set of these paths is denoted by $PX$. In order to make the composition associative and to make paths invertible, we  consider the following equivalence relation on $PX$: two paths $\gamma,\gamma':x \to y$ are called \emph{thin homotopy equi\-va\-lent}
if there exists a smooth map
$h:[0,1]^2 \to X$ 
such that 
\begin{enumerate}
\item[(1)]
$h$ is a homotopy from $\gamma$ to $\gamma'$ through paths $x \to y$ and has sitting instants at $\gamma$ and $\gamma'$.

\item[(2)]
the differential of $h$ has at most rank 1. 
\end{enumerate}
The set of equivalence classes is denoted by $P^1X$.
We remark that any path $\gamma$ is thin homotopy equivalent to any orientation-preserving reparameterization of $\gamma$.
The composition of paths induces a well-defined associative composition on $P^1X$ for which the constant paths $\id_x$ are identities and the reversed paths $\gamma^{-1}$ are inverses; see \cite[Section 2.1]{schreiber3} for more details.

A homotopy $h$ between two paths $\gamma_0$ and $\gamma_1$ like above but without condition (2) on the rank of its differential is called a \emph{bigon} in $X$ and denoted by $\Sigma: \gamma_0 \Rightarrow \gamma_1$. These bigons  form the 2-morphisms of the path 2-groupoid of $X$.
We denote the set
of  bigons in $X$ by $BX$. Bigons can be composed in two natural ways. For two bigons $\Sigma:\gamma_1
\Rightarrow \gamma_2$ and $\Sigma':\gamma_2 \Rightarrow \gamma_3$ 
we have a \emph{vertical composition} 
\begin{equation*}
\Sigma'\bullet\Sigma: \gamma_1 \Rightarrow \gamma_3\text{.}
\end{equation*}
If two bigons $\Sigma_1: \gamma_1 \Rightarrow \gamma_1'$ and $\Sigma_2:\gamma_2 \Rightarrow \gamma_2'$ are such  that $\gamma_1(1)=\gamma_2(0)$, we have a \emph{horizontal composition }
\begin{equation*}
\Sigma_2 \circ \Sigma_1: \gamma_2 \circ \gamma_1 \Rightarrow \gamma_2' \circ \gamma_1'\text{.}
\end{equation*}
Like in the case of paths, we  consider an equivalence relation on $BX$ in order to make the two compositions associative and to make bigons invertible:
two bigons $\Sigma:\gamma_0 \Rightarrow \gamma_1$ and $\Sigma':\gamma'_0 \Rightarrow \gamma'_1$
are called \emph{thin ho\-mo\-to\-py equivalent} if there exists a smooth
map $h: [0,1]^{3} \to X$ such that
\begin{enumerate}
\item[(1)] 
$h$ is a homotopy from $\Sigma$ to $\Sigma'$ through bigons and has sitting instants at $\Sigma$ and $\Sigma'$.

\item[(2)]
the induced  homotopies $\gamma_0 \Rightarrow \gamma_0'$ and $\gamma_1\Rightarrow \gamma_1'$ are thin.

\item[(3)]
the differential of $h$ has at most rank 2.
\end{enumerate}
Condition (1) assures that we have defined an equivalence
relation on $BX$, and  condition (2) asserts that two thin homotopy equivalent bigons $\Sigma:\gamma_0 \Rightarrow \gamma_1$ and $\Sigma'\maps\gamma_0' \Rightarrow \gamma_1'$ start and end on thin homotopy equivalent paths
$\gamma_0 \sim \gamma_0'$ and $\gamma_1 \sim \gamma_1'$. We denote the set of equivalence classes by $B^2X$.
The two compositions $\circ$ and $\bullet$ between bigons  induce a well-defined composition on $B^2X$. The \emph{path 2-groupoid $\mathcal{P}_2(X)$} is  the 2-category whose set of objects is $X$, whose set of 1-morphisms is $P^1X$ and whose set of 2-morphisms is $B^2X$. The path 2-groupoid is strict and all 1-morphisms are strictly invertible. We refer the reader to \cite[Section 2.1]{schreiber5} for a detailed discussion.

In this article we describe connections on gerbes by transport 2-functors --  certain (not necessarily strict) 2-functors
\begin{equation*}
F: \mathcal{P}_2(X) \to T\text{,}
\end{equation*}
for some 2-category $T$,  the \emph{target 2-category}. 
We note that 2-functors can be pulled back along smooth maps $f:M \to X$: such a map induces  a strict 2-functor $f_{*}\maps\mathcal{P}_2(M) \to \mathcal{P}_2(X)$, and we write
\begin{equation*}
f^{*}F := F \circ f_{*}\text{.}
\end{equation*}

If we drop condition (3) from the definition of thin homotopy equivalence between bigons we would still get a strict 2-groupoid, which we denote by $\Pi_2(X)$ and  which we call the \emph{fundamental 2-groupoid} of $X$. The projection defines a strict 2-functor $\mathcal{P}_2(X) \to \Pi_2(X)$. We say that a 2-functor $F\maps\mathcal{P}_2(M) \to T$ is \emph{flat} if it factors through the 2-functor $\mathcal{P}_2(M) \to \Pi_2(M)$. We  show in Section  \ref{sec5_1} that this abstract notion of flatness is equivalent to the vanishing of a certain curvature 3-form.

\subsection{Local Trivializations and Descent Data}

\label{sec1_2}
\label{sec:loctriv}

Let $T$ be a 2-category. 
A key feature of a transport 2-functor is that it is locally trivializable. Local trivializations
of a   2-functor $F: \mathcal{P}_2(M)
\to T$ are defined with respect to three attributes:
\begin{enumerate}
\item
A  strict 
2-groupoid $\mathrm{Gr}$, the \emph{structure 2-groupoid}. In Section \ref{sec3} we will require that $\mathrm{Gr}$ is a \emph{Lie} 2-groupoid, and formulate smoothness conditions with respect to its smooth structure. 

\item
A   2-functor $i:\mathrm{Gr} \to T$ that indicates how the structure 2-groupoid is realized in the target 2-category.  

\item 
A surjective
submersion $\pi:Y \to M$, which serves as an \quot{open cover} of the base manifold $M$.
\end{enumerate}
For a surjective submersion $\pi:Y \to M$ the fibre products $Y^{[k]}\nobr := \nobr Y \times_M ... \times_M Y$ are again smooth manifolds in such a way that the  projections
$\pi_{i_1...i_p}\maps Y^{[k]} \to Y^{[p]}$
(to the indexed factors) are smooth maps. An example is an open cover $\mathfrak{U}=\left \lbrace U_\alpha \right \rbrace$ of $M$, for which the disjoint union of all open sets $U_\alpha$ together with the projection to $M$ is a surjective submersion. In this example, the $k$-fold fibre product is the disjoint union of the $k$-fold intersections of the open sets $U_{\alpha}$. 

\begin{definition}%[{\cite[Definition 2.1.1]{schreiber6}}]
\label{sec2_def5}
A \emph{$\pi$-local $i$-trivialization} of a 2-functor 
$F\maps \mathcal{P}_2(M)
\to T$
is a pair $(\mathrm{triv},t)$ of a strict 2-functor 
\begin{equation*}
\mathrm{triv}: \mathcal{P}_2(Y) \to \mathrm{Gr}
\end{equation*}
and
a \pe
\begin{equation*}
\quadrat{\mathcal{P}_2(Y)}{\mathcal{P}_2(M)}{\mathrm{Gr}}{T\text{.}}{\pi_{*}}{\mathrm{triv}}{F}{i}{t}
\end{equation*}
\end{definition}

For the notion of a pseudonatural equivalence we refer to \cite[Appendix A]{schreiber6}. According to the conventions we fixed there, it \emph{includes} a weak inverse $\bar t$ together with modifications
\begin{equation}
\label{7}
i_t :  \bar t \circ t \Rightarrow \id_{\pi^{*}F}
\quad\text{ and }\quad
j_t :  \id_{\mathrm{triv}_i} \Rightarrow t \circ \bar t
\end{equation}
satisfying the so-called zigzag identities.

In the following we use the abbreviation $\mathrm{triv}_i := i \circ \mathrm{triv}$, and we write $\loctrivfunct{i}{\mathrm{Gr}}{2}$ for the 2-category of 2-functors $F: \mathcal{P}_2(M) \to T$ with $\pi$-local $i$-trivializations (together with all pseudonatural transformations and all modifications).
Next we come to the definition of a 2-category $\trans{i}{\pi}{2}$ of \emph{descent data} with respect to a surjective submersion $\pi:Y \to M$ and a structure 2-groupoid $i:\mathrm{Gr} \to T$. 
\begin{definition}%[{\cite[Definition 2.2.2]{schreiber6}}]
\label{def:descendobject}
A \uline{descent object}  is a  quadruple $(\mathrm{triv},g,\psi,f)$  consisting of a strict  2-functor
\begin{equation*}
\mathrm{triv}\maps\mathcal{P}_2(Y) \to \mathrm{Gr}\text{,}
\end{equation*}
a \pe\
\begin{equation*}
g\maps \pi_1^{*}\mathrm{triv}_i \to \pi_2^{*}\mathrm{triv}_i\text{,}
\end{equation*}
and invertible, coherent modifications 
\begin{equation*}
\psi: \id_{\mathrm{triv}_i} \Rightarrow \Delta^{*}g
\quand 
f\maps \pi_{23}^{*}g \circ \pi_{12}^{*}g \Rightarrow \pi_{13}^{*}g\text{.}
\end{equation*}
\end{definition}
\begin{comment}
In these diagrams, $r$, $l$ and $a$ are the right and left \unifier s and the associator of the 2-category $T$, $\Delta:Y \to Y^{[2]}$ is the diagonal map, and $\Delta_{112},\Delta_{122}:Y^{[2]} \to Y^{[3]}$ are the maps duplicating the first or the second factor, respectively. \end{comment}

The coherence conditions for the modifications $\psi$ and $f$ can be found in \cite[Definition 2.2.1]{schreiber6}.

Let us briefly rephrase the above definition in case that $Y$ is the union of open sets $U_{\alpha}$: first there are  strict 2-functors $\mathrm{triv}_{\alpha}:\mathcal{P}_2(U_{\alpha}) \to \mathrm{Gr}$. To compare the difference between $\mathrm{triv}_{\alpha}$ and $\mathrm{triv}_{\beta}$ on a two-fold intersection $U_{\alpha}\cap U_{\beta}$ there are \pe s $g_{\alpha\beta}: (\mathrm{triv}_{\alpha})_i \to (\mathrm{triv}_{\beta})_i$. If we assume for a moment that $g_{\alpha\beta}$ was the transition function of some fibre bundle, one would demand that $1=g_{\alpha\alpha}$ on every $U_{\alpha}$ and that $g_{\beta\gamma}g_{\alpha\beta}=g_{\alpha\gamma}$ on every three-fold intersection $U_{\alpha}\cap U_{\beta}\cap U_{\gamma}$. In the present situation, however, these equalities have been replaced by modifications: the first one by a modification $\psi_{\alpha}: \id_{(\mathrm{triv}_{\alpha})_i} \Rightarrow g_{\alpha\alpha}$ and the second one by a modification $f_{\alpha\beta\gamma}: g_{\beta\gamma} \circ g_{\alpha\beta} \Rightarrow g_{\alpha\gamma}$.

Next we describe how to extract a descend object from a local trivialization of a 2-functor following \cite[Section 2.3]{schreiber6}. Let $F:\mathcal{P}_2(M) \to
T$ be a  2-functor with a $\pi$-local $i$-trivialization $(\mathrm{triv},t)$.
Using the weak inverse $\bar t: \mathrm{triv}_i
\to \pi^{*}F$ of $t$ we define
\begin{equation*}
g:= \pi_2^{*}t \circ \pi_1^{*}\bar t : \pi_1^{*}\mathrm{triv_i} \to \pi_2^{*}\mathrm{triv}_i\text{.}
\end{equation*}
This composition is well-defined since $\pi_1^{*}\pi^{*}F=\pi_2^{*}\pi^{*}F$. Let $i_t$ and$j_t$ be the modifications \erf{7}. We obtain $\Delta^{*}g= t \circ \bar t$, so that the definition $\psi:= j_t$ yields the invertible modification
$\psi:\id_{\mathrm{triv}_i} \Rightarrow \Delta^{*}g$.
Similarly, one defines with $i_{t}$ the invertible modification $f$.
The quadruple $(\mathrm{triv},g,\psi,f)$ obtained like this is a descend object in the sense of Definition \ref{def:descendobject}; see \cite[Lemma 2.3.1]{schreiber6}.

Next suppose $(\mathrm{triv},g,\psi,f)$ and $(\mathrm{triv}',g',\psi',f')$ are descent objects. A \emph{descent 1-morphism} $(\mathrm{triv},g,\psi ,f) \to (\mathrm{triv}',g',\psi ',f')$
  is a pair $(h,\varepsilon)$  consisting of a 
  \pt
\begin{equation*}
h: \mathrm{triv}_i \to \mathrm{triv}_i'
\end{equation*}
and  an invertible    modification
\begin{equation*}
\epsilon: \pi_2^{*}h \circ g \Rightarrow g' \circ \pi_1^{*}h
\end{equation*}
satisfying two natural coherence conditions; see \cite[Definition 2.2.2]{schreiber6}.
\begin{comment}
These are the commutativity of the diagrams
\begin{equation}
\label{5}
\alxydim{@C=2cm}{\pi_{23}^{*}g' \circ (\pi_{2}^{*}h \circ \pi_{12}^{*}g) \ar@{=>}[r]^{a} \ar@{=>}[d]_{\id \circ \pi_{12}^{*}\epsilon} & (\pi_{23}^{*}g' \circ \pi_{2}^{*}h) \circ \pi_{12}^{*}g \ar@{=>}[d]^{\pi_{23}^{*}\epsilon^{-1} \circ \id}   \ar@{=>}[d]\\\pi_{23}^{*}g' \circ (\pi_{12}^{*}g' \circ \pi_1^{*}h)  \ar@{=>}[d]_{a^{-1}} & (\pi_{3}^{*}h \circ \pi_{23}^{*}g) \circ \pi_{12}^{*}g \ar@{=>}[d]^{a}   \ar@{=>}[d] \\(\pi_{23}^{*}g' \circ \pi_{12}^{*}g') \circ \pi_1^{*}h\ar@{=>}[d]_{f' \circ \id} &\pi_{3}^{*}h \circ (\pi_{23}^{*}g \circ \pi_{12}^{*}g)\ar@{=>}[d]^{\id\circ f} \\ \pi_{13}^{*}g'\circ \pi_1^{*}h \ar@{=>}[r]_{\pi_{13}^{*}\epsilon} & \pi_{3}^{*}h \circ \pi_{13}^{*}g\text{.}}
\end{equation}
and
\begin{equation}
\label{4}
\alxydim{}{\id_{\mathrm{triv}'_i} \circ h \ar@{=>}[r]^-{l_{h}} \ar@{=>}[d]_{\psi' \circ \id_h} & h \ar@{=>}[r]^-{r_{h}^{-1}} & h \circ \id_{\mathrm{triv}_i} \ar@{=>}[d]^{\id_h \circ \psi} \\ \Delta^{*}g' \circ h \ar@{=>}[rr]_{\Delta^{*}\epsilon} && h \circ \Delta^{*}g\text{.}}
\end{equation}
\end{comment}
Finally, we suppose that $(h_1,\varepsilon_1)$ and $(h_2,\varepsilon_2)$ are descent 1-morphisms from a descent object $(\mathrm{triv},g,\psi,f)$ to another descent object $(\mathrm{triv}',g',\psi',f')$. A \emph{descent 2-morphism} $(h_1,\varepsilon_1) \Rightarrow (h_2,\varepsilon_2)$ is a modification 
\begin{equation*}
E: h_1 \Rightarrow h_2
\end{equation*}
satisfying another coherence condition; see \cite[Definition 2.2.3]{schreiber6}.
\begin{comment}
Namely, the diagram
\begin{equation}
\label{6}
\alxydim{@=1.2cm}{g' \circ \pi_1^{*}h_1 \ar@{=>}[d]_{\id \circ \pi_1^{*}E}  \ar@{=>}[r]^{\varepsilon_{1}} & \pi_2^{*}h_1 \circ g \ar@{=>}[d]^{\pi_2^{*}E \circ \id } \\ g' \circ \pi_1^{*}h_2 \ar@{=>}[r]_{\varepsilon_2} & \pi_2^{*}h_2 \circ g\text{.}}
\end{equation}
\end{comment}

Descent objects, 1-morphisms and 2-morphisms form a  2-category $\trans{i}{\pi}{2}$, called the \emph{descent 2-category}. In concrete examples of the target 2-category $T$ these structures have natural interpretations in terms of smooth maps and differential forms,  as we show in Section \ref{sec4}. The extraction of a descent object from a local trivialization  outlined above extends to a 2-functor
\begin{equation}
\label{eq:expi}
\ex{\pi}:\loctrivfunct{i}{\mathrm{Gr}}{2} \to \trans{i}{\mathrm{Gr}}{2}\text{,}
\end{equation}
which we have described in \cite[Section 2.3]{schreiber6}.

\label{sec:locglob}

In order to avoid the dependence to the fixed surjective submersion $\pi:Y \to M$, we have shown in \cite[Section 4.2]{schreiber6} that the two 2-categories $\loctrivfunct{i}{\mathrm{Gr}}{2}$ and $\trans{i}{\mathrm{Gr}}{2}$ form a direct system for refinements of surjective submersions over $M$. The corresponding direct limits are 2-categories
\begin{equation*}
\mathrm{Triv}^2(i)_{M} := \lim_{\overrightarrow{\pi}} \mathrm{Triv}^2_{\pi}(i)
\quand
\mathfrak{Des}^2(i)_{M} := \lim_{\overrightarrow{\pi}} \mathfrak{Des}^2_{\pi}(i)\text{.}
\end{equation*}
For instance, an object in the direct limit is a pair of a surjective submersion $\pi$ and an object in the corresponding  2-category $\mathrm{Triv}^2_{\pi}(i)$ or $\mathfrak{Des}^2_{\pi}(i)$. 1-morphisms and 2-morphisms are defined over common refinements.
The 2-functor $\ex{\pi}$ from \erf{eq:expi} induces an equivalence
\begin{equation*}
\mathrm{Triv}^2(i)_{M} \cong \mathfrak{Des}^2(i)_{M}
\end{equation*}
between these two direct limit 2-categories \cite[Proposition 4.2.1]{schreiber6}.

Finally, we want to get rid of the chosen trivializations that are attached to the objects of $\mathrm{Triv}^2(i)_{M}$. We denote by $\mathrm{Funct}_i(\mathcal{P}_2(M),T)$ the 2-category of locally $i$-trivializable 2-functors, i.e. 2-functors which \emph{admit} a $\pi$-local $i$-trivialization, for \emph{some} surjective submersion $\pi$. We have shown \cite[Theorem 4.3.1]{schreiber6}:
\begin{theorem}
\label{th:locglobequiv}
There is an equivalence
\begin{equation*}
\mathrm{Funct}_i(\mathcal{P}_2(M),T) \cong \mathfrak{Des}^2(i)_{M} 
\end{equation*}
between 2-categories of locally $i$-trivializable 2-functors and their descend data.
\end{theorem}

In Section \ref{sec6_3} we  select a sub-2-category of $\mathfrak{Des}^2(i)_{M}$ consisting of \emph{smooth descend data}. The corresponding sub-2-category of $\mathrm{Funct}_i(\mathcal{P}_2(M),T)$ is the one we are aiming at -- the 2-category of transport 2-functors. 

\subsection{Smooth 2-Functors}

\label{sec3}
\label{sec3_1}

This section and the forthcoming Section \ref{sec:smoothnessconditions} prepare two tools we need in Section \ref{sec:smoothdescent}  in order to specify the sub-2-category of smooth descend data. The first tool is the concept  of \emph{smooth 2-functors}.

The general idea behind \quot{smooth functors} is to consider them internal to smooth manifolds. That is, the sets of objects and morphisms of the involved categories are smooth manifolds, and a smooth functor consists of a smooth map between the objects and a smooth map between the morphisms. Categories internal to smooth manifolds are called \emph{Lie categories}, internal groupoids are called \emph{Lie groupoids}. The same idea applies to 2-functors between \emph{2}-categories.

In the context of the present paper, we want to consider smooth  2-functors defined on the path 2-groupoid $\mathcal{P}_2(X)$ of a smooth manifold $X$,  respectively. However, $\mathcal{P}_2(X)$ is \emph{not} internal to smooth manifolds, not even infinite-dimensional ones. Instead, we consider it internal to a larger category of generalized manifolds, so-called \emph{diffeological spaces} \cite{souriau1}. Diffeological spaces and diffeological maps form a category $D^{\infty}$ that enlarges the category $C^{\infty}$ of smooth manifolds by means of a full and faithful functor
$C^{\infty} \to D^{\infty}$.
For an introduction to diffeological spaces we refer the reader to  \cite{baez6} or  \cite[Appendix A.2]{schreiber3}.

Diffeological spaces admit many constructions that are not possible in the category of smooth manifolds. We need three of them. Firstly, if $X$ and $Y$ are diffeological spaces, the set $D^{\infty}(X,Y)$ of smooth maps from $X$ to $Y$ is again a diffeological space. In particular, the set of smooth maps between smooth manifolds is a diffeological space. Secondly, every subset of a diffeological space is a diffeological space. Thirdly, the quotient of every diffeological space by any equivalence relation is a diffeological space. These constructions are relevant because they show that the set $P^1X$ of thin homotopy classes of paths in $X$  as well as the set $B^{2}X$ of thin homotopy classes of bigons in $X$  are diffeological spaces.
We conclude that the path 2-groupoid $\mathcal{P}_2(X)$ of  a smooth manifold $X$ is internal to diffeological spaces, and we have a corresponding 2-category $\mathrm{Funct}^{\infty}(\mathcal{P}_2(X),S)$ of smooth  2-functors with values in some Lie 2-category $S$.

\subsection{Transport Functors}

\label{sec3_5}
\label{sec:smoothnessconditions}

The second tool we need for Section \ref{sec6_3} is the concept of a transport functor. 
Transport functors are an axiomatic formulation of  connections in fibre bundles -- they are the one-dimensional analogue of transport 2-functors, the axiomatic formulation of connections on non-abelian gerbes we are aiming at in the present article. We have introduced and discussed transport functors in \cite{schreiber3}.

From a general perspective, the definition of a \quot{transport $n$-functor} is supposed to rely on a recursive principle in the sense that it uses transport $(n-1)$-functors.  This is one reason to recall the definition of a transport functor. The other reason is to highlight the analogy between the two definitions, which might be helpful to notice: 

\begin{enumerate}[(a)]
\item 
Instead of the path 2-groupoid $\mathcal{P}_2(X)$, we are looking at the \emph{path groupoid} $\mathcal{P}_1(X)$, obtained by just taking objects and 1-morphisms of $\mathcal{P}_2(X)$. A transport functor is a certain functor
\begin{equation*}
F: \mathcal{P}_1(X) \to T\text{,}
\end{equation*}
for some target category $T$: it assigns objects in $T$ -- the \quot{fibres} -- to the points of $X$, and morphisms in $T$ -- the \quot{parallel transport maps} -- to paths in $X$. 

\item
In order to say which functors are transport functors we need a Lie groupoid $\mathrm{Gr}$ and a functor $i\maps \mathrm{Gr} \to T$.
A \emph{local $i$-trivialization} of $F$ is a surjective submersion $\pi:Y \to X$, a functor $\mathrm{triv}: \mathcal{P}_1(Y) \to \mathrm{Gr}$, and a natural equivalence
\begin{equation*}
\alxydim{}{\mathcal{P}_1(Y) \ar[r]^{\pi_{*}} \ar[d]_{\mathrm{triv}} & \mathcal{P}_1(X) \ar@{=>}[dl]|*+{t} \ar[d]^{F} \\ \mathrm{Gr} \ar[r]_{i} & T\text{.} }
\end{equation*}

\item
Associated to a local trivialization is a \emph{descent object}: it is a pair $(\mathrm{triv},g)$ consisting of the functor $\mathrm{triv}: \mathcal{P}_1(Y) \to \mathrm{Gr}$ and of a natural equivalence
\begin{equation*}
g: \pi_1^{*}\mathrm{triv}_i \to \pi_2^{*}\mathrm{triv}_i
\end{equation*}
satisfying a cocycle condition. 

\end{enumerate}
The final step in the definition of a transport functor is the characterization of \emph{smooth descent data}. 
\begin{enumerate}
\item[(d)]
A descent object $(\mathrm{triv},g)$ is called \emph{smooth}, if the functor \begin{equation*}
\mathrm{triv}: \mathcal{P}_1(X)\to \mathrm{Gr}
\end{equation*}
is smooth, i.e. internal to diffeological spaces, and if the components map 
\begin{equation*}
g: Y^{[2]} \to \mathrm{Mor}(T)
\end{equation*}
of the natural equivalence $g$ is the composition of a \emph{smooth} map $\tilde g: Y^{[2]} \to \mathrm{Mor}(\mathrm{Gr})$ with $i: \mathrm{Gr} \to T$. 

\end{enumerate}
In view of the analogy between  (a) - (c) and  Sections \ref{sec:path2groupoid} and \ref{sec:loctriv}, (d) is the analogue of the forthcoming Section \ref{sec:smoothdescent}. Summarizing, we have:

\begin{definition}[{{\cite[Definition 3.6]{schreiber3}}}]
A \emph{transport functor on $X$ with values in $T$ and with $\mathrm{Gr}$-structure} is a locally $i$-trivializable functor
\begin{equation*}
F: \mathcal{P}_1(X) \to T
\end{equation*}
with smooth descent data. 
\end{definition}

Transport functors form a category which we denote by $\transportX{i}{1}{\mathrm{Gr}}{T}$.
The main result of our paper \cite{schreiber3} is that transport functors are an axiomatic formulation of connection on fibre bundles. 

In order to illustrate that, and since we need this result later several times, we provide the following example.  let $G$ be a Lie group, and let $\gbunX$ be the category of principal $G$-bundles with connection over $X$. Further, we denote by $\mathcal{B}G$ the Lie groupoid with one object and morphisms $G$, by $G\text{-}\mathrm{Tor}$ the category of $G$-torsors, and by $i: \mathcal{B}G \to G\text{-}\mathrm{Tor}$ the functor that sends the single object of $\mathcal{B}G$ to $G$, regarded as a $G$-torsor over itself. Then, we have:

\begin{theorem}[{{\cite[Theorem 5.8]{schreiber3}}}]
\label{th3}
\label{th2}
Let $X$ be a smooth manifold. The assignment
\begin{equation*}
\gbunX \to \transportX{i}{1}{\mathcal{B}G}{G\text{-}\mathrm{Tor}}: (P,\omega) \mapsto F_{P,\omega}
\end{equation*}
defined by
\begin{equation*}
F_{P,\omega}(x) := P_x \quand
F_{P,\omega}(\gamma) := \tau_{\gamma}\text{,}
\end{equation*}
where $x\in X$, $\gamma\in PX$, and $\tau_\gamma$ is the parallel transport of $\omega$ along $\gamma$, establishes a surjective equivalence of categories.
\end{theorem}

\setsecnumdepth{3}

\section{Transport 2-Functors}

\label{sec6_3}

In this section we introduce the central definition of this paper: transport 2-functors. For this purpose, we define in Section \ref{sec:smoothdescent} a 2-category of smooth descent data, based on the notions of smooth 2-functors and transport functors. In Section \ref{sec:def} we define transport 2-functors as those 2-functors that correspond to smooth descent data under the equivalence of Theorem \ref{th:locglobequiv}.  Section \ref{sec:features} describes some basic  properties of transport 2-functors, and in Section \ref{sec5_1} we construct an explicit example.

\subsection{Smooth Descent Data}

\label{sec:smoothdescent}

In this section we select a sub-2-category $\transsmooth{i}{2}{\mathrm{Gr}}$ of \emph{smooth descent data} in the 2-category $\trans{i}{\mathrm{Gr}}{2}$ of descent data described in Section \ref{sec:loctriv}.  If $(\mathrm{triv},g,\psi,f)$ is a descent object, we demand that the strict 2-functor $\mathrm{triv}: \mathcal{P}_2(Y) \to \mathrm{Gr}$ has to be smooth in the sense of Section \ref{sec3_1}, i.e. internal to diffeological spaces. Imposing smoothness conditions for the \pt\ $g$ and the modifications $\psi$ and $f$ is more subtle since they do not take  values  in the Lie 2-category $\mathrm{Gr}$ but in the 2-category $T$ which is not assumed to be a Lie 2-category.

Briefly, we proceed in the following two steps. We explain first how the  \pt
\begin{equation*}
g: \pi_1^{*}\mathrm{triv}_i \to \pi_2^{*}\mathrm{triv}_i
\end{equation*}
can be viewed as a certain functor $\holo(g)$ defined on $\mathcal{P}_1(Y^{[2]})$. Secondly,  we impose the  condition that $\mathscr{F}(g)$ is a transport functor. A little motivation might be the observation that $\holo(g)$  corresponds then (at least in some cases, by Theorem \ref{th3}) to a principal bundle with connection  over $Y^{[2]}$ -- one of the well-known ingredients of a (bundle) gerbe, see Sections \ref{sec3_2} and \ref{sec3_3}.

Let us first explain in general how a \pt\ between two 2-functors can be viewed as a functor. We consider  2-functors $F$ and $G$  between 2-categories $S$ and $T$.
Since a \pt\ $\rho:F\to G$ assigns 1-morphisms in $T$ to objects in $S$ and 2-morphisms in $T$ to 1-morphisms in $S$, the general idea is to construct a category $S_{0,1}$ consisting of objects and 1-morphisms of $S$ and a category $\Lambda T$ consisting of 1-morphisms and 2-morphisms of $T$ such that $\rho$ yields a functor
\begin{equation*}
\holo(\rho): S_{0,1} \to \Lambda T\text{.}
\end{equation*}
We assume that $S$ is strict, so that forgetting its 2-morphisms produces a well-defined category $S_{0,1}$. The construction of  the category $\Lambda T$ is more involved. 

If $T$ is strict, the objects of $\Lambda T$ are the 1-morphisms of $T$. A morphism between objects $f\maps X_f \to Y_f$ and $g:X_g \to Y_g$ is a pair of 1-morphisms $x:X_f \to X_g$ and $y:Y_f \to Y_g$ and a 2-morphism
\begin{equation}
\label{36}
\alxydim{}{X_f \ar[r]^{x} \ar[d]_{f} & X_g \ar@{=>}[dl]|{\varphi} \ar[d]^{g} \\ Y_f \ar[r]_{y} & Y_g\text{.}}
\end{equation}
This gives indeed a category $\Lambda T$, whose composition is defined by putting the diagrams \erf{36} next to each other. Clearly, any strict 2-functor $f:T' \to T$ induces a functor $\Lambda f\maps\Lambda T' \to \Lambda T$. For a more detailed discussion of these constructions we refer the reader to Section 4.2 of \cite{schreiber5}.

Now let $\rho:F \to G$ be a \pt\ between two strict 2-functors from $S$ to $T$. Sending an object $X$ in $S$ to the 1-morphism $\rho(X)$ and sending a 1-morphism $f$ in $S$ to the 2-morphism $\rho(X)$ now yields a functor \begin{equation*}
\holo(\rho): S_{0,1} \to \Lambda T\text{.}
\end{equation*}
It respects the composition due to axiom (T1) for $\rho$ and the identities due to \cite[Lemma A.7]{schreiber6}. Moreover, a modification $\mathcal{A}:\rho_1\Rightarrow \rho_2$ defines a natural transformation $\holo(\mathcal{A})\maps \holo(\rho_1) \Rightarrow \holo(\rho_2)$, so that the result is a functor
\begin{equation}
\label{8}
\holo: \mathrm{Hom}(F, G) \to \mathrm{Funct}(S_{0,1},\Lambda T)
\end{equation} 
between the category of \pt s between $F$ and $G$ and the category of functors from $S_{0,1}$ to $\Lambda T$,
for $S$ and $T$ strict 2-categories and $F$ and $G$ strict 2-functors.

In case that the 2-category $T$ is not strict, the construction of $\Lambda T$ suffers from the fact that the composition is not longer associative.  The situation becomes treatable if one requires the objects $X_{f},Y_f$ and $X_g,Y_g$ and the 1-morphisms $x$ and $y$ in \erf{36} to be contained the image of a strict 2-category $T^{\mathrm{str}}$ under some 2-functor $i:T^{\mathrm{str}} \to T$. \begin{figure}[h]
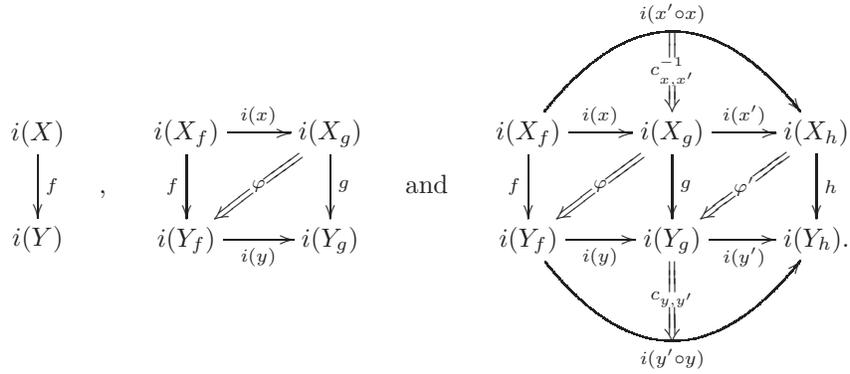

\begin{equation*}
\alxydim{}{i(X) \ar[d]^{f} \\ i(Y)}
\quad\text{, }\quad
\alxydim{}{i(X_f) \ar[r]^{i(x)} \ar[d]_{f} & i(X_g) \ar@{=>}[dl]|{\varphi} \ar[d]^{g} \\ i(Y_f) \ar[r]_{i(y)} & i(Y_g)}
\quad\text{ and }\quad
\alxydim{}{i(X_f) \ar@/^3.2pc/[rr]^{i(x' \circ x)}="1" \ar@{=>}"1";[r]|-{c_{x,x'}^{-1}} \ar[r]^{i(x)} \ar[d]_{f} & i(X_g) \ar@{=>}[dl]|{\varphi} \ar[d]^{g} \ar[r]^{i(x')} & i(X_h) \ar[d]^{h} \ar@{=>}[dl]|{\varphi'} \\ i(Y_f) \ar@/_3.2pc/[rr]_{i(y'\circ y)}="2" \ar@{=>}[r];"2"|{c_{y,y'}} \ar[r]_{i(y)} & i(Y_g) \ar[r]_{i(y')} & i(Y_h)\text{.}}
\end{equation*}
\caption{Objects, morphisms and the composition of the category $\Lambda_iT$ (the diagram on the right hand side ignores the associators and the bracketing of 1-morphisms). Here, $c$ is the compositor of the 2-functor $i$.}
\label{fig1}
\end{figure} The result is a category $\Lambda_iT$, in which the associativity of the composition is restored by axiom (F3) on the compositor of the 2-functor $i$. We omit a more formal definition and refer the reader to Figure \ref{fig1} for an illustration. 
For any 2-functor $f: T \to T'$, a functor
\begin{equation*}
\Lambda F: \Lambda_i T \to \Lambda_{F \circ i} T'
\end{equation*}
is induced by applying $f$ to all involved objects, 1-morphisms, and 2-morphisms.
%This in discussed in detail in Appendix \ref{app_trans}, Definition \ref{app2cat_def3} %and Lemma \ref{app2cat_lem2}.
We may now consider strict 2-functors $F$ and $G$ from $S$ to $T^{\mathrm{str}}$. Then, the functor \erf{8} generalizes straightforwardly to a functor
\begin{equation*}
\holo: \mathrm{Hom}(i \circ F, i \circ G) \to \mathrm{Funct}(S_{0,1},\Lambda_i T)
\end{equation*}
between the category of \pt s\ between $i\circ F$ and $i\circ G$ and the category of functors from $S_{0,1}$ to $\Lambda_i T$. The following lemma follows directly from the definitions. 
%see Proposition \ref{app_trans_prop1}.

\begin{lemma}
\label{lem:F}
The functor $\mathscr{F}$ has the following properties:
\begin{enumerate}[(i)]
\item 
\label{lem:F9}
It is natural with respect to strict 2-functors $f:S' \to S$ in the sense that the diagram
\begin{equation*}
\alxydim{@C=1.5cm}{\mathrm{Hom}(i \circ F,i\circ G) \ar[d]_{f^{*}} \ar[r]^-{\holo} &  \mathrm{Funct}(S_{0,1},\Lambda_i T) \ar[d]^{f^{*}} \\ \mathrm{Hom}(i \circ F \circ f,i\circ G \circ f) \ar[r]_-{\holo} & \mathrm{Funct}(S'_{0,1},\Lambda_i T)}
\end{equation*}
is strictly commutative.

\item
\label{lem:F50}
It preserves the composition of pseudonatural transformations in the sense that if $F,G,H\maps S \to T^{\mathrm{str}}$ are three strict 2-functors, the diagram
\begin{equation*}
\alxydim{@C=1.1cm}{\mathrm{Hom}(i\! \circ\! G,i\!\circ\! H)\! \times \!\mathrm{Hom}(i \!\circ \!F,i\!\circ\! G) \ar[d]_{\circ} \ar[r]^-{\holo \times \holo} & \mathrm{Funct}(S_{0,1},\Lambda_i T) \!\times\! \mathrm{Funct}(S_{0,1},\Lambda_i T) \ar[d]^{\otimes} \\ \mathrm{Hom}(i \circ H,i \circ F) \ar[r]_-{\holo} & \mathrm{Funct}(S_{0,1},\Lambda_i T)}
\end{equation*}
is commutative.

\end{enumerate}
\end{lemma}

In Lemma \ref{lem:F} \erf{lem:F50}
the symbol $\otimes$ has the following meaning. The composition of morphisms in $\Lambda_iT$ was defined by putting the diagrams \erf{36} next to each other as shown in Figure \ref{fig1}. But one can also put the diagrams of appropriate morphisms on top of each other, provided that the arrow on the bottom of the upper one coincides with the arrow on the top of the lower one. This is indeed the case for the morphisms in the image of composable \pt s under $\holo \times \holo$, so that the diagram in (ii) makes sense. In a more formal context, the tensor product $\otimes$ can be discussed in the formalism of \emph{weak double categories}, but we will not stress this point.

In the following discussion the strict 2-category $S$ is the path  2-groupoid of some smooth manifold, $S=\mathcal{P}_2(X)$. Notice that  $S_{0,1}=\mathcal{P}_1(X)$ is then the path groupoid of $X$. The 2-category $T$ is the target 2-category, and the strict 2-category $T^{\mathrm{str}}$ is the Lie 2-groupoid $\mathrm{Gr}$.  

Let $(\mathrm{triv},g,\psi,f)$ be a descent object in the  descent 2-category $\trans{i}{\mathrm{Gr}}{2}$.  The \pt\ $g:\pi_1^{*}\mathrm{triv}_i \to \pi_2^{*}\mathrm{triv}_i$ induces a functor
\begin{equation*}
\holo(g): \mathcal{P}_1(Y^{[2]}) \to \Lambda_i T\text{.}
\end{equation*}
In order to impose the condition that $\mathscr{F}(g)$ is a transport functor, we will use the functor
\begin{equation*}
\Lambda i: \Lambda \mathrm{Gr} \to \Lambda_i T
\end{equation*}
as its structure Lie groupoid. 
Further, the modification $\psi:\id_{\mathrm{triv}_i} \to \Delta^{*}g$ induces via Lemma \ref{lem:F} \erf{lem:F9} a natural transformation
\begin{equation*}
\holo(\psi):\holo(\id_{\mathrm{triv}_i}) \Rightarrow \Delta^{*}\holo(g)\text{.}
\end{equation*}
Finally, the modification $f$ induces via Lemma \ref{lem:F} \erf{lem:F9} and  \erf{lem:F50} a natural transformation
\begin{equation*}
\holo(f): \pi_{23}^{*}\holo(g) \otimes \pi_{12}^{*}\holo(g) \Rightarrow \pi_{13}^{*}\holo(g)\text{.}
\end{equation*}

\begin{definition}
A descent object $(\mathrm{triv},g,\psi,f)$ is called \emph{smooth} if the following  conditions are satisfied:
\begin{enumerate}[(i)]
\item
the 2-functor $\mathrm{triv}\maps\mathcal{P}_2(Y) \to \mathrm{Gr}$ is smooth.

\item
the functor $\holo(g)$ is a transport functor with $\Lambda \mathrm{Gr}$-structure.

\item
the natural transformations $\mathscr{F}(\psi)$ and $\mathscr{F}(f)$ are morphisms between transport functors. 
\end{enumerate}

\end{definition}

In the same way we qualify smooth descent 1-morphisms and descent 2-morphisms. A descent 1-morphism 
\begin{equation*}
(h,\varepsilon):(\mathrm{triv},g,\psi,f) \to (\mathrm{triv}',g',\psi',f')
\end{equation*}
is converted into a functor
\begin{equation*}
\holo(h): \mathcal{P}_1(Y) \to \Lambda_iT
\end{equation*}
and a natural transformation
\begin{equation*}
\holo(\varepsilon): \pi_2^{*}\holo(h) \otimes \holo(g) \Rightarrow \holo(g') \otimes \pi_1^{*}\holo(h)\text{.}
\end{equation*}
We say that $(h,\epsilon)$ is \emph{smooth}, if  $\holo(h)$ is a  transport functor
with $\Lambda\mathrm{Gr}$-structure and   $\holo(\epsilon)$
is a 1-morphism between transport functors.
A descent 2-morphism $E: (h,\varepsilon) \Rightarrow (h',\varepsilon')$ is converted into a natural transformation
\begin{equation*}
\holo(E): \holo(h) \Rightarrow \holo(h')\text{,}
\end{equation*}
and we say that $E$ is \emph{smooth}, if  $\holo(E)$
is a 1-morphism between transport functors. 
We claim two obvious properties of smooth descent data:
\begin{enumerate}[(i)]

\item 
Compositions of smooth descent 1-morphisms and smooth descent 2-morphisms are again smooth, so that smooth descent data forms a sub-2-category of $\trans{i}{\mathrm{Gr}}{2}$, which we denote by $\transsmooth{i}{2}{\mathrm{Gr}}$.

\item
Pullbacks of smooth descend objects, 1-morphisms and 2-morphisms along refinements of surjective submersions are again smooth, so that the direct limit
\begin{equation*}
\transsmoothpi{i}{2}{\mathrm{Gr}}{}_M := \lim_{\overrightarrow{\pi}} \transsmooth{i}{2}{\mathrm{Gr}}
\end{equation*}
is a well-defined sub-2-category of $\mathfrak{Des}^2(i)_M$.

\end{enumerate}
In Section \ref{sec4} we show that the 2-category $\transsmoothpi{i}{2}{\mathrm{Gr}}{}_M$ of smooth descent data becomes nice and familiar upon choosing concrete examples for the structure 2-groupoid $i:\mathrm{Gr} \to T$.

\subsection{Transport 2-Functors}

\label{sec:def}

Now we come to the central definition of this paper.

\begin{definition}
Let $M$ be a smooth manifold, $\mathrm{Gr}$  be a strict Lie 2-groupoid,  $T$  be a 2-category and  $i:\mathrm{Gr} \to T$  be a 2-functor. 
A \emph{transport 2-functor on $M$ with values in $T$ and with $\mathrm{Gr}$-structure} is a 2-functor
\begin{equation*}
\mathrm{tra}:\mathcal{P}_2(M) \to T
\end{equation*}
such that there exists a surjective submersion $\pi:Y \to M$ and a $\pi$-local $i$-trivialization $(\mathrm{triv},t)$ whose  descent object $\ex{\pi}(\mathrm{tra},\mathrm{triv},t)$ is smooth. 
\end{definition}

A 1-morphism between transport 2-functors $\mathrm{tra}$ and $\mathrm{tra}'$ is a \pt\ 
 $A\maps \mathrm{tra} \to \mathrm{tra}'$
such that there exists a surjective submersion $\pi$ together with $\pi$-local $i$-trivializations of $\mathrm{tra}$ and $\mathrm{tra}'$ for which the descent 1-morphism $\ex{\pi}(A)$ is smooth. 2-morphisms are defined in the same way. 
Transport 2-functors $\mathrm{tra}:\mathcal{P}_2(M) \to T$ with $\mathrm{Gr}$-structure, 1-morphisms, and 2-morphisms form a sub-2-category of the 2-category of locally $i$-trivializable 2-functor $\mathrm{Funct}_i(\mathcal{P}_2(M),T)$, and we  denote this sub-2-category by $\transport{}{2}{\mathrm{Gr}}{T}$. We emphasize that being a transport 2-functor is a property, not additional structure. In particular, no surjective submersion or open cover  is contained in the structure of a transport 2-functor: they are manifestly \emph{globally defined} objects. 

We want to establish an equivalence between transport 2-functors and their smooth descent data. In order to achieve this equivalence we have to make a slight assumption on the 2-functor $i$. We call a 2-functor $i:\mathrm{Gr} \to T$ \emph{full and faithful}, if it induces an equivalence on Hom-categories. In particular, $i$ is full and faithful if it is an equivalence of 2-categories, which is in fact true in all examples we are going to discuss.

\begin{theorem}
\label{th:locglobeqsmooth}
\label{th4}
Let $M$ be a smooth manifold, and let $i \maps \mathrm{Gr} \to T$ be a full and faithful 2-functor. Then, the equivalence of Theorem \ref{th:locglobequiv} restricts to an equivalence
\begin{equation*}
\transport{}{2}{\mathrm{Gr}}{T} \cong \mathfrak{Des}^2(i)_M^{\infty}
\end{equation*}
between transport 2-functors and their smooth descent data.
\end{theorem}

Theorem \ref{th:locglobeqsmooth} is proved by the following two lemmata. As an intermediate step we introduce -- for a surjective submersion $\pi$ -- the sub-2-category $\loctrivfunctsmooth{i}{}{\pi}{2}$ of $\loctrivfunct{i}{}{2}$ as the preimage of $\transsmooth{i}{2}{}$ under the 2-functor $\ex{\pi}$. 

\begin{lemma}
\label{lem9}
The 2-functor $\ex{\pi}$ restricts to an equivalence of 2-categories
\begin{equation*}
\loctrivfunctsmooth{i}{}{\pi}{2} \cong \mathfrak{Des}^2_{\pi}(i)^{\infty}\text{.}
\end{equation*}
\end{lemma}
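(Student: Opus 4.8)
The plan is to observe that the full equivalence between $\loctrivfunct{i}{}{2}$ and $\trans{i}{\mathrm{Gr}}{2}$ is already established in Theorem \ref{sec2_th1}, realized by the mutually inverse 2-functors $\ex{\pi}$ and $\mathrm{Rec}_{\pi}$ together with the explicit \pe s $\rho:\ex{\pi} \circ \mathrm{Rec}_{\pi} \to \id$ of Lemma \ref{sec2_lem1} and $\eta:\id \to \mathrm{Rec}_{\pi} \circ \ex{\pi}$ of Lemma \ref{sec2_lem2}. Everything therefore reduces to checking that this whole package restricts to the two smooth sub-2-categories. Recall that compositions of smooth descent cells are again smooth, so $\transsmooth{i}{2}{}$ is a genuine sub-2-category, and $\loctrivfunctsmooth{i}{}{\pi}{2}$ is defined cell-by-cell as exactly those objects, 1-morphisms and 2-morphisms of $\loctrivfunct{i}{}{2}$ on which $\ex{\pi}$ returns smooth descent data. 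By this very definition, $\ex{\pi}$ maps $\loctrivfunctsmooth{i}{}{\pi}{2}$ into $\transsmooth{i}{2}{}$ on the nose. The only real content is thus (i) that $\mathrm{Rec}_{\pi}$ sends smooth descent data back into $\loctrivfunctsmooth{i}{}{\pi}{2}$, and (ii) that the components of $\rho$ and $\eta$ together with their structure modifications are themselves smooth cells, so that the equivalences survive the restriction.

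For (i) I would argue through Lemma \ref{sec2_lem1}: given a smooth descent object $(\mathrm{triv},g,\psi,f)$, the descent data extracted from $\mathrm{Rec}_{\pi}(\mathrm{triv},g,\psi,f)$ is precisely $\ex{\pi}(\mathrm{Rec}_{\pi}(\mathrm{triv},g,\psi,f))$, which by Lemma \ref{sec2_lem1} is equivalent in $\trans{i}{\mathrm{Gr}}{2}$ to the original via the explicit descent 1-morphism $\rho_{(\mathrm{triv},g,\psi,f)}$. Hence $\mathrm{Rec}_{\pi}(\mathrm{triv},g,\psi,f)$ lies in $\loctrivfunctsmooth{i}{}{\pi}{2}$ as soon as smoothness of descent data is invariant under such equivalences, and this invariance is the key lemma I would isolate. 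The same reasoning applies to the hom-categories of $\ex{\pi}$, which are equivalences of categories since $\ex{\pi}$ is an equivalence of 2-categories; this transports smoothness of descent 1- and 2-morphisms across $\mathrm{Rec}_{\pi}$ in the identical fashion.

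The main obstacle is therefore to prove that smoothness of descent data is stable under equivalences in $\trans{i}{\mathrm{Gr}}{2}$. Here I would read off from the proof of Lemma \ref{sec2_lem1} that $\rho_{(\mathrm{triv},g,\psi,f)}$ keeps $\mathrm{triv}$ unchanged and relates the two transition \pe s by the modification $f: g' \Rightarrow g$, which under $\holo$ becomes a natural isomorphism $\holo(g') \cong \holo(g)$ that, by the smoothness hypothesis on $(\mathrm{triv},g,\psi,f)$, is a morphism of transport functors. Since the property of being a transport functor with $\Lambda\mathrm{Gr}$-structure is invariant under natural isomorphism, and the class of morphisms of transport functors is closed under composition with such isomorphisms — both facts belonging to the theory of transport functors of \cite{schreiber3}, transported here along $\holo$ and $\Lambda i:\Lambda\mathrm{Gr} \to \Lambda_i T$ — smoothness of $(\mathrm{triv},g,\psi,f)$ forces smoothness of $(\mathrm{triv},g',\psi',f')$, and conversely. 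The analogous conclusions for $\holo(\psi)$, $\holo(f)$ and for the descent-morphism data follow by the same transport-of-structure argument along $\rho$ and $\eta$.

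Finally, to conclude I would verify directly that $\rho$ and $\eta$ live inside the smooth 2-categories. Their components are the 1-morphisms $\rho_{(\mathrm{triv},g,\psi,f)}$ and $\eta_F$, whose underlying \pt s are identities (respectively built from the local trivialization $t$) and whose modifications are induced from $f$ and from $i_t$; applying $\holo(-)$ to each of these and comparing against the definition of a transport functor and its morphisms shows that the components are smooth cells, using once more the invariance established above. With $\ex{\pi}$ and $\mathrm{Rec}_{\pi}$ restricting to the smooth sub-2-categories, with $\rho$ and $\eta$ restricting accordingly, and with smoothness invariant under these equivalences, the restricted 2-functors form the desired equivalence $\loctrivfunctsmooth{i}{}{\pi}{2} \cong \transsmooth{i}{2}{}$.
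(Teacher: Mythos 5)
Your proposal is correct and takes essentially the same route as the paper: $\ex{\pi}$ restricts by the very definition of $\loctrivfunctsmooth{i}{}{\pi}{2}$, the restriction of $\mathrm{Rec}_{\pi}$ is reduced to the explicit \pe\ $\rho$ of Lemma \ref{sec2_lem1}, and the components of the \pe\ $\eta$ of Lemma \ref{sec2_lem2} are seen to be smooth because their descent data is assembled from the smooth data $g$, $\psi$, $f$.

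One caution about your framing. The ``key lemma'' you isolate --- that smoothness of descent data is stable under \emph{arbitrary} equivalences in $\trans{i}{\mathrm{Gr}}{2}$ --- is false in that generality: smoothness of a descent object includes smoothness of the 2-functor $\mathrm{triv}:\mathcal{P}_2(Y)\to\mathrm{Gr}$ itself, and this condition is not invariant under equivalences $(h,\varepsilon)$ whose underlying \pt\ $h$ is not smooth; conjugating a smooth descent object by such an $h$ yields an equivalent descent object whose underlying 2-functor need not be smooth, and no transport-of-structure argument along $\holo$ can repair this, since the transport-functor machinery only governs $g$, $\psi$ and $f$, not $\mathrm{triv}$. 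Your argument nevertheless goes through, for exactly the reason you note in passing: the equivalence $\rho_{(\mathrm{triv},g,\psi,f)}$ from Lemma \ref{sec2_lem1} has the \emph{identity} underlying \pt, so $\mathrm{triv}'=\mathrm{triv}$, and the remaining extracted data $g'$, $\psi'$, $f'$ is related to $g$, $\psi$, $f$ by invertible modifications built from $f$ and $\psi$; invariance of ``being a transport functor'' under natural isomorphism (your cancellation argument) then does the rest, which is in substance the same as the paper's direct inspection of the explicit formulas in that proof. So you should state your invariance lemma only for equivalences whose underlying \pt\ is the identity (or at least itself smooth), not for all equivalences of $\trans{i}{\mathrm{Gr}}{2}$; with that restriction your proof and the paper's coincide.
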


\begin{proof}
It is clear that $\ex\pi$ restricts properly. Recall from \cite[Section 3]{schreiber6}  that inverse to $\ex\pi$ is a \quot{reconstruction} 2-functor $\mathrm{Rec}_{\pi}$. In order to prove that the image of the restriction of $\mathrm{Rec}_{\pi}$ is contained in $\loctrivfunctsmooth{i}{}{\pi}{2}$ we have to check that $\ex{\pi} \circ \mathrm{Rec}_{\pi}$ restricts to an endo-2-functor of $\transsmooth{i}{2}{}$. In the proof of \cite[Lemma 4.1.2]{schreiber6} we have explicitly computed this 2-functor, and by inspection of the corresponding expressions one recognizes its image as smooth descent data.

The second part of the proof is to show that the components of two pseudonatural equivalences  $\rho:\ex{\pi} \circ \mathrm{Rec}_{\pi} \to \id$ and $\eta: \id \to \mathrm{Rec}_{\pi} \circ \ex{\pi}$ that establish the equivalence of \cite[Proposition 4.1.1]{schreiber6} are in $\mathfrak{Des}^2_{\pi}(i)^{\infty}$ and $\loctrivfunctsmooth{i}{}{\pi}{2}$, respectively. For the transformation $\rho$, this is again by inspection of the formulae in the proof of  \cite[Lemma 4.1.2]{schreiber6}. For the transformation $\eta$, we suppose $F$ is a 2-functor with a $\pi$-local $i$-trivialization $(t,\mathrm{triv})$ with smooth descent data $(\mathrm{triv},g,\psi,f)$. We have to prove that the descent 1-morphism $\ex\pi(\eta(F,t,\mathrm{triv}))$ is smooth. Indeed, according to the definition of $\eta$ given in the proof of \cite[Lemma 4.1.3]{schreiber6} it is given by the \pt\ $g$  and a modification composed from the modifications $f$ and $\psi$. The descent object is by assumption smooth, and so is $\eta(F)$. The same argument shows that the component $\eta(A)$ of a \pt\ $A:F \to F'$ with smooth descent data is smooth. 
\end{proof}

Next we go to the direct limit
\begin{equation*}
\loctrivfunctsmooth{i}{}{}{2}_M := \lim_{\overrightarrow{\pi}} \loctrivfunctsmooth{i}{}{\pi}{2}\text{.}
\end{equation*}
The equivalence of Lemma \ref{lem9} induces an equivalence
\begin{equation*}
\mathrm{Ex} : \loctrivfunctsmooth{i}{}{}{2}_M \to \mathfrak{Des}^2_{\pi}(i)^{\infty}
\end{equation*}
in the direct limit. Next we show that the 2-categories $\loctrivfunctsmooth{i}{}{}{2}_M$ and $\transport{}{2}{\mathrm{Gr}}{T}$  are equivalent. We have an evident 2-functor
\begin{equation*}
v^{\infty}:\loctrivfunctsmooth{i}{}{}{2}_M \to \transportX{}{2}{\mathrm{Gr}}{T}
\end{equation*}
induced by forgetting the chosen trivialization. 

\begin{lemma}
Under the assumption that the 2-functor $i$ is full and faithful,
the 2-functor $v^{\infty}$
is an equivalence of 2-categories. 
\end{lemma}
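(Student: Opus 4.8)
The plan is to show that $v^{\infty}$ is essentially surjective on objects and restricts to an equivalence of categories on every Hom-category; by the standard characterization this makes $v^{\infty}$ an equivalence of $2$-categories. Essential surjectivity on objects is immediate -- indeed $v^{\infty}$ is surjective on objects -- since by the very definition of a transport $2$-functor every object $\mathrm{tra}$ of $\transport{}{2}{\mathrm{Gr}}{T}$ admits a surjective submersion $\pi$ and a $\pi$-local $i$-trivialization $(\mathrm{triv},t)$ with smooth descent object, so $(\mathrm{tra},\pi,\mathrm{triv},t)$ is an object of $\loctrivfunctsmooth{i}{}{}{2}_M$ with $v^{\infty}(\mathrm{tra},\pi,\mathrm{triv},t)=\mathrm{tra}$. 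Fixing two source objects and passing to a common refinement (harmless in the direct limit, since the refinement $2$-functors preserve smoothness), I may assume both are trivialized over the same $\pi$, say $a=(\mathrm{tra},\mathrm{triv}_1,t_1)$ and $a'=(\mathrm{tra}',\mathrm{triv}_2,t_2)$, and I must show that the induced functor $\Phi:\mathrm{Hom}(a,a')\to\mathrm{Hom}(\mathrm{tra},\mathrm{tra}')$ is an equivalence.

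Faithfulness and fullness of $\Phi$ (that is, the behaviour on $2$-morphisms) are formal. A $2$-morphism of the source Hom-category is represented by a modification $B$ whose extracted data $\ex{\pi}(B)$ is smooth, and $\Phi$ sends it to the underlying modification $B$; since ``$\ex{\pi}(B)$ smooth'' is a property stable under refinement, $B\mapsto B$ is injective, and every transport modification is by definition smooth for some $\pi$, hence lies in the image after refinement. The entire content is therefore concentrated in essential surjectivity of $\Phi$ on $1$-morphisms, which reduces to the following independence statement, the heart of the proof: \emph{if $i$ is full and faithful, then whether the descent $1$-morphism $\ex{\pi}(A)$ attached to a \pt\ $A:\mathrm{tra}\to\mathrm{tra}'$ is smooth does not depend on the chosen $\pi$-local $i$-trivializations of $\mathrm{tra}$ and $\mathrm{tra}'$ (after a common refinement)}. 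Granting this, a transport transformation $A$ -- smooth with respect to \emph{some} trivializations by definition -- is also smooth with respect to the fixed trivializations of $a$ and $a'$, so $A$ itself underlies a $1$-morphism $a\to a'$ of the source with $\Phi$-image $A$, which yields essential surjectivity on the nose.

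To prove the independence statement I would argue by comparing two trivializations through change-of-trivialization morphisms. Writing $A=\id_{\mathrm{tra}'}\circ A\circ\id_{\mathrm{tra}}$ in $\loctrivfunct{i}{\mathrm{Gr}}{2}$ but decorating the two identities so that the source identity passes from $(\mathrm{triv}_1,t_1)$ to the witnessing trivialization $(\widehat{\mathrm{triv}},\hat t)$ and similarly on the target, functoriality of $\ex{\pi}$ expresses $\ex{\pi}(A)$ over the fixed trivializations as a composite of the smooth $\ex{\pi}(A)$ over the witnessing ones with the two change-of-trivialization descent $1$-morphisms $\ex{\pi}(\id_{\mathrm{tra}})$ and $\ex{\pi}(\id_{\mathrm{tra}'})$. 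Since smooth descent $1$-morphisms are closed under composition, everything reduces to showing that these change-of-trivialization morphisms are smooth.

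The underlying \pt\ of $\ex{\pi}(\id_{\mathrm{tra}})$ is $h:=\hat t\circ\bar t:(\mathrm{triv}_1)_i\to\widehat{\mathrm{triv}}_i$, a \pt\ between two $2$-functors $\mathcal{P}_2(Y)\to T$ that both factor through $i$, and here the hypothesis on $i$ enters decisively. Full-and-faithfulness of $i$ identifies $h$, up to an invertible modification, with the whiskering $i*\tilde h$ of a \pt\ $\tilde h:\mathrm{triv}_1\to\widehat{\mathrm{triv}}$ valued in the Lie $2$-groupoid $\mathrm{Gr}$: the equivalence $\mathrm{Hom}_T(i(x),i(y))\simeq\mathrm{Hom}_{\mathrm{Gr}}(x,y)$ lifts each $1$-morphism component of $h$ and lifts the $2$-morphism components uniquely, and faithfulness forces the lifted data to assemble into a \pt. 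Because $\mathrm{triv}_1$ and $\widehat{\mathrm{triv}}$ are smooth and both carry smooth descent data while $\mathrm{Gr}$ is Lie, this $\tilde h$ is smooth, so $\holo(h)=\Lambda i\circ\holo(\tilde h)$ is a transport functor with $\Lambda\mathrm{Gr}$-structure, and the analogous lifting of the modification part exhibits $\ex{\pi}(\id_{\mathrm{tra}})$ as a morphism of transport functors; the same applies to $\ex{\pi}(\id_{\mathrm{tra}'})$. I expect this last step -- transporting the smoothness of one trivialization's descent data to another's -- to be the main obstacle, and it is exactly the point at which full-and-faithfulness of $i$ is indispensable: without it a change of trivialization need not factor through $\mathrm{Gr}$, and smoothness could genuinely fail to be a trivialization-independent property.
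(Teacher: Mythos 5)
Your overall strategy is sound and, at the decisive point, coincides with the paper's: both arguments come down to showing that the identity \pt\ $\id_{\mathrm{tra}}$ has smooth descent data with respect to \emph{any} two smooth local trivializations (your ``change-of-trivialization'' morphisms are exactly the descent data $(h,\varepsilon)$, $h = t'\circ\bar t$, that the paper's proof analyses). The genuine gap lies in how you establish this. You lift $h$ through the Hom-category equivalences induced by $i$ to a \pt\ $\tilde h$ with values in $\mathrm{Gr}$ and then assert that ``because $\mathrm{triv}_1$ and $\widehat{\mathrm{triv}}$ are smooth \dots\ while $\mathrm{Gr}$ is Lie, this $\tilde h$ is smooth.'' That is a non sequitur. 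Full-and-faithfulness of $i$ produces the components $\tilde h(z)$ and $\tilde h(\gamma)$ only pointwise, by choices of preimages (the 1-morphism lifts are not even unique), and nothing in this abstract lifting controls whether $z \mapsto \tilde h(z)$ is a smooth map into the manifold of 1-morphisms of $\mathrm{Gr}$, or whether $\gamma \mapsto \tilde h(\gamma)$ is diffeological. Smoothness of the two 2-functors being compared never by itself implies smoothness of a transformation between them; establishing this is precisely the analytic content of the lemma, and it is the step you have skipped. You also cannot appeal to results such as Corollary 3.13 of \cite{schreiber3}, which upgrade \emph{transport} functors over contractible spaces to smooth ones, because whether $\holo(h)$ is a transport functor is exactly what is at stake.

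The paper closes this gap with a geometric argument your proposal lacks: choose a refinement $\zeta: Z \to Y \times_M Y'$ of the common refinement whose connected components are contractible, and a contraction $c$, which yields for every point $z$ a path $c_z$ to a basepoint and for every path $\gamma: z_1 \to z_2$ a bigon $c_{\gamma}: \gamma \Rightarrow c_{z_2}^{-1}\circ c_{z_1}$. Axiom (T2) for $h$ applied to $c_{\gamma}$ then gives
\begin{equation*}
h(\gamma) = i \left( (\mathrm{triv}'(c_{\gamma}) \circ \id)^{-1} \bullet \Gamma \bullet (\id \circ \mathrm{triv}(c_{\gamma})) \right)\text{,}
\end{equation*}
where $\Gamma$ is a \emph{single fixed} preimage under $i$ (this is where full-and-faithfulness enters), while $\mathrm{triv}(c_{\gamma})$ and $\mathrm{triv}'(c_{\gamma})$ depend smoothly on $\gamma$ because $\mathrm{triv}$ and $\mathrm{triv}'$ are smooth 2-functors. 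This exhibits the Wilson lines of $\holo(h)$ as smooth, and Theorem 3.12 of \cite{schreiber3} then yields that $\holo(h)$ is a transport functor with $\Lambda\mathrm{Gr}$-structure. A second, smaller omission: for the modification $\varepsilon$ (and for your claim that the 2-morphism level of your Hom-functor is ``formal'') one still needs $\holo(\varepsilon)$ to be a morphism of transport functors with respect to the \emph{fixed} trivializations; this is an issue of change of trivialization, not of refinement, and the paper derives it from fullness of $i$ via the statement that every natural transformation between transport functors with $\mathrm{Gr}$-structure is automatically a morphism of transport functors --- a fact that itself requires an argument.
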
 

\begin{proof}
It is clear that an inverse functor $w^{\infty}$ takes a given transport 2-functor and picks some smooth local trivialization for some surjective submersion $\pi:Y\to M$. It follows immediately that $v^{\infty} \circ w^{\infty}=\id$. It remains to construct a pseudonatural equivalence $\id\cong w^{\infty} \circ v^{\infty}$, i.e. a 1-isomorphism 
\begin{equation*}
A: (\mathrm{tra},\pi,\mathrm{triv},t) \to (\mathrm{tra},\pi',\mathrm{triv}',t')
\end{equation*}
in $\loctrivfunctsmooth{i}{}{}{2}_M$, where the original $\pi$-local trivialization $(\mathrm{triv},t)$ has been forgotten and replaced by a new $\pi'$-local trivialization $(\mathrm{triv}',t')$. But since the 1-morphisms in $\loctrivfunctsmooth{i}{}{}{2}_M$ are just \pt\ between the 2-functors ignoring the trivializations,  we only have to prove that the identity \pt
\begin{equation*}
A:=\id_{\mathrm{tra}}: \mathrm{tra} \to \mathrm{tra}
\end{equation*}
of a transport 2-functor $\mathrm{tra}$ has smooth descent data $(h,\varepsilon)$ with respect to \textit{any} two  trivializations $(\pi,\mathrm{triv},t)$ and $(\pi',\mathrm{triv}',t')$. 

The first step is to choose a refinement $\zeta: Z \to Y \times_M Y'$ of the common refinement of the to surjective submersions. One can choose $Z$ such that is has contractible connected components. If $c: Z \times[0,1] \to Z$ is such a contraction, it defines for each point $z\in Z$ a path $c_z: z \to z_k$ that moves $z$ to the distinguished point $z_k$ to which the component of $Z$ that contains $z$ is contracted. It further defines for each path $\gamma:z_1 \to z_2$ a bigon $c_{\gamma}:\gamma \Rightarrow c_{z_2}^{-1} \circ c_{z_1}$. Axiom (T2) for the pseudonatural transformation 
\begin{equation*}
h:=t' \circ \bar t: \mathrm{triv}_i \to \mathrm{triv}_i'
\end{equation*}
applied to the bigon $c_{\gamma}$ yields the commutative diagram
\begin{equation*}
\alxydim{@C=2.4cm@R=1.3cm}{h(z_2) \circ \mathrm{triv}_i(\gamma) \ar@{=>}[r]^-{h(\gamma)} \ar@{=>}[d]_{\id\circ \mathrm{triv}_i(c_{\gamma})} & \mathrm{triv}_i'(\gamma) \circ h(z_1) \ar@{=>}[d]^{\mathrm{triv}_i'(c_{\gamma}) \circ \id} \\ h(z_2) \circ \mathrm{triv}_i(c_{z_2}^{-1} \circ c_{z_1}) \ar@{=>}[r]_-{h(c_{z_2}^{-1}\circ c_{z_1})} & \mathrm{triv}_i'(c_{z_2}^{-1}\circ c_{z_1}) \circ h(z_1)\text{.}}
\end{equation*}
Notice that the 1-morphisms $h(z_j): \mathrm{triv}_i(z_j) \to \mathrm{triv}_i'(z_j)$ have by assumption preimages $\kappa_j: \mathrm{triv}(z_j) \to \mathrm{triv}'(z_j)$ under $i$ in $\mathrm{Gr}$, and that the 2-morphism $h(c_{z_2}^{-1}\circ c_{z_1})$ also has a preimage $\Gamma$ in $\mathrm{Gr}$. Thus,
\begin{equation*}
h(\gamma) = i \left ( (\mathrm{triv}'(c_{\gamma}) \circ \id)^{-1} \bullet \Gamma \bullet (\id \circ \mathrm{triv}(c_{\gamma})) \right )\text{.}
\end{equation*}
This is nothing but the Wilson line $\mathcal{W}^{\holo(h),\Lambda i}_{z_1,z_2}$ of the functor $\holo(h)$ and it is smooth since $\mathrm{triv}$ and $\mathrm{triv}'$ are smooth 2-functors. Hence, by Theorem 3.12 in \cite{schreiber3}, $\holo(h)$ is a transport functor with $\Lambda \mathrm{Gr}$-structure. 

It remains to prove that the modification  $\varepsilon: \pi_2^{*}h \circ g \Rightarrow g' \circ \pi_1^{*}h$
induces a morphism $\holo(\varepsilon)$ of transport functors. This simply follows from the general fact that under the assumption that the functor $i: \mathrm{Gr} \to T$ is full, every natural transformation $\eta$ between transport functors with $\mathrm{Gr}$-structure is a morphism of transport functors. We have not stated that explicitly in \cite{schreiber3} but it can easily be deduced from the naturality conditions on trivializations $t$ and $t'$ and on $\eta$, evaluated for paths with a fixed starting point. \end{proof}

With Theorem \ref{th4} we have established an equivalence between globally defined transport 2-functors and locally defined smooth descent data. In Section \ref{sec4} we will identify smooth descent data with various models of gerbes with connections. Under this identifications, Theorem \ref{th4} describes the relation between these gerbes with connections and their parallel transport.

\subsection{Some Features of Transport 2-Functors}

\label{sec:features}

In this section we provide several features of transport 2-functors.

\subsubsection{Operations on Transport 2-Functors}

\label{operations}

It is straightforward to see that transport 2-functors allow a list of natural operations.
\begin{enumerate}[(i)]
\item 
\emph{Pullbacks}: Let $f:M \to N$ be a smooth map. The pullback $f^{*}\mathrm{tra}$ of any transport 2-functor on $N$ is a transport 2-functor on $M$.

\item
\emph{Tensor products}:
Let $\otimes: T \times T \to T$ be a monoidal structure on a 2-category $T$. For transport 2-functors $\mathrm{tra}_1,\mathrm{tra}_2:\mathcal{P}_2(M) \to T$ with $\mathrm{Gr}$-structure, the pointwise tensor product $\mathrm{tra}_1 \otimes \mathrm{tra}_2: \mathcal{P}_2(M) \to T$ is again a transport 2-functor with $\mathrm{Gr}$-structure, and makes the 2-category $\transport{}{2}{\mathrm{Gr}}{T}$ a monoidal 2-category.

\item
\emph{Change of the target 2-category}:
Let $T$ and $T'$ be two target 2-categories equipped with 2-functors $i:\mathrm{Gr} \to T$ and $i': \mathrm{Gr} \to T'$,  and let $F:T \to T'$ be a 2-functor together with a \pe
\begin{equation*}
\rho: F \circ i \to i'\text{.}
\end{equation*}
If $\mathrm{tra}:\mathcal{P}_2(M) \to T$ is a transport 2-functor with $\mathrm{Gr}$-structure, $F \circ \mathrm{tra}$ is also a transport 2-functor with $\mathrm{Gr}$-structure. In particular, this is the case for $i':= F \circ i$ and $\rho=\id$.

\item
\emph{Change of the structure 2-groupoid}: Let $\mathrm{tra}:\mathcal{P}_2(M) \to T$ be a transport 2-functor with $\mathrm{Gr}$-structure, for a 2-functor $i:\mathrm{Gr} \to T$ which is a composition
\begin{equation*}
\alxydim{}{\mathrm{Gr} \ar[r]^{F} & \mathrm{Gr}' \ar[r]^{i'} & T}
\end{equation*}
in which $F$ is a smooth 2-functor.
Then, $\mathrm{tra}$ is also a transport 2-functor with $\mathrm{Gr}'$-structure, since for any local $i$-trivialization $(\mathrm{triv},t)$ of $\mathrm{tra}$ we have a local $i'$-trivialization $(F \circ \mathrm{triv},t)$.  

\end{enumerate}

\subsubsection{Structure Lie 2-Groups}

We describe some examples of Lie 2-groupoids and outline the role of the corresponding transport 2-functors. First we recall the following generalization of a Lie group.

\begin{definition}
\label{def4}
A \emph{Lie 2-group} is a strict monoidal Lie category $(\mathfrak{G},\boxtimes,\trivlin)$ together with a smooth functor $inv:\mathfrak{G} \to \mathfrak{G}$ such that \begin{equation*}
X \boxtimes inv(X) = \trivlin = inv(X) \boxtimes X
\quad\text{ and }\quad
f \boxtimes inv(f) = \id_{\trivlin} = inv(f) \boxtimes f
\end{equation*}
for all objects $X$ and all morphisms $f$ in $\mathfrak{G}$.
\end{definition}

The strict monoidal category $(\mathfrak{G},\boxtimes,\trivlin)$ defines a strict 2-category $\mathcal{B}\mathfrak{G}$ with a single object \cite[Example A.2]{schreiber6}. 
The additional functor $inv$ assures that $\mathcal{B}\mathfrak{G}$ is a strict 2-\emph{groupoid}. All our examples in Section \ref{sec4} discuss transport 2-functors with $\mathcal{B}\mathfrak{G}$-structure, for $\mathfrak{G}$ a Lie 2-group.

Lie 2-groups can be obtained from the following structure.

\begin{definition}
\label{def:crossedmodule}
A \emph{smooth crossed module} is a quadruple $(G,H,t,\alpha)$ of Lie groups $G$ and $H$,  of a Lie group homomorphism  $t: H \to G$, and of a smooth left action $\alpha: G \times H \to H$ by Lie group homomorphisms such that
\begin{itemize}
\item[a)]
$t(\alpha(g,h))=gt(h)g^{-1}$ for all $g\in G$ and $h\in H$.
\item[b)] 
$\alpha(t(h),x) = hxh^{-1}$ for all $h,x\in H$.
\end{itemize}
\end{definition}

The construction of a Lie 2-group $\mathfrak{G}=\mathfrak{G}(G,H,t,\alpha)$ from a given smooth crossed module $(G,H,t,\alpha)$ can be found in the Appendix of \cite{schreiber5}. We shall explicitly  describe the corresponding  Lie 2-groupoid $\mathcal{B}\mathfrak{G}$. It has one object denoted  $\ast$ . A 1-morphism is a group element $g\in G$, the identity 1-morphism is the neutral element, and the composition of 1-morphisms is the multiplication, $g_2 \circ g_1 := g_2g_1$.  
The 2-morphisms are pairs $(g,h)\in G \times H$, considered as 2-morphisms
\begin{equation*}
\bigon{\ast}{\ast}{g}{g'}{h}
\end{equation*}
with $g':=t(h)g$.
The vertical composition is
\begin{equation*}
\alxydim{@C=2cm}{\ast \ar[r]|{g'}="2" \ar@/^2.5pc/[r]^{g}="1" \ar@/_2.5pc/[r]_{g''}="3" \ar@{=>}"1";"2"|{h}\ar@{=>}"2";"3"|{h'} & \ast}
 = \bigon{\ast}{\ast}{g}{g''}{h'h}
\end{equation*}
with $g'=t(h)g$ and $g''=t(h')g'=t(h'h)g$, and the horizontal composition is
\begin{equation*}
\alxydim{@C=1.2cm}{\ast \ar@/^1.5pc/[r]^{g_{1}}="1" \ar@/_1.5pc/[r]_{g'_{1}}="2" \ar@{=>}"1";"2"|*+{h_{1}} & \ast \ar@/^1.5pc/[r]^{g_2}="3" \ar@/_1.5pc/[r]_{g'_2}="4" \ar@{=>}"3";"4"|*+{h_2} & \ast} = \alxydim{@C=3cm}{\ast \ar@/^2.5pc/[r]^{g_2g_1}="1" \ar@/_2.5pc/[r]_{g_2'g_1'}="2" \ar@{=>}"1";"2"|*+{h_2\alpha(g_2,h_1)} & \ast\text{.}}
\end{equation*}
Summarizing, one can go from smooth crossed modules to Lie 2-groups, and then to Lie 2-groupoids. 

\begin{example}\
\label{ex:2groups}
\begin{enumerate}[(i)]
\item
Let $A$ be an abelian Lie group. A smooth crossed module is defined by  $G=\lbrace 1 \rbrace$  and $H:= A$. This fixes the maps to $t(a):=1$ and $\alpha(1,a):=a$. Notice that axiom b) is  satisfied because $A$ is abelian. The associated Lie 2-group is denoted by $\mathcal{B} A$. Transport 2-functors with $\mathcal{BB}A$-structure play the role of \emph{abelian gerbes with connection}; see Section \ref{sec3_2}. 

\item
Let $G$ be a Lie group. A smooth crossed module is defined by  $H := G$, $t=\id$ and $\alpha(g,h)\df ghg^{-1}$. The associated Lie 2-group is  denoted by  $\mathcal{E}G$. This notation is devoted to the fact that the geometric realization of the nerve of the category $\mathcal{E}G$ yields the universal $G$-bundle $EG$. Transport 2-functors with $\mathcal{BE}G$-structure arise as the \emph{curvature} of transport functors; see Section \ref{sec5_1}.

\item
\label{ex:2groups:aut}
Let $H$ be a connected Lie group, so that the group of Lie group automorphisms of $H$ is again a Lie group $G:=\mathrm{Aut}(H)$. The definitions  $t(h)(x):=hxh^{-1}$ and $\alpha(\varphi,h):= \varphi(h)$ yield a smooth crossed module whose associated Lie 2-group is denoted by $\mathrm{AUT}(H)$, it is called the  \emph{automorphism 2-group} of $H$. Transport 2-functors with $\mathcal{B}\mathrm{AUT}(H)$-structure play the role \emph{non-abelian gerbes with connection}; see Section \ref{sec3_3}. 

\item
Let 
\begin{equation*}
\alxydim{}{1 \ar[r] & N \ar[r]^-{t} & H \ar[r]^{p} & G \ar[r] & 1}
\end{equation*}
be an  exact sequence of Lie groups, not necessarily central. There is a canonical action $\alpha$ of $H$ on $N$ defined by requiring
\begin{equation*}
t(\alpha(h,n)) = ht(n)h^{-1}\text{.}
\end{equation*}
This defines a smooth crossed module, whose associated Lie 2-group we  denote by $\mathfrak{N}$.  Transport 2-functors with $\mathcal{B}\mathfrak{N}$-structure correspond to (non-abelian) \emph{lifting gerbes}. They generalize the abelian lifting gerbes \cite{brylinski1,murray} for central extensions to arbitrary short exact sequences of Lie groups.  
\end{enumerate}
\end{example}

\subsubsection{Transgression to Loop Spaces}

Transport 2-functors on a smooth manifold $M$ induce interesting structure on the loop space $LM$. This comes from the fact that there is a canonical smooth functor
\begin{equation*}
\ell: \mathcal{P}_1(LM) \to \Lambda \mathcal{P}_2(M)
\end{equation*}
expressing  that a point in $LM$ is just a particular path in $M$, and that a path in $LM$ is just a particular bigon in $M$ \cite[Section 4.2]{schreiber5}. If $\mathrm{tra}: \mathcal{P}_2(M) \to T$ is a transport 2-functor, then the composition of $\ell$ with 
\begin{equation*}
\Lambda\mathrm{tra}: \Lambda \mathcal{P}_2(M) \to \Lambda_{\mathrm{tra}}T
\end{equation*}
yields a functor
\begin{equation*}
\mathscr{T}_{\mathrm{tra}} := \Lambda\mathrm{tra} \circ \ell: \mathcal{P}_1(LM) \to \Lambda_{\mathrm{tra}}T
\end{equation*}
that we call the \emph{transgression of $\mathrm{tra}$ to the loop space}. In order abbreviate the discussion of the functor $\mathscr{T}_{\mathrm{tra}}$ we make three simplifying assumptions:
\begin{enumerate}[(i)]

\item
We restrict our attention to the based loop  space $\Omega M$ (for some fixed base point) and identify $\mathscr{T}_{\mathrm{tra}}$ with its pullback along the embedding $\iota: \Omega M \to LM$.

\item 
We assume that there exists a surjective submersion $\pi:Y \to M$ for which $\mathrm{tra}$ admits smooth local trivializations  and for which $\Omega\pi:\Omega Y \to \Omega M$ is also a surjective submersion. 

\item
We assume that the target 2-category $T$ is strict, so that $\Lambda T$ is the target category of the functor $\mathscr{T}_{\mathrm{tra}}$.

\end{enumerate}

\begin{proposition}
\label{prop:trans}
Let $\mathrm{tra}: \mathcal{P}_2(M) \to T$ be a transport 2-functor with $\mathrm{Gr}$-structure  satisfying (ii) and (iii).
Then,
\begin{equation*}
\mathscr{T}_{\mathrm{tra}}: \mathcal{P}_1(\Omega M) \to \Lambda T
\end{equation*}
is a transport functor with $\Lambda\mathrm{Gr}$-structure.
\end{proposition}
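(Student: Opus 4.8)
The plan is to exhibit for $\mathrm{Tgr}(\mathrm{tra})$ an explicit smooth local trivialization with $\Lambda\mathrm{Gr}$-structure, obtained by transgressing the trivialization of $\mathrm{tra}$, and then to invoke the characterization of transport functors by smooth Wilson lines (Theorem 3.12 in \cite{schreiber3}). By assumption (1) there is a surjective submersion $\pi:Y\to M$ admitting a smooth $\pi$-local $i$-trivialization $(\mathrm{triv},t)$ of $\mathrm{tra}$, with the extra feature that $L\pi:LY\to LM$ is again a surjective submersion. First I would form the pullback $\pi^{L}:W\to\Omega_pM$ of $L\pi$ along the embedding $\iota_p:\Omega_pM\to LM$; since a pullback of a surjective submersion is again one, $\pi^{L}$ serves as a cover of $\Omega_pM$. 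A point of $W$ is a loop $\gamma$ in $Y$ with $\pi\circ\gamma$ based at $p$, and -- this is the observation that makes the argument work -- a point of the two-fold fibre product $W^{[2]}$ is a pair of such loops over a common base loop, i.e.\ exactly a loop in $Y^{[2]}$ projecting to a based loop at $p$. Thus each $W^{[k]}$ is canonically the pullback along $\iota_p$ of the loop space $L(Y^{[k]})$ over $LM$.

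Next I would transgress the trivialization. Applying the transgression functor $\Lambda(-)\circ\ell_Y$ for $Y$ to the smooth 2-functor $\mathrm{triv}:\mathcal{P}_2(Y)\to\mathrm{Gr}$ and restricting to $W$ yields a functor $\mathrm{triv}^{L}:\mathcal{P}_1(W)\to\Lambda\mathrm{Gr}$; since $\mathrm{triv}$ is smooth, $\ell$ is a diffeological functor, and $\Lambda$ preserves smoothness, $\mathrm{triv}^{L}$ is a smooth functor into the Lie groupoid $\Lambda\mathrm{Gr}$ (which is a Lie groupoid precisely because $\mathrm{Gr}$ is a Lie 2-groupoid). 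The structure functor is $\Lambda i:\Lambda\mathrm{Gr}\to\Lambda T$, where assumption (2) guarantees that $\Lambda T$ is the target category of $\mathrm{Tgr}(\mathrm{tra})$. The pseudonatural equivalence $t:\pi^{*}\mathrm{tra}\to\mathrm{triv}_i$ then transgresses, using the naturality of $\ell$ expressed by $\ell_M\circ(L\pi)_{*}=\Lambda(\pi_{*})\circ\ell_Y$, to a natural equivalence $t^{L}:(\pi^{L})^{*}\mathrm{Tgr}(\mathrm{tra})\to\Lambda i\circ\mathrm{triv}^{L}$. This exhibits $(\mathrm{triv}^{L},t^{L})$ as a $\pi^{L}$-local $\Lambda i$-trivialization of $\mathrm{Tgr}(\mathrm{tra})$.

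It then remains to check that the associated descent datum is smooth, i.e.\ that the transition $g^{L}$ on $W^{[2]}$ is a smooth function into the morphisms of $\Lambda\mathrm{Gr}$. Here I would use that the descent object $(\mathrm{triv},g,\psi,f)$ of $\mathrm{tra}$ is smooth, so that $\holo(g):\mathcal{P}_1(Y^{[2]})\to\Lambda_iT$ is a transport functor with $\Lambda\mathrm{Gr}$-structure while $\holo(\psi)$ and $\holo(f)$ are morphisms of transport functors. Under the identification of a point of $W^{[2]}$ with a based loop $\delta$ in $Y^{[2]}$, the component of $g^{L}$ at $\delta$ is precisely the holonomy, i.e.\ the Wilson line, of $\holo(g)$ around $\delta$; applying Theorem 3.12 in \cite{schreiber3} to the transport functor $\holo(g)$ shows that this Wilson line depends smoothly on $\delta$, so $g^{L}$ is smooth, and its cocycle identities are inherited from the coherence data $\psi$ and $f$. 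With a smooth $\pi^{L}$-local $\Lambda i$-trivialization in hand, the smooth-Wilson-line criterion of the same theorem shows directly that $\mathrm{Tgr}(\mathrm{tra})$ is a transport functor with $\Lambda\mathrm{Gr}$-structure.

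I expect the principal obstacle to be this last step: rigorously identifying the transgressed transition $g^{L}$ with the loop holonomy of $\holo(g)$ and extracting its smoothness from the transport-functor property of $\holo(g)$, together with the bookkeeping needed to restrict cleanly to the based loop space $\Omega_pM$ and to confirm that the transgressed pieces assemble into a genuine smooth descent datum rather than merely a local trivialization.
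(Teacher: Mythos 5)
Your proposal is correct and follows essentially the same route as the paper's own proof: both use $L\pi:LY\to LM$ (restricted to based loops) as the cover, take the transgressed trivialization $\ell^{*}\Lambda\mathrm{triv}$ together with $\Lambda t$, identify the transgressed transition datum with $\ell^{*}\Lambda g$, and obtain its smoothness from the smooth Wilson lines of the transport functor $\holo(g)$ via Theorem 3.12 of \cite{schreiber3}. The only differences are presentational (you pull back to $\Omega_pM$ at the start and track the cocycle identities explicitly, while the paper works over $LM$ and factors $g$ through a smooth $g'$ with $g=i(g')$), not mathematical.
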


\begin{proof}
Let $t: \pi^{*}\mathrm{tra} \to \mathrm{triv}_i$ be a $\pi$-local $i$-trivialization of $\mathrm{tra}$ for $\pi$ a surjective submersion satisfying (ii). A local trivialization $\tilde t$ of $\mathscr{T}_{\mathrm{tra}}$ is given by 
\begin{equation*}
\alxydim{}{\mathcal{P}_1(\Omega Y) \ar[d]_{\ell} \ar[r]^{(\Omega \pi)_{*}} & \mathcal{P}_1(\Omega M) \ar[d]^{\ell} \\ \Lambda \mathcal{P}_2(Y) \ar[d]_{\Lambda\mathrm{triv}} \ar[r]|{\pi_{*}} & \Lambda \mathcal{P}_2(M) \ar@{=>}[dl]|{\Lambda t} \ar[d]^{\Lambda\mathrm{tra}} \\ \Lambda \mathrm{Gr} \ar[r]_{\Lambda i} & \Lambda T}
\end{equation*}
in which the upper subdiagram is commutative on the nose. If $g: \pi_1^{*}\mathrm{triv}_i \to \pi_2^{*}\mathrm{triv}_i$ is the \pt\ in the smooth descent object $\ex{\pi}(\mathrm{tra},t,\mathrm{triv})$, and $\tilde g$ is the natural transformation in the descent object $\ex{\pi}(\mathscr{T}_{\mathrm{tra}}, \tilde t, \ell^{*}\Lambda \mathrm{triv})$ associated to the above trivialization, we find $\tilde g = \ell^{*}\Lambda g$. 
Since $\holo(g)$ is a transport 2-functor with $\Lambda\mathrm{Gr}$-structure, it has smooth Wilson lines \cite{schreiber3}: for a fixed point $\alpha\in Y^{[2]}$ there exists a smooth natural transformation $g': \pi_1^{*}\ell^{*}\Lambda\mathrm{triv} \to \pi_2^{*}\ell^{*}\Lambda\mathrm{triv}$ with $g=i(g')$. This shows that $\tilde g$ factors through a smooth natural transformation $\ell^{*}\Lambda g'$, so that $\mathscr{T}_{\mathrm{tra}}$ is a transport functor.
\end{proof}

Having in mind that transport functors correspond to fibre bundles with connection, Proposition \ref{prop:trans} shows that transport 2-functors on a manifold $M$ naturally induce fibre bundles with connection on the loop space $\Omega M$. In general, these are so-called groupoid bundles \cite{moerdijk, schreiber3}, whose structure groupoid is $\Lambda\mathrm{Gr}$. However, in the abelian case, i.e. $\mathrm{Gr}=\mathcal{BB}A$ for an abelian Lie group $A$, we have $\Lambda\mathrm{Gr} \cong \mathcal{B}A$ (see Lemma \ref{lem6} below), so that the transgression $\mathscr{T}_{\mathrm{tra}}$ is -- via Theorem \ref{th3} -- a principal $A$-bundle with connection over $\Omega M$. This fits well into Brylinski's picture of transgression \cite{brylinski1}.

\subsubsection{Curving and Curvature}

Suppose $\mathrm{tra}:\mathcal{P}_2(M) \to T$ is a transport 2-functor with $\mathcal{B}\mathfrak{G}$-structure, for $\mathfrak{G}$ some Lie 2-group coming from a smooth crossed module $(G,H,t,\alpha)$. Since such 2-functors are supposed to describe (non-abelian) gerbes with connection, we want to identify a \emph{3-form curvature}. Just as for (non-abelian) principal bundles, this curvature is only locally defined.

First we need the following fact: if $\mathfrak{g}$ and $\mathfrak{h}$ denote the Lie algebras of the Lie groups $G$ and $H$, respectively, there is a bijection
\begin{equation}
\label{bij}
\bigset{3.2cm}{Smooth 2-functors\\$F: \mathcal{P}_2(X) \rightarrow \mathcal{B}\mathfrak{G}$} \cong 
\bigset{5.8cm}{Pairs $(A,B) \in \Omega^1(X,\mathfrak{g}) \times \Omega^2(X,\mathfrak{h})$ satisfying $t_{*}(B) = \mathrm{d}A + [A \wedge A]$}\text{,}
\end{equation}
where $t_{*}: \mathfrak{h} \to \mathfrak{g}$ is the differential of $t$.
This bijection is the lowest level of an equivalence of 2-categories that we will review in more detail in Section \ref{sec4_1}; see Theorem \ref{th1}.

\begin{definition}
\label{def:curv}
Let $\mathrm{tra}: \mathcal{P}_2(M) \to T$ be a transport functor with $\mathcal{B}\mathfrak{G}$-structure over $M$, let $\pi\maps Y \to M$ be a surjective submersion, and let $(t,\mathrm{triv})$ be a $\pi$-local trivialization with smooth descent data.
\begin{enumerate}[(i)]

\item
The differential forms $A \in \Omega^1(Y,\mathfrak{g})$ and $B \in \Omega^2(Y,\mathfrak{h})$ that correspond to the smooth 2-functor $\mathrm{triv}$ under the above bijection, are called the \emph{1-curving} and the \emph{2-curving} of $\mathrm{tra}$. 
\item
The 3-form
\begin{equation*}
\mathrm{curv}(\mathrm{tra}) = \mathrm{d}B + \alpha_{*}(A \wedge B)\in\Omega^3(Y,\mathfrak{h})\text{,}
\end{equation*}
where $\alpha_{*}: \mathfrak{g} \times \mathfrak{h} \to \mathfrak{h}$ is the differential of the action $\alpha: G \times H \to H$ of the crossed module, is called the \emph{curvature} of $\mathrm{tra}$.

\end{enumerate}

\end{definition}

We recall that we  called a 2-functor $\mathrm{tra}:\mathcal{P}_2(M) \to T$ \emph{flat} if it factors through the projection $\mathcal{P}_2(M) \to \Pi_2(M)$ of thin homotopy classes of bigons to homotopy classes. The next proposition shows that this notion of flatness is equivalent to the vanishing of the curvature. 

\begin{proposition}
\label{prop3}
Suppose that the 2-functor $i:\mathcal{B}\mathfrak{G} \to T$ is injective on 2-morphisms.
A transport 2-functor $\mathrm{tra}:\mathcal{P}_2(M) \to T$ with $\mathcal{B}\mathfrak{G}$-structure is flat if and only if its local curvature 3-form $\mathrm{curv}(\mathrm{tra})\in\Omega^3(Y,\mathfrak{h})$ with respect to any smooth local trivialization vanishes.
\end{proposition}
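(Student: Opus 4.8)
The plan is to reduce the global, intrinsic condition of flatness to a purely local differential equation on the pair of forms $(A,B)$ attached to the trivializing $2$-functor $\mathrm{triv}$ by Theorem \ref{th1}, and then to recognise that equation as $\mathrm{curv}(\mathrm{tra})=0$. I will carry this out as a chain of equivalences. First I observe that flatness is detected locally: since the projection $\pi_{*}:\mathcal{P}_2(Y)\to\mathcal{P}_2(M)$ carries homotopies to homotopies, it descends to $\Pi_2(Y)\to\Pi_2(M)$, so if $\mathrm{tra}$ is flat then $\pi^{*}\mathrm{tra}=\mathrm{tra}\circ\pi_{*}$ is flat. Conversely, choosing local sections $\sigma_{\alpha}:U_{\alpha}\to Y$ of the surjective submersion gives $\mathrm{tra}|_{U_{\alpha}}=\sigma_{\alpha}^{*}\pi^{*}\mathrm{tra}$, so flatness of $\pi^{*}\mathrm{tra}$ forces $\mathrm{tra}$ to be flat on each $U_{\alpha}$; a Lebesgue-number subdivision of any cube $[0,1]^3\to M$ realising a homotopy of bigons into small sub-cubes, each landing in some $U_{\alpha}$, then pastes these local equalities together to give $\mathrm{tra}(\Sigma)=\mathrm{tra}(\Sigma')$ for homotopic $\Sigma,\Sigma'$. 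Hence $\mathrm{tra}$ is flat if and only if $\pi^{*}\mathrm{tra}$ is.

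Next I transport flatness across the local trivialization. The equivalence $t:\pi^{*}\mathrm{tra}\to\mathrm{triv}_i$ is a \pe, and flatness is invariant under such equivalences: for a bigon $\Sigma:\gamma_1\Rightarrow\gamma_2$, axiom (T2) expresses $\mathrm{triv}_i(\Sigma)$ in terms of $(\pi^{*}\mathrm{tra})(\Sigma)$ and of the components $t(\gamma_1),t(\gamma_2)$, which depend only on the thin-homotopy classes of the boundary paths. Since two bigons that are homotopic in the sense of Definition \ref{def1} with condition (3) dropped share thin-homotopic boundaries by condition (2), the boundary data coincide, and flatness of $\pi^{*}\mathrm{tra}$ is equivalent to flatness of $\mathrm{triv}_i=i\circ\mathrm{triv}$. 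Finally, because $i$ is injective on $2$-morphisms by hypothesis, $i(\mathrm{triv}(\Sigma))=i(\mathrm{triv}(\Sigma'))$ forces $\mathrm{triv}(\Sigma)=\mathrm{triv}(\Sigma')$, so $\mathrm{triv}_i$ is flat if and only if the smooth $2$-functor $\mathrm{triv}:\mathcal{P}_2(Y)\to\mathcal{B}\mathfrak{G}$ is flat (the reverse implication needing no hypothesis on $i$).

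It remains to establish the heart of the statement: the smooth $2$-functor $\mathrm{triv}$ factors through $\Pi_2(Y)$ if and only if $\mathrm{d}B+\alpha_{*}(A\wedge B)=0$. Because $\Pi_2(Y)$ coarsens only the $2$-morphisms, its objects and $1$-morphisms being those of $\mathcal{P}_2(Y)$, this is genuinely a condition on the surface transport alone and does \emph{not} involve the $G$-curvature $\mathrm{d}A+[A\wedge A]$, which for a $2$-functor is anyway constrained to equal $t_{*}\circ B$ by \erf{42}; it is exactly the one-degree-higher analogue of Remark \ref{rem1}, where a $\mathcal{B}G$-valued functor is flat precisely when $\mathrm{d}A+[A\wedge A]=0$. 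Using the explicit description from \cite{schreiber5} underlying Theorem \ref{th1}, the $2$-morphism $\mathrm{triv}(\Sigma)$ is the surface-ordered exponential of $B$ along $\Sigma$, conjugated by the $A$-parallel transport; a non-abelian Stokes argument in one higher dimension shows that the variation of this quantity under a homotopy $h:[0,1]^3\to Y$ of bigons is governed by the integral of the $3$-form $\mathrm{d}B+\alpha_{*}(A\wedge B)$ over the swept volume. Thus its vanishing is equivalent to homotopy-invariance of $\mathrm{triv}(\Sigma)$, that is, to flatness of $\mathrm{triv}$, and by \erf{35} this $3$-form is precisely $\mathrm{curv}(\mathrm{tra})$. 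Combining the four equivalences proves the proposition; moreover, since flatness of $\mathrm{tra}$ is intrinsic, the vanishing of $\mathrm{curv}(\mathrm{tra})$ is automatically independent of the chosen trivialization, which justifies the phrase ``with respect to any smooth local trivialization''.

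I expect this last step --- the precise Stokes-type identification of the curvature $3$-form as the obstruction to homotopy-invariance of the surface-ordered transport --- to be the main obstacle, since it requires the explicit integral formula for $\mathrm{triv}(\Sigma)$ and a careful higher-dimensional non-abelian Stokes computation. By contrast, the reductions in the first two paragraphs are essentially formal once flatness is recognised as both a local condition and one invariant under \pe.
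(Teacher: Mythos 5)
Your proposal is correct and takes essentially the same route as the paper's proof, which likewise splits the claim into (a) $\mathrm{curv}(\mathrm{tra})=0$ if and only if $\mathrm{triv}$ is flat and (b) $\mathrm{tra}$ is flat if and only if $\mathrm{triv}$ is flat, and proves (b) with the same ingredients you use: the naturality square (T2) for the trivialization $t$, the injectivity of $i$ on 2-morphisms, and the subdivision of a homotopy cube into small cubes whose bigons lift along $\pi$. The step you flag as the main obstacle---that flatness of the smooth 2-functor $\mathrm{triv}$ is equivalent to $\mathrm{d}B+\alpha_{*}(A\wedge B)=0$---is exactly what the paper dispatches by citing Lemma A.11 of \cite{schreiber5}, so your non-abelian Stokes sketch is precisely the content of that cited result rather than something requiring a new argument.
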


\begin{proof}
We proceed in two parts. (a): $\mathrm{curv}(\mathrm{tra})$ vanishes if and only if $\mathrm{triv}$ is a flat 2-functor, and (b): $\mathrm{tra}$ is flat if and only if $\mathrm{triv}$ is flat. The claim (a) follows from Lemma A.11 in \cite{schreiber5}. To see (b) consider two bigons $\Sigma_1: \gamma \Rightarrow \gamma'$  and $\Sigma_2: \gamma \Rightarrow \gamma'$ in $Y$ which are smoothly homotopic so that they define the same element in $\Pi_2(Y)$. Suppose $\mathrm{tra}$ is flat and let $\Sigma := \Sigma_2^{-1} \bullet \Sigma_1$. Axiom (T2) for the trivialization $t$ is then
\begin{equation*}
\alxydim{@C=1.8cm@R=1.6cm}{t(y) \circ \pi^{*}\mathrm{tra}(\gamma) \ar@{=>}[r]^-{t(\gamma)} \ar@{=>}[d]_{\id_{t(y)}\circ \pi^{*}\mathrm{tra}(\Sigma)} & \mathrm{triv}_i(\gamma) \circ t(x) \ar@{=>}[d]^{\mathrm{triv}_i(\Sigma) \circ \id_{t(x)}} \\ t(y) \circ \pi^{*}\mathrm{tra}(\gamma) \ar@{=>}[r]_-{t(\gamma)} & \mathrm{triv}_i(\gamma) \circ t(x)}
\end{equation*}
and since $\pi^{*}\mathrm{tra}(\Sigma)=\id$ by assumption it follows that $\mathrm{triv}_i(\Sigma)=\id$, i.e. $\mathrm{triv}$ is flat. Conversely, assume that $\mathrm{triv}$ is flat. The latter diagram shows that  $\mathrm{tra}$ is then flat on all bigons in the image of $\pi_{*}$. This is actually enough: let $h: [0,1]^3 \to M$  be a smooth homotopy between two bigons $\Sigma_1$ and $\Sigma_2$ which are not in the image of $\pi_{*}$. Like explained in Appendix A.3 of \cite{schreiber5} the cube $[0,1]^3$ can be decomposed into small cubes such that $h$ restricts to smooth homotopies between small bigons that bound these cubes. The decomposition can be chosen so small that each of these bigons is contained in the image of $\pi_{*}$, so that $\mathrm{tra}$ assigns the same value to the source and the target bigon of each small cube. By 2-functorality of $\mathrm{tra}$, this infers $\mathrm{tra}(\Sigma_1) = \mathrm{tra}(\Sigma_2)$.
\end{proof}

\subsection{Curvature 2-Functors}

\label{sec5_1}
\label{sec3_4}

In this section we provide a class of examples of transport 2-functors coming from transport functors, i.e. fibre bundle with connections. If $P$ is a principal $G$-bundle with connection $\omega$ over $M$, one can compare the parallel transport maps along two paths $\gamma_1,\gamma_2:x \to y$ by an automorphism of $P_y$, namely the holonomy around the loop $\gamma_2 \circ \gamma_1^{-1}$,
\begin{equation*}
\tau_{\gamma_2} = \mathrm{Hol}_{\omega}(\gamma_2 \circ \gamma_1^{-1}) \circ \tau_{\gamma_1}\text{.}
\end{equation*}
If the paths $\gamma_1$ and $\gamma_2$ are the source and the target of a bigon $\Sigma:\gamma_1\Rightarrow \gamma_2$, this holonomy is  related to the curvature of $\nabla$. So, a principal $G$-bundle with connection does not only assign fibres $P_x$ to points $x\in M$ and parallel transport maps $\tau_{\gamma}$ to paths, it also assigns a curvature-related quantity to bigons $\Sigma$.

Under the equivalence between principal $G$-bundles with connection and transport functors on $X$ with $\mathcal{B}G$-structure (Theorem \ref{th3}), the principal bundle $(P,\omega)$ corresponds to the transport functor
\begin{equation*}
\mathrm{tra}_P: \mathcal{P}_1(M) \to G\text{-}\mathrm{Tor}
\end{equation*}
that assigns the fibres $P_x$ to points $x\in M$ and the parallel transport maps $\tau_{\gamma}$ to paths $\gamma$. Adding an assignment for bigons  yields a \quot{curvature 2-functor}
\begin{equation*}
K(\mathrm{tra}_P): \mathcal{P}_2(M) \to \ttc{G\text{-}\mathrm{Tor}}
\end{equation*}
where $\ttc{G\text{-}\mathrm{Tor}}$ is the category $G\text{-}\mathrm{Tor}$ regarded as a strict 2-category with a unique 2-morphism between each pair of 1-morphisms. The uniqueness of the 2-morphisms expresses the fact that the curvature is  determined by the parallel transport.

More generally, let us  start with a  transport functor $\mathrm{tra}: \mathcal{P}_1(M) \to T$ with $\mathcal{B}G$-structure for some Lie group $G$ and some functor $i:\mathcal{B}G \to T$. 
\begin{comment}
We recall from \cite{schreiber3} that this means that there exists a surjective submersion $\pi:Y \to M$, a functor $\mathrm{triv}:\mathcal{P}_1(Y) \to \mathcal{B}G$ and a natural equivalence
\begin{equation*}
t: \pi^{*}\mathrm{tra} \to \mathrm{triv}_i
\end{equation*}
such that its descent data is smooth: the functor $\mathrm{triv}$ is smooth, and the natural transformation $g:\pi_1^{*}\mathrm{triv}_i \to \pi_2^{*}\mathrm{triv}_i$ factors through a smooth natural transformation $\tilde g: \pi_1^{*}\mathrm{triv} \to \pi_2^{*}\mathrm{triv}$, i.e. $g(\alpha) = i(\tilde g(\alpha))$ for every $\alpha\in Y^{[2]}$.
\end{comment}
The \emph{curvature 2-functor} of $\mathrm{tra}$ is the strict 2-functor
\begin{equation*}
K(\mathrm{tra}): \mathcal{P}_2(M) \to \ttc{T}
\end{equation*}
which is on objects and 1-morphisms equal to $\mathrm{tra}$ and  on 2-morphisms  determined by the fact that $\ttc{T}$ has a exactly one 2-morphism between each pair of 1-morphisms. In the same way, we obtain a strict 2-functor
\begin{equation*}
K(i): \ttc{\mathcal{B}G} \to \ttc{T}
\end{equation*}
We observe that the Lie 2-groupoids $\ttc{\mathcal{B}G}$ and $\mathcal{BE}G$ (see Section \ref{sec:features}) are canonically isomorphic under the assignment
\begin{equation*}
\bigon{\ast}{\ast}{g_1}{g_2}{*}\quad\mapsto\quad\bigon{\ast}{\ast\text{,}}{g_1}{g_2}{g_2g_1^{-1}}
\end{equation*}
so that we obtain a 2-functor $\mathcal{BE}G \to \ttc{\mathcal{B}G} \to \ttc{T}$. Now we are in the position to introduce our explicit example of a transport 2-functor.

\begin{theorem}
\label{th:lem4}
The curvature 2-functor $K(\mathrm{tra})$ is a  transport 2-functor with $\mathcal{BE}G$-structure.
\end{theorem}

\begin{proof}
We construct a local trivialization of $K(\mathrm{tra})$ starting with a local trivialization $(\mathrm{triv},t)$ of $\mathrm{tra}$ with respect to some surjective submersion $\pi:Y \to M$.  Let $\mathrm{dtriv}\maps\mathcal{P}_2(Y) \to \mathcal{BE}G$ be the derivative 2-functor associated to $\mathrm{triv}$ \cite{schreiber5}: on objects and 1-morphisms it is given by $\mathrm{triv}$, and it sends every bigon $\Sigma:\gamma_1\Rightarrow \gamma_2$ in $Y$ to the unique 2-morphism in $\mathcal{BE}G$ between the images of $\gamma_1$ and $\gamma_2$ under $\mathrm{triv}$. A \pe
\begin{equation*}
K(t): \pi^{*}K(\mathrm{tra}) \to K(i) \circ \mathrm{dtriv}
\end{equation*}
is defined as follows. Its component at a point $a\in Y$ is the 1-morphism
\begin{equation*}
K(t)(a):=t(a): \mathrm{tra}(\pi(a)) \to i(*)
\end{equation*}
in $T$. Its component $t(\gamma)$ at a path $\gamma:a \to b$ is the unique 2-morphism in $\ttc{T}$. Notice that since $t$ is a natural transformation, we have a commutative diagram
\begin{equation*}
\alxydim{@C=1.5cm}{\mathrm{tra}(\pi(a)) \ar[r]^{\mathrm{tra}(\pi(\gamma))} \ar[d]_{t(a)} & \mathrm{tra}(\pi(b)) \ar[d]^{t(b)} \\ i(*) \ar[r]_{\mathrm{triv}_i(\gamma)} & i(*)}
\end{equation*}
meaning that $t(\gamma)=\id$. This defines the \pt\ $t$ as required. 

Now we assume that the descent data $(\mathrm{triv},g_t)$ associated to the local trivialization $(\mathrm{triv},t)$ is smooth, and show that then also the descent object $(\mathrm{dtriv},g_{K(t)},\psi,f)$ is smooth. As observed in \cite{schreiber5}, the derivative 2-functor $\mathrm{dtriv}$ is smooth if and only if $\mathrm{triv}$ is smooth. To extract the remaining descent data according to the procedure described in Section \ref{sec:loctriv}. It turns out that the only non-trivial descent datum is the \pt\ 
\begin{equation*}
g_{K(t)}: \pi_1^{*}\mathrm{dtriv}_{K(i)} \to \pi_2^{*}\mathrm{dtriv}_{K(i)}\text{.}
\end{equation*}
Its component at a point $\alpha\in Y^{[2]}$ is given by $g_{K(t)}(\alpha) := g_t(\alpha)$, and its component at some path $\Theta:\alpha \to \alpha'$ is  the identity. 

The last step is to show that 
\begin{equation*}
\holo(g_{K(t)}): \mathcal{P}_1(Y^{[2]}) \to \Lambda_{K(i)}\ttc{T}
\end{equation*}
is a transport functor with $\Lambda\mathcal{BE}G$-structure.
To do so we have to find a local trivialization with smooth descent data. This is here particulary simple: the functor $\holo(g_{K(t)})$ is \emph{globally} trivial in the sense that it factors through the functor
\begin{equation*}
\Lambda K(i):\Lambda\mathcal{BE}G \to \Lambda_{K(i)}\ttc{T}\text{.}
\end{equation*}
To see this we use the smoothness condition on the natural transformation $g_t$, namely that it factors through a smooth natural transformation $\tilde g_t$. We obtain a smooth pseudonatural transformation $\tilde g_{K(t)}: \pi_1^{*}\mathrm{dtriv} \to \pi_2^{*}\mathrm{dtriv}$ such that $g_{K(t)} = K(i) (\tilde g_{K(t)})$. This finally gives us
\begin{equation*}
\holo(g_{K(t)}) = \Lambda K(i) \circ \holo(\tilde g_{K(t)})
\end{equation*}  
meaning that $\holo(g_{K(t)})$ is a transport functor with $\Lambda\mathcal{BE}G$-structure.
\end{proof}

Since the value of the curvature 2-functor $K(\mathrm{tra})$ on bigons does not depend on the bigon itself but only on its source and target path, we have the following.

\begin{proposition}
\label{prop5}
The curvature 2-functor $K(\mathrm{tra})$ of any transport functor  is flat.
\end{proposition}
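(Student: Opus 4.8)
The plan is to produce the factorization required by the definition of flatness directly, exploiting that $\ttc{T}$ has exactly one 2-morphism between any two parallel 1-morphisms. Recall that $K(\mathrm{tra})$ agrees with $\mathrm{tra}$ on objects and 1-morphisms and sends every bigon $\Sigma:\gamma_1\Rightarrow\gamma_2$ to the unique 2-morphism of $\ttc{T}$ from $\mathrm{tra}(\gamma_1)$ to $\mathrm{tra}(\gamma_2)$; thus its value on a bigon depends only on the source and target paths, and not on the bigon itself.

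First I would recall that the projection $\mathcal{P}_2(M)\to\Pi_2(M)$ is the identity on objects and on 1-morphisms and, on 2-morphisms, is the quotient by the coarser relation obtained by dropping condition (3) of Definition \ref{def1}. Crucially, condition (2) is retained, so equivalent bigons have thin-homotopy equivalent --- hence equal in $P^1M$ --- source and target paths; consequently the source and target are well-defined maps on the 2-morphisms of $\Pi_2(M)$, valued in $P^1M$. I then define a strict 2-functor $\bar K:\Pi_2(M)\to\ttc{T}$ to be $\mathrm{tra}$ on objects and 1-morphisms and to send each 2-morphism to the unique 2-morphism of $\ttc{T}$ between the images of its source and target.

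The key step is to check that $\bar K$ is well-defined and is a 2-functor, and this is where the uniqueness of 2-morphisms in $\ttc{T}$ does all the work: well-definedness on 2-morphisms holds because the value depends only on the (now well-defined) source and target 1-morphism classes, and the 2-functor axioms (F1)--(F4) are equations between parallel 2-morphisms of $\ttc{T}$, hence hold vacuously. Finally $K(\mathrm{tra})=\bar K\circ(\mathcal{P}_2(M)\to\Pi_2(M))$ is immediate, since both sides agree on objects, 1-morphisms and bigons; this is exactly the assertion that $K(\mathrm{tra})$ is flat. I do not anticipate a genuine obstacle here --- the only point deserving care is to invoke condition (2) of Definition \ref{def1} explicitly, so that passing to $\Pi_2(M)$ preserves the source and target paths on which the value of $K(\mathrm{tra})$ depends.
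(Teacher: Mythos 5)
Your proposal is correct and follows essentially the same route as the paper: the paper's proof is exactly the observation that $K(\mathrm{tra})$'s value on a bigon depends only on the source and target paths, hence is independent of the (thin or non-thin) homotopy class of the bigon, so it factors through $\mathcal{P}_2(M)\to\Pi_2(M)$. Your version merely spells out the implicit details — that condition (2) of Definition \ref{def1} keeps source and target well-defined on $\Pi_2(M)$, and that the uniqueness of 2-morphisms in $\ttc{T}$ makes the factorizing 2-functor well-defined and its axioms vacuous — which is a faithful elaboration rather than a different argument.
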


This proposition gains a very nice interpretation when we relate the curvature of a connection $\omega$ in a principal $G$-bundle $p:P\to M$ to the  curvature 2-functor $K(\mathrm{tra}_P)$ associated to the corresponding transport functor $\mathrm{tra}_P$. First we show:

\begin{lemma}
\label{lem2}
The curvature 2-functor $K(\mathrm{tra}_P): \mathcal{P}_2(M) \to \ttc{G\text{-}\mathrm{Tor}}$ has a canonical smooth $p$-local trivialization $(p,t,\mathrm{triv})$. The classical curvature $\mathrm{curv}(\omega) \in \Omega^2(P,\mathfrak{g})$ is the 2-curving of  $K(\mathrm{tra}_P)$ with respect to the trivialization $(p,t,\mathrm{triv})$.
\end{lemma}

\begin{proof}
As described in detail in \cite[Section 5.1]{schreiber3}, $\mathrm{tra}_P$ admits local trivializations with respect to the surjective submersion $p:P \to M$ and with smooth descent data $(\mathrm{triv}',g)$ such that the connection 1-form $\omega \in \Omega^1(P,\mathfrak{g})$ of the bundle $P$ corresponds to the smooth functor $\mathrm{triv}':\mathcal{P}_1(P) \to \mathcal{B}G$ under the bijection of \cite[Proposition 4.7]{schreiber3}. Then, by \cite[Lemma 3.5]{schreiber5}, the 2-form $B'$ associated to $\mathrm{dtriv}'$ under the bijection \erf{bij}, which is by Definition \ref{def:curv} (i) the 2-curving of $K(\mathrm{tra}_P)$, is given by
\begin{equation*}
B'=\mathrm{d}\omega + [\omega \wedge \omega]\text{.}
\end{equation*}
The latter is by definition the curvature of the connection $\omega$. 
\end{proof}

The announced  interpretation of Proposition \ref{prop5} now is  as follows: using Lemma \ref{lem2} one can  calculate the curvature $\mathrm{curv}(K(\mathrm{tra}_P))$ of the curvature 2-functor of $\mathrm{tra}_P$. The calculation involves the second Bianchi identity for the connection $\omega$ on the principal $G$-bundle $P$, and the result is
\begin{equation*}
\mathrm{curv}(K(\mathrm{tra}_P)) = 0\text{,}
\end{equation*}
which is according to Proposition \ref{prop3} an independent proof of Proposition \ref{prop5}. In other words, Proposition \ref{prop5} is equivalent to the \emph{second Bianchi identity} for connections on fibre bundles.

\section{Transport 2-Functors are Non-Abelian Gerbes}

\label{sec4}

In this section we show that transport 2-functors reproduce -- systematically, by choosing appropriate target 2-categories and  structure 2-groups -- four known concepts of gerbes with connections: Deligne cocycles and Breen-Messing cocycles (Section \ref{sec4_1}),  abelian bundle gerbes (Section \ref{sec3_2}), and  non-abelian bundle gerbes  (Section \ref{sec3_3}). In Section \ref{sec4_4} we establish a further relation between transport 2-functors and 2-vector bundles with connection. Conversely, all these structures are examples of transport 2-functors.

\subsection{Differential Non-Abelian Cohomology}

\label{sec4_1}

In this section we  set up a classifying theory for transport 2-functors $\mathcal{P}_2(M) \to T$ with structure Lie 2-groupoid $i: \mathcal{B}\mathfrak{G} \to T$, for $\mathfrak{G}$ a Lie 2-group coming from a smooth crossed module $(G,H,t,\alpha)$, $T$ an arbitrary 2-category, and $i$ an equivalence of categories. The latter condition together with Theorem \ref{th4} implies that we have equivalences  
\begin{equation}
\label{mainseq}
\transport{}{2}{\mathcal{B}\mathfrak{G}}{T}\cong \transsmoothpi{i}{2}{\mathrm{Gr}}{}_M \cong \transsmoothpi{\id_{\mathcal{B}\mathfrak{G}}}{2}{\mathrm{Gr}}{}_M  \end{equation}
of 2-categories. 

Our strategy is to translate the structure of the 2-category $\transsmoothpi{\id_{\mathcal{B}\mathfrak{G}}}{2}{\mathrm{Gr}}{}_M$ into  an equivalent 2-category of \quot{non-abelian differential cocycles} made up of smooth functions and differential forms, with respect to open covers of $M$.
Isomorphism classes of non-abelian differential cocycles form a set which we define as the \emph{non-abelian differential cohomology} of $M$; it classifies transport 2-functors on $M$ with $\mathcal{B}\mathfrak{G}$-structure up to isomorphism.

\subsubsection{Smooth Functors and Differential Forms}

For the translation of the 2-category $\transsmoothpi{\id_{\mathcal{B}\mathfrak{G}}}{2}{\mathrm{Gr}}{}_M$ into  smooth functions and differential forms we recall a general result about  the 2-category $\mathrm{Funct}^{\infty}(\mathcal{P}_2(X),\mathcal{B}\mathfrak{G})$ of smooth 2-functors, smooth \pt s, and smooth modifications defined on a smooth manifold $X$, with values in the Lie 2-groupoid $\mathcal{B}\mathfrak{G}$. Following \cite[Section 2.2]{schreiber5}, it corresponds to the following structure expressed in terms of smooth functions and differential forms:

\begin{enumerate}[(i)]
\item 
A smooth 2-functor $F:\mathcal{P}_2(X) \to \mathcal{B}\mathfrak{G}$ induces a pair of differential forms: a 1-form $A\in\Omega^1(X,\mathfrak{g})$ with values in the Lie algebra of $G$, and a 2-form $B\in\Omega^2(X,\mathfrak{h})$ with values in the Lie algebra of $H$, such that
\begin{equation}
\label{42}
\mathrm{d}A + [A
\wedge A] = t_{*} \circ B\text{.}
\end{equation}

\item
A smooth pseudonatural transformation $\rho:F \to F'$ gives rise to a 1-form $\varphi\in\Omega^1(X,\mathfrak{h})$ and a smooth map $g:X \to G$, such that
\begin{eqnarray}
A' + t_{*} \circ \varphi &=& \mathrm{Ad}_g(A) - g^{*}\bar\theta   
\label{57}\\\label{55}
B' + \alpha_{*}(A' \wedge \varphi) + \mathrm{d}\varphi + [\varphi \wedge \varphi]&=& (\alpha_g)_{*} \circ B  \text{.}
\end{eqnarray}
The identity $\id:F \to F$ has $\varphi=0$ and $g=1$. If $\rho_1$ and $\rho_2$ are composable \pt s, the 1-form of their composition $\rho_2\circ\rho_1$ is $(\alpha_{g_{2}})_{*} \circ\varphi_1 +  \varphi_2$, and the smooth map is $g_2g_1:X \to G$.

\item
A smooth modification $\mathcal{A}: \rho \Rightarrow \rho'$ gives rise to a smooth map $a:X \to H$, such that
\begin{equation}
\label{58}
g' = (t \circ a) \cdot g
\quad\text{ and }\quad
\varphi' +(r_{a}^{-1} \circ \alpha_{a})_{*}(A') =  \mathrm{Ad}_a (\varphi) -a^{*}\bar\theta\text{.}
\end{equation}
The identity modification $\id_{\rho}$ has $a=1$. If two modifications $\mathcal{A}_1$ and $\mathcal{A}_2$ are vertically composable, $\mathcal{A}_2 \bullet \mathcal{A}_1$ has the map $a_2a_1$. If two modifications $\mathcal{A}_1:\rho_1 \Rightarrow \rho_1'$ and $\mathcal{A}_2:\rho_2 \Rightarrow \rho_2'$ are horizontally composable, $\mathcal{A}_2 \circ \mathcal{A}_1$ has the map $a_2\alpha(g_2,a_1)$.

\end{enumerate}
The structure (i), (ii) and (iii)  forms a strict 2-category $\diffco{\mathfrak{G}}{2}{X}$, which we call the  \emph{2-category of $\mathfrak{G}$-connections on $X$} \cite{schreiber5}.

\begin{theorem}[{{\cite[Theorem 2.20]{schreiber5}}}]
\label{th1}
The correspondences described above furnish a strict 2-functor
\begin{equation*}
\alxydim{@C=1.5cm}{\mathrm{Funct}^{\infty}(\mathcal{P}_2(X),\mathcal{B}
\mathfrak{G}) \ar[r]^-{\mathcal{D}} &  \diffco{\mathfrak{G}}{2}{X}\text{,}}
\end{equation*}
which is an isomorphism of 2-categories.
\end{theorem}

We remark that  we have already used this isomorphism on the level of objects as a bijection \erf{bij}.

For a general Lie 2-group $\mathfrak{G}$ the correspondence between a smooth 2-functor $F: \mathcal{P}_2(X) \to \mathcal{B}\mathfrak{G}$ and the pair $(A,B)$ with $A\in \Omega^1(X,\mathfrak{g})$ and $B\in \Omega^2(X,\mathfrak{h})$ is established by an iterated integration and described in detail in \cite[Section 2.3.1]{schreiber5}. For the Lie 2-group $\mathfrak{G}=\mathcal{B}S^1$ it reduces to the following relation.

\begin{lemma}
\label{sfdfbsone}
Let $B \in \Omega^2(X)$ be a 2-form. Then, the smooth 2-functor $F: \mathcal{P}_2(X) \to \mathcal{BB}S^1$ that corresponds to $B$ under the isomorphism of Theorem \ref{th1} is given by
\begin{equation*}
F(\Sigma) = \exp \left (- \int_{[0,1]^2} \Sigma^{*}B \right )\text{,}
\end{equation*}
for all bigons $\Sigma \in BX$. 
\end{lemma}

\begin{proof}
We go through the construction in \cite[Section 2.3.1]{schreiber5} and reduce everything to the case $\mathfrak{G}=\mathcal{B}S^1$. The 1-form $\mathcal{A}_{\Sigma}$ in \cite[Equation 2.26]{schreiber5} is
\begin{equation*}
\mathcal{A}_{\Sigma} = -\int_{[0,1]} \Sigma^{*}B \in \Omega^1([0,1])\text{,}
\end{equation*}
with the integration performed over the second factor of $[0,1]^2$. It extends (due to the sitting instants of $\Sigma$) to a 1-form on $\R$, and so corresponds via \cite[Proposition 4.7]{schreiber3}  to a smooth functor $F_{\mathcal{A}_{\Sigma}}\maps\mathcal{P}_1(\R) \to \mathcal{B}S^1$, which is just a smooth map $f_{\Sigma}: \R \times \R \to S^1$. The correspondence between the 1-form $\mathcal{A}_{\Sigma}$ and the function $f_{\Sigma}$ is by \cite[Lemma 4.1]{schreiber3}:
\begin{equation*}
f_{\Sigma}(t_0,t_1) = \exp \left ( \int_{t_0}^{t_1} -\mathcal{A}_{\Sigma} \right )\text{.}
\end{equation*} 
According to the prescription, \cite[Equation 2.27]{schreiber5} and \cite[Proposition 2.17]{schreiber5} define
\begin{equation*}
F(\Sigma) = f_{\Sigma}(0,1)^{-1} = \exp \left (  \int_{0}^{1}  \mathcal{A}_{\Sigma} \right ) = \exp \left ( -\int_{[0,1]^2} \Sigma^{*}B \right )\text{.}\vspace{-1em}
\end{equation*}
\end{proof}

\subsubsection{Non-abelian Differential Cocycles}

We want to translate the structure of the  2-category $\transsmoothpi{\id_{\mathcal{B}\mathfrak{G}}}{2}{\mathrm{Gr}}{\pi}$ of smooth descent data with respect to a surjective submersion $\pi:Y \to M$ into smooth functions and differential forms, using Theorem \ref{th1} as a dictionary.

We recall that an object in $\transsmooth{\id_{\mathcal{B}\mathfrak{G}}}{2}{}$ is a collection $(\mathrm{triv},g,\psi,f)$ containing a smooth 2-functor $\mathrm{triv}\maps\mathcal{P}_2(Y) \to \mathcal{B}\mathfrak{G}$, a \pt\
$g: \pi_1^{*}\mathrm{triv} \to \pi_2^{*}\mathrm{triv}$
whose associated functor $\holo(g_{0}):\mathcal{P}_1(Y^{[2]}) \to \mathcal{B}\mathfrak{G}$ is a transport functor, and  modifications $\psi$ and $f$ whose associated natural transformations $\mathscr{F}(\psi)$ and $\mathscr{F}(f)$ are morphisms between transport functors. 

We begin with looking at a sub-2-category $\mathcal{U} \subset \transsmooth{\id_{\mathcal{B}\mathfrak{G}}}{2}{}$ in which all pseudonatural transformations and modifications are smooth, i.e. correspond to trivial transport functors and morphisms between trivial transport functors. An object $(\mathrm{triv},g_{\infty},\psi_{\infty},f_{\infty})$  in $\mathcal{U}$ corresponds under Theorem \ref{th1} to the following structure:
\begin{enumerate}
\item[(a)]
an object $(A,B) := \fo(\mathrm{triv})$  in $\diffco{\mathfrak{G}}{2}{Y}$, i.e. differential forms $A\in\Omega^1(Y,\mathfrak{g})$ and $B\in\Omega^2(Y,\mathfrak{h})$ satisfying relation \erf{42}.

\item[(b)]
a 1-morphism 
\begin{equation*}
(g,\varphi) := \fo(g_{\infty}):  \pi_1^{*}(A,B) \to \pi_2^{*}(A,B)
\end{equation*}
in $\diffco{\mathfrak{G}}{2}{Y^{[2]}}$, i.e. a smooth function $g:Y^{[2]} \to G$ and a 1-form $\varphi\in\Omega^1(Y^{[2]},\mathfrak{h})$ satisfying the relations \erf{57} and \erf{55}. 

\item[(c)]
a 2-morphism 
\begin{equation*}
f := \fo(f_{\infty}): \pi_{23}^{*}(g,\varphi) \circ \pi_{12}^{*}(g,\varphi)  \Rightarrow \pi_{13}^{*}(g,\varphi)
\end{equation*}
in $\diffco{\mathfrak{G}}{2}{Y^{[3]}}$ and a 2-morphism
\begin{equation*}
 \psi := \fo(\psi_{\infty}): \id_{(A,B)} \Rightarrow \Delta^{*}(g,\varphi)
\end{equation*} 
in $\diffco{\mathfrak{G}}{2}{Y}$; these are smooth functions $f:Y^{[3]} \to H$ and $\psi: Y \to H$ satisfying relations \erf{58}.
\end{enumerate}
The coherence conditions for $f_{\infty}$ and $\psi_{\infty}$ \cite[Definition 2.2.1]{schreiber6} imply the following equations of smooth functions, expressed as pasting diagrams: 
\begin{equation}
\label{eq:diffob1}
\alxydim{@C=1.3cm@R=1.9cm}{\pi_2^{*}(A,B) \ar[r]^{\pi_{23}^{*}(g,\varphi)} \ar@{<-}[d]_{\pi_{12}^{*}(g,\varphi)}
& \pi_{3}^{*}(A,B) \ar[d]^{\pi_{34}^{*}(g,\varphi)} \\ \pi_1^{*}(A,B)  \ar[ur]|{\pi_{13}^{*}(g,\varphi)}="2" \ar@{=>}[u];"2"|{\pi_{123}^{*}f} \ar[r]_{\pi_{14}^{*}(g,\varphi)}="1"
&  \ar@{<=}"1";"2"|{\pi_{134}^{*}f} \pi_{4}^{*}(A,B)}
=
\alxydim{@C=1.3cm@R=1.9cm}{\pi_2^{*}(A,B) \ar[dr]|{\pi_{24}^{*}(g,\varphi)}="2" \ar[r]^{\pi_{23}^{*}(g,\varphi)} \ar@{<-}[d]_{\pi_{12}^{*}(g,\varphi)}
& \pi_{3}^{*}(A,B) \ar[d]^{\pi_{34}^{*}(g,\varphi)}  \ar@{<=}"2";[]|{\pi_{234}^{*}f}
 \\  \pi_1^{*}(A,B)  \ar[r]_{\pi_{14}^{*}(g,\varphi)}="1"
\ar@{<=}"1";"2"|{\pi_{124}^{*}f}& \pi_{4}^{*}(A,B)}
\end{equation}
in the 2-category $\diffco{\mathfrak{G}}{2}{Y^{[4]}}$ and
\begin{equation}
\label{eq:diffob2}
\alxydim{@R=1.1cm@C=-0.2cm}{& \pi_2^{*}(A,B) \ar[dr]|{\Delta_{22}^{*}(g,\varphi)}="3" \ar@/^3pc/[dr]^{\id_{\pi_2^{*}(A,B)}}="2" \ar@{=>}"2";"3"|-{\pi_2^{*}\psi} & \\ \pi_1^{*}(A,B) \ar[ur]^{(g,\varphi)} \ar[rr]_{(g,\varphi)}="1" && \pi_2^{*}(A,B) \ar@{=>}[ul];"1"|>>>>>{\Delta_{122}^{*}f} }
\hspace{-0.8cm}= \id_{(g,\varphi)} = \hspace{-0.8cm}
\alxydim{@R=1.1cm@C=-0.2cm}{& \pi_1^{*}(A,B) \ar[dr]^{(g,\varphi)} & \\ \pi_1^{*}(A,B) \ar[ur]|{\Delta_{11}^{*}(g,\varphi)}="3"
\ar@/^3pc/[ur]^{\id_{\pi_1^{*}(A,B)}}="2" \ar@{=>}"2";"3"|-{\pi_1^{*}\psi} \ar[rr]_{(g,\varphi)}="1" && \pi_2^{*}(A,B) \ar@{=>}[ul];"1"|>>>>>{\Delta_{112}^{*}f} }
\end{equation}
in the 2-category $\diffco{\mathfrak{G}}{2}{Y^{[2]}}$.
We call the structure (a), (b), (c) subject to the two conditions
\erf{eq:diffob1} and \erf{eq:diffob1} a \emph{differential $\mathfrak{G}$-cocycle} for the surjective submersion $\pi:Y \to X$.

We proceed similarly with a  1-morphism $(h_{\infty},\varepsilon_{\infty})$ between  objects $(\mathrm{triv},g_{\infty},\psi_{\infty},f_{\infty})$ and $(\mathrm{triv}',g'_{\infty},\psi'_{\infty},f'_{\infty})$ in $\mathcal{U}$, and obtain:
\begin{enumerate}
\item [(d)]
a 1-morphism
$(h,\phi) := \mathcal{D}(h_{\infty}): (A,B) \to (A',B')$
in $\diffco{\mathfrak{G}}{2}{Y}$, i.e. a smooth function $h\maps Y \to G$ and a 1-form $\phi\in\Omega^1(Y,\mathfrak{h})$ satisfying relations \erf{57} and \erf{55},

\item[(e)]
 a 2-morphism 
$\varepsilon := \mathcal{D}(\varepsilon): \pi_2^{*}(h,\phi) \circ (g,\varphi) \Rightarrow (g',\varphi') \circ \pi_1^{*}(h,\phi)$
in $\diffco{\mathfrak{G}}{2}{Y^{[2]}}$, i.e. a smooth function $\varepsilon: Y^{[2]} \to H$ satisfying \erf{58}. 
\end{enumerate}
The coherence conditions of Definition \cite[Definition 2.2.2]{schreiber6} result in the identities 
\begin{equation}
\label{eq:diff1morph1}
\alxydim{@C=1.2cm@R=1.2cm}{(A,B) \ar@/^2.5pc/[r]^{\id}="1" \ar[r]|{\Delta^{*}(g,\varphi)}="2"
\ar@{=>}"1";"2"|{\psi} \ar[d]_{(h,\phi)} & (A,B) \ar[d]^{(h,\phi)}
\ar@{=>}[dl]|{\Delta^{*}\varepsilon} \\ (A',B') \ar[r]_{\Delta^{*}(g',\varphi')} & (A',B')}
=
\alxydim{@C=1.2cm@R=1.2cm}{(A,B) \ar[r]^{\id} \ar[d]_{(h,\phi)} & (A,B) \ar[d]^{(h,\phi)}
\ar@{=>}[dl]|{\id_{(h,\phi)}} \\ (A',B') \ar@/_2.5pc/[r]_{\Delta^{*}(g',\varphi')}="2"   \ar[r]|{\id}="1" & (A',B') \ar@{=>}"1";"2"|{\psi'}}
\end{equation}
in the 2-category  $\diffco{\mathfrak{G}}{2}{Y}$ and
\begin{equation}
\label{eq:diff1morph2}
\alxydim{@R=1.2cm@C=-0.3cm}{&\pi_2^{*}(A,B) \ar@/^1pc/[dr]^-{\pi_{23}^{*}(g,\varphi)}&\\ \pi_1^{*}(A,B) \ar@/^1pc/[ur]^-{\pi_{12}^{*}(g,\varphi)} \ar[rr]^{\pi_{13}^{*}(g,\varphi)}="1" \ar@{=>}[ur];"1"|{^{}f} \ar[d]_{\pi_1^{*}(h,\phi)}
&&\pi_3^{*}(A,B) \ar[d]^{\pi_3^{*}(h,\phi)} \ar@{=>}[dll]|{\pi_{13}^{*}\varepsilon}
\\ \pi_1^{*}(A',B') \ar[rr]_{\pi_{13}^{*}(g',\varphi')} && \pi_3^{*}(A',B')'}
=
\alxydim{@R=1.2cm@C=0.8cm}{\pi_1^{*}(A,B)\ar[d]_{\pi_1^{*}(h,\phi)} \ar[r]^{\pi_{12}^{*}(g,\varphi)}
& \pi_2^{*}(A,B) \ar[d]|{\pi_2^{*}(h,\phi)} \ar@{=>}[dl]|{\pi_{12}^{*}\varepsilon} \ar[r]^{\pi_{23}^{*}(g,\varphi)}
&\pi_3^{*}(A,B) \ar[d]^{\pi_3^{*}(h,\phi)} \ar@{=>}[dl]|{\pi_{23}^{*}\varepsilon} \\\pi_1^{*}(A',B')  \ar@/_3.3pc/[rr]_{\pi_{13}^{*}(g',\varphi')}="2"
\ar@{=>}[r];"2"|{f'} \ar[r]_{\pi_{12}^{*}(g',\varphi')}
& \pi_2^{*}(A',B')\ar[r]_{\pi_{23}^{*}(g',\varphi')} & \pi_3^{*}(A',B')}
\end{equation}
in the 2-category  $\diffco{\mathfrak{G}}{2}{Y^{[3]}}$. The structure (d), (e) subject to the conditions \erf{eq:diff1morph1} and \erf{eq:diff1morph2} is called a \emph{1-morphism between differential $\mathfrak{G}$-cocycles} for the surjective submersion $\pi$.

Finally, a  2-morphism induces   a 2-morphism $E: (h,\phi) \Rightarrow (h',\phi')$ in $\diffco{\mathfrak{G}}{2}{Y}$, i.e. a smooth function $E:Y \to H$ that satisfies \erf{58}, and the coherence condition of \cite[Definition 2.2.3]{schreiber6} infers the identity 
\begin{equation*}
\hspace{-0.4cm}
\alxydim{@C=0.7cm@R=1.2cm}{\pi_1^{*}(A,B) \ar[r]^{(g,\varphi)} \ar@/_2.5pc/[d]_{\pi_1^{*}(h',\phi')}="2"
\ar[d]^{\pi_1^{*}(h,\phi)}="1" \ar@{=>}"1";"2"|-{\pi_1^{*}E} &  \pi_2^{*}(A,B)
\ar@{=>}[dl]|{\varepsilon}
\ar[d]^{\pi_2^{*}(h,\phi)} \\ 
\pi_1^{*}(A',B')\ar[r]_{(g',\varphi')} &\pi_2^{*}(A',B') }
=
\alxydim{@C=0.7cm@R=1.2cm}{\pi_1^{*}(A,B) \ar[r]^{(g,\varphi)} 
\ar[d]_{\pi_1^{*}(h',\phi')}& \pi_2^{*}(A,B)
\ar@/^2.5pc/[d]^{\pi_2^{*}(h,\phi)}="2"
\ar@{=>}[dl]|{\varepsilon'}
\ar[d]_{\pi_2^{*}h_2}="1" \ar@{=>}"2";"1"|{\pi_2^{*}E}   \\ \pi_1^{*}(A',B')
\ar[r]_{(g',\varphi')} & \pi_2^{*}(A',B')}
\end{equation*}
in the 2-category $\diffco{\mathfrak{G}}{2}{Y^{[2]}}$. Such data is called a \emph{2-morphism between differential $\mathfrak{G}$-cocycles}.

We have now collected, in a systematical way, objects, 1-morphisms, and 2-morphisms of a 2-category, the  \emph{2-category of degree two differential $\mathfrak{G}$-cocycles}, which we denote by $\diffco{\mathfrak{G}}{2}{\pi}$. By construction we have a 2-functor
\begin{equation*}
\alxydim{@C=1.5cm}{ \transsmoothpi{\id_{\mathcal{B}\mathfrak{G}}}{2}{\mathrm{Gr}}{\pi}\supseteq \mathcal{U} \ar[r]^-{\mathcal{D}_{\pi}} & \diffco{\mathfrak{G}}{2}{\pi} \text{,}} 
\end{equation*}
and Theorem \ref{th1} implies immediately that it is an isomorphism of 2-categories.

Next we argue that for certain surjective submersions, the sub-2-category $\mathcal{U}$ is in fact equivalent to $\transsmoothpi{\id_{\mathcal{B}\mathfrak{G}}}{2}{\mathrm{Gr}}{\pi}$. These are submersions for which $Y$ and $Y^{[2]}$ have contractible connected components; we will call these \emph{two-contractible}. Any good open cover gives rise to a two-contractible surjective submersion; in particular, any surjective submersion can be refined by a two-contractible one.

\begin{proposition}
\label{prop1}
Let $\mathfrak{G}$ be a Lie 2-group and let $\pi:Y \to M$ be a two-contractible  surjective submersion. Then, the inverse of the 2-functor $\mathcal{D}_{\pi}$ induces an equivalence of categories:
\begin{equation*}
\diffco{\mathfrak{G}}{2}{\pi} \cong \transsmooth{\id_{\mathcal{B}\mathfrak{G}}}{2}{}\text{.} \end{equation*}
\end{proposition}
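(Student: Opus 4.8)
The plan is to observe that nearly all of the work has already been carried out in the construction preceding the statement, so that the proposition follows from the strict invertibility of $\fo$ asserted in Theorem \ref{th1}. First I would make the forward 2-functor explicit. Starting from a smooth descent object $(\mathrm{triv},g_0,\psi_0,f_0)$ in $\transsmooth{\id_{\mathcal{B}\mathfrak{G}}}{2}{}$, I would use the two-contractibility of $\pi$ to trivialize: since $\holo(g_0)$ is a transport functor with $\Lambda\mathcal{B}\mathfrak{G}$-structure over the space $Y^{[2]}$, whose connected components are contractible, Corollary 3.13 in \cite{schreiber3} replaces $g_0$ by an equivalent \emph{smooth} \pt\ $g_{\infty}:\pi_1^{*}\mathrm{triv} \to \pi_2^{*}\mathrm{triv}$, and likewise $\psi_0,f_0$ by smooth modifications $\psi_{\infty},f_{\infty}$. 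Applying $\fo$ levelwise over $Y$, $Y^{[2]}$ and $Y^{[3]}$ then converts $\mathrm{triv}$, $g_{\infty}$ and $(\psi_{\infty},f_{\infty})$ into precisely the forms and functions (a), (b), (c) of a degree two differential $\mathfrak{G}$-cocycle.

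Next I would verify that the coherence conditions are carried across faithfully. Because $\fo$ is a strict 2-functor, indeed a strict isomorphism, it commutes with the fibre-product pullbacks $\pi_{i_1\dots i_p}^{*}$ and with $\Delta^{*}$ and preserves all horizontal and vertical compositions, associators and \unifier s; hence the two descent conditions \erf{2} and \erf{3} are mapped exactly onto the two pasting identities displayed above, and in the same way the 1-morphism conditions \erf{5}, \erf{4} and the 2-morphism condition \erf{6} are carried onto the corresponding cocycle identities. This shows that $\fo$ induces a strict 2-functor $\transsmooth{\id_{\mathcal{B}\mathfrak{G}}}{2}{} \to \diffco{\mathfrak{G}}{2}{\pi}$.

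To finish I would build the inverse from the canonical strict inverse $\fo^{-1}$ of Theorem \ref{th1}. Applying $\fo^{-1}$ levelwise to a differential cocycle yields a smooth 2-functor $\mathrm{triv}$, a smooth \pt\ $g$ and smooth modifications $\psi,f$; a smooth \pt\ is in particular one whose associated functor $\holo(g)$ is a globally trivial transport functor with $\Lambda\mathcal{B}\mathfrak{G}$-structure, so this tuple is a bona fide smooth descent object. Since $\fo\circ\fo^{-1}$ and $\fo^{-1}\circ\fo$ are identities at every level and both assignments respect the fibre-product pullbacks, the two induced 2-functors are mutually strictly inverse, which is the asserted isomorphism of 2-categories.

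The main obstacle is the single non-formal step in the forward direction: passing from the \emph{transport-functor} smoothness condition on $\holo(g_0)$ to a genuinely smooth representative $g_{\infty}$ to which $\fo$ can be applied. This is exactly where the two-contractibility hypothesis is indispensable, since only over a space with contractible connected components is every transport functor globally trivializable (Corollary 3.13 in \cite{schreiber3}); with $i=\id_{\mathcal{B}\mathfrak{G}}$ the resulting trivialization makes $\holo(g_0)$ factor through a smooth functor with no loss of information. The reverse passage is free, because a smooth \pt\ is automatically a globally trivial transport functor, so no analogous trivialization is needed there, and everything else is bookkeeping guaranteed to go through because $\fo$ is a strict isomorphism commuting with the pullbacks.
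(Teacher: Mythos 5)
Your proposal is correct and follows essentially the same route as the paper: the paper likewise invokes two-contractibility and Corollary 3.13 of \cite{schreiber3} to replace $g_0,\psi_0,f_0$ by equivalent smooth representatives, applies the strict 2-functor $\fo$ levelwise so that the descent conditions \erf{2}, \erf{3}, \erf{5}, \erf{4} and \erf{6} translate into the cocycle identities, and obtains the inverse from the canonical strict inverse of $\fo$ provided by Theorem \ref{th1}. Your explicit observation that data in the image of $\fo^{-1}$ is automatically a smooth descent object (since $\holo(g)$ is then a globally trivial transport functor) is left implicit in the paper, but it is the same argument.
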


\begin{proof}
By \cite[Corollary 3.13]{schreiber3} transport functors over contractible manifolds are naturally equivalent to smooth functors. This applies to the transport functors $\mathscr{F}(g)$  contained in a descent object
$(\mathrm{triv},g,\psi,f)$, and to the transport functor $\mathscr{F}(h)$ contained in a descent 1-morphism $(h,\varepsilon)$. Such natural equivalences induce a descent 1-morphism $(\mathrm{triv},g,\psi,f) \cong (\mathrm{triv},g_{\infty},\psi',f')$ and a descent 2-morphism $(h,\varepsilon) \cong (h_{\infty},\varepsilon')$ with $g_{\infty}$ and $h_{\infty}$ smooth. Since the embedding of smooth functors into transport functors is full, all the modifications $\psi'$, $f'$, $\varepsilon'$ are then automatically smooth. This shows that $\transsmooth{\id_{\mathcal{B}\mathfrak{G}}}{2}{}$ is equivalent to its subcategory $\mathcal{U}$. \end{proof}

\subsubsection{Explicit Local Data}

\label{sec:localdata}

In order to make the structure of the 2-category $\diffco{\mathfrak{G}}{2}{\pi}$ of degree two differential $\mathfrak{G}$-cocycles more transparent, we shall spell out all details in case that the surjective submersion $\pi$ comes from an  open cover $\mathfrak{V}=\left \lbrace V_i \right \rbrace$ of $M$. In particular, we express the diagrammatic equations  above in terms of actual equations, using the relation between the Lie 2-groupoid $\mathcal{B}\mathfrak{G}$ and the crossed module $(G,H,t,\alpha)$ established in  Section \ref{sec:features}.

Firstly, a differential $\mathfrak{G}$-cocycle  $((A,B),(g,\varphi),\psi,f)$ has the following smooth functions and differential forms:
\begin{enumerate}
\item[(a)] 
On every open set $V_i$, 
\begin{equation*}
\psi_i: V_i \to H
\quad\text{, }\quad
A_i \in \Omega^1(V_i,\mathfrak{g})
\quad\text{ and }\quad
B_i \in \Omega^2(V_i,\mathfrak{h})\text{.}
\end{equation*}

\item[(b)]
On every two-fold intersection $V_i \cap V_j$, 
\begin{equation*}
g_{ij}:V_i \cap V_j \to G
\quad\text{ and }\quad
\varphi_{ij} \in \Omega^1(V_i \cap V_j,\mathfrak{h})\text{.}
\end{equation*}
\item[(c)]
On every three-fold intersection $V_i \cap V_j \cap V_k$,
\begin{equation*}
f_{ijk}:V_i \cap V_j \cap V_k \to H\text{.}
\end{equation*}
\end{enumerate}  
The cocycle conditions are the following:
\begin{enumerate}
\item 
Over every open set $V_i$,
\begin{eqnarray}
\label{31}
\mathrm{d}A_i + [A_i \wedge A_i] &=& t_{*}(B_i)
\\
\label{31a}
g_{ii}&=&t(\psi_i)
\\
\varphi_{ii}&=&-(r^{-1}_{\psi_i} \circ\alpha_{\psi_i})_{*}(A_i) - \psi_i^{*}\bar\theta\text{.}\nonumber
\end{eqnarray}
\item
Over every two-fold intersection $V_i \cap V_j$,
\begin{eqnarray}
\nonumber
A_j &=&\mathrm{Ad}_{g_{ij}}(A_{i}) - g_{ij}^*\bar\theta- t_{*}(\varphi_{ij})
\\
\nonumber
B_j  &=& (\alpha_{g_{ij}})_{*}(B_i)- \alpha_{*}(A_j \wedge \varphi_{ij}) - \mathrm{d}\varphi_{ij} - [\varphi_{ij}\wedge \varphi_{ij}]
\\
\label{31b}
1&=&f_{ijj}  \psi_j = f_{iij} \, \alpha_{g_{ij}}(\psi_i)\text{.}
\end{eqnarray}
\item
Over every three-fold intersection $V_i \cap V_j \cap V_k$,
\begin{eqnarray*}
g_{ik} &=& t(f_{ijk})  g_{jk}g_{ij}
\\
\mathrm{Ad}_{f_{ijk}}(\varphi_{ik})&=& (\alpha_{g_{jk}})_{*}(\varphi_{ij}) + \varphi_{jk} + (r_{f_{ijk}}^{-1} \circ \alpha_{f_{ijk}})_{*}(A_k) + f_{ijk}^{*}\bar\theta\text{.}
\end{eqnarray*}

\item
Over every four-fold intersection $V_i \cap V_j \cap V_k \cap V_l$,
\begin{equation}
\label{31c}
f_{ikl} \alpha(g_{kl},f_{ijk})=f_{ijl}f_{jkl}\text{.}
\end{equation}
\end{enumerate} 
Additionally, the curvature of the differential cocycle is according to Definition \ref{def:curv} given by
\begin{equation*}
H_i := \mathrm{d}B_i + \alpha_{*}(A_i \wedge B_i) \in \Omega^3(V_i,\mathfrak{h})\text{.}
\end{equation*}

Secondly, a 1-morphism $((h,\varepsilon),\phi)$ between differential cocycles $((A,B),(g,\varphi),\psi,f)$ and $((A',B'),(g',\varphi'),\psi',f')$ has the following structure:
\begin{enumerate}
\item[(a)]
On every open set $V_i$,
\begin{equation*}
h_{i}:V_{i} \to G
\quad\text{ and }\quad
\phi_{i}\in\Omega^1(V,\mathfrak{h})\text{.}
\end{equation*}
\item[(b)]
On every two-fold intersection $V_i \cap V_j$,
\begin{equation*}
\epsilon_{ij}:V_i \cap V_j \to H\text{.}
\end{equation*}
\end{enumerate}
The following conditions have to be satisfied:
\begin{enumerate}
\item 
Over every open set $V_i$,
\begin{eqnarray}
\label{32}
B_i' &=& (\alpha_{h_i})_{*}(B_i) - \alpha_{*}(A_i' \wedge \phi_i) - \mathrm{d}\phi_i - [\phi_i \wedge \phi_i]
\\
\label{32a}
A'_i &=& \mathrm{Ad}_{h_i}(A_i) - t_{*}(\phi_i) - h_i^{*}\bar\theta
\\
\label{32b}
\psi_i'&=& \epsilon_{ii}\alpha(h_i,\psi_i)\text{.}
\end{eqnarray}

\item
Over every two-fold intersection $V_i \cap V_j$,
\begin{eqnarray}
\label{32c}
g_{ij}' &=& t(\epsilon_{ij})h_jg_{ij}h_i^{-1}
\\
\label{32d}
\varphi_{ij}' &=& \mathrm{Ad}_{\epsilon_{ij}}((\alpha_{h_j})_{*}(\varphi_{ij}) + \phi_j) 
- (\alpha_{g_{ij}'})_{*}(\phi_i) - (r_{\varepsilon_{ij}}^{-1} \circ \alpha_{\epsilon_{ij}})_{*}(A'_{j}) - \epsilon_{ij}^{*}\bar\theta
\end{eqnarray}
\item
Over every three-fold
 intersection $V_i \cap V_j \cap V_k$,
\begin{equation}
\label{32f}
f_{ijk}' = \epsilon_{ik}\alpha(h_k,f_{ijk})\alpha(g'_{ik},\epsilon_{ij}^{-1})\epsilon_{jk}^{-1}\text{.}
\end{equation}
\end{enumerate}
Finally, a 2-morphism  $E$ between differential cocycles has, for any open set $V_i$, a smooth function $E_i: V_{i} \to H$  such that on every open set $V_i$
\begin{eqnarray*}
h'_i &=& t(E_i) h_i
\\
\phi'_i &=& \mathrm{Ad}_{E_i} (\phi_i)  - (r^{-1}_{E_i} \circ \alpha_{E_i})_{*}(A'_i)-E_i^{*}\bar\theta
\end{eqnarray*}
and, on every 2-fold intersection $V_i \cap V_j$,
\begin{equation*}
\epsilon_{ij}' = \alpha(g_{ij}',E_i) \epsilon_{ij}E_j^{-1}\text{.}
\end{equation*}

Concluding this discussion we note the following normalization lemma, which we need for the discussion of surface holonomy.
It   can be seen as a 
variant of the maybe familiar normalization result for \v Cech
cocycles.

\begin{lemma}
\label{equiavelncenormalized}
Every differential $\mathfrak{G}$-cocycle with respect to an open cover $\left \lbrace V_i \right \rbrace_{i\in I}$ is 1-isomorphic to a differential $\mathfrak{G}$-cocycle  with 
\begin{equation*} 
\psi_i=1
\quomma
g_{ii}=1
\quomma
f_{iij}=f_{ijj}=1
\quand
f_{iji}=y_{ij}^{-1}\alpha(x_{ij},y_{ij})
\end{equation*}  
for all $i,j\in I$, and  elements $x_{ij}\in G$ and $y_{ij}\in H$. In particular, if $\mathfrak{G}=\mathcal{B}A$ for an abelian Lie group $A$, then $f_{iji}=1$. 
\end{lemma}

\begin{proof}
Suppose $((A,B),(g,\varphi),\psi,f)$ is a differential $\mathfrak{G}$-cocycle with respect to $\left \lbrace V_i \right \rbrace$. It is easy to see that one can always pass to a1-isomorphic cocycle with $\psi_i=1$, $g_{ii}=1$, and $f_{iij}=f_{ijj}=1$. 
\begin{comment}
We look at  the triple $(\phi_i,h_i,\varepsilon_{ij})$ with the differential forms $\phi_i := 0 \in \Omega^1(V_i)$, smooth maps $h_{i} := 1$ on $V_i$, and smooth maps
\begin{equation*}
\varepsilon_{ij} := \begin{cases}\psi_i^{-1} & i=j \\
1 & i \neq j \\
\end{cases}
\end{equation*}
defined on $V_i \cap V_j$. Then we \emph{define} a new differential $\mathfrak{G}$-cocycle $((A',B'),(g',\varphi'),\psi',f')$ via equations \erf{32} to \erf{32f}. We have $\psi'_i = \epsilon_{ii}\alpha(h_i,\psi_i)=\varepsilon_{ii}\psi_i^{-1}=1$. Then,  $g'_{ii}=1$  follows from \erf{31a}, and $f_{iij}'=f_{ijj}'=1$ follows from \erf{31b}. \end{comment}
In order to achieve the remaining condition for $f_{iji}$, we choose a total order on the index set $I$ of the open cover, and look at  smooth maps
\begin{equation*}
\varepsilon_{ij} := \begin{cases}1 & i\leq j \\
f_{iji} & i>j \\
\end{cases}
\end{equation*}
defined on $V_i \cap V_j$. Then we define yet another differential $\mathfrak{G}$-cocycle $((A',B'),(g',\varphi'),\psi',f')$ via  equations \erf{32} to \erf{32f}, and it is straightforward to see that this one has the claimed form.
\begin{comment}
We still have $\psi_i''=1$ and hence still $g_{ii}''=f_{iij}''=f_{ijj}=1$. From \erf{31c} we conclude $\alpha(g_{ij},f_{iji})=f_{jij}$.
Further, we get from \erf{32f}
\begin{equation*}
f_{iji}'' = f'_{iji}\epsilon_{ij}^{-1}\epsilon_{ji}^{-1}\text{.}
\end{equation*}
For $i>j$, this gives $f''_{iji}= f'_{iji}\epsilon_{ij}^{-1}\epsilon_{ji}^{-1} = f_{iji}'f'_{iji}^{-1}=1$, so that $f_{iji}=y_{ij}^{-1}\alpha(x_{ij},y_{ij})$ for $x_{ij}=y_{ij}=1$. For $i < j$, this gives $f_{iji}'' = f'_{iji}f'_{jij}^{-1}=f_{iji}'\alpha(g_{ij},f_{iji}^{-1})$, so that $f_{iji}=y_{ij}^{-1}\alpha(x_{ij},y_{ij})$ for $x_{ij}=g_{ij}$ and $y_{ij}=f_{iji}^{-1}$.
\end{comment}
\end{proof}

\subsubsection{Non-abelian Differential Cohomology}

The 2-categories $\diffco{\mathfrak{G}}{2}{\pi}$ of degree two differential $\mathfrak{G}$-cocycles form a direct system for surjective submersions over $M$, and so do the sets $\hc 0 (\diffco{\mathfrak{G}}{2}{\pi})$ of isomorphism classes of objects.  

\begin{definition}
\label{def:nonco}
The direct limit
\begin{equation*}
\hat H^2(M,\mathfrak{G}) := \lim_{\overrightarrow{\pi}} \hc 0(\diffco{\mathfrak{G}}{2}{\pi})
\end{equation*} 
is called the degree two \emph{differential non-abelian cohomology} of $M$ with values in $\mathfrak{G}$.
\end{definition}

The terminology \quot{non-abelian differential cohomology} is motivated by the following observation. If one drops all differential forms from the local data described in Section \ref{sec:localdata}, and only keeps the smooth functions, the corresponding limit coincides with  the \emph{non-abelian cohomology} $H^2(M,\mathfrak{G})$, as it appears for instance in \cite{giraud,breen2,bartels,wockel1}. Thus, Definition \ref{def:nonco} extends ordinary non-abelian cohomology by differential form data. 

We remark that by Lemma \ref{equiavelncenormalized} and the fact that every surjective submersion can be refined by an open cover, one can define non-abelian differential cohomology by only  using cocycles that are normalized in the sense of Lemma \ref{equiavelncenormalized}. 

Since every surjective submersion can be refined to a two-contractible one (for example, by a good open cover), it also suffices to take the limit over the two-contractible surjective submersions. Then, Proposition \ref{prop1} and Theorem \ref{th4} imply via \erf{mainseq} the following classification result.

\begin{theorem}
\label{th:class}
Let $\mathfrak{G}$ be a Lie 2-group, and let $i: \mathcal{B}\mathfrak{G} \to T$ be an equivalence of 2-categories. Then, there is a bijection
\begin{equation*}
\hc 0 \transport{}{2}{\mathrm{Gr}}{T} \cong \hat H^2(M,\mathfrak{G})\text{.}
\end{equation*}
In other words, transport 2-functors on $M$ with $\mathcal{B}\mathfrak{G}$-structure are classified up to isomorphism by the non-abelian differential cohomology of $M$ with values in $\mathfrak{G}$.
\end{theorem}

Let us specify two particular examples of differential non-abelian cohomology which have appeared before in the literature:
\begin{enumerate}
\item 
The Lie 2-group $\mathfrak{G} = \mathcal{B}S^1$. We leave it as an easy exercise to the reader to check  that our differential non-abelian cohomology is precisely degree two Deligne cohomology,
\begin{equation*}
\hat H^2(M,\mathcal{B}S^1) = H^2(M,\mathcal{D}(2))\text{.}
\end{equation*}
Deligne cohomology \cite{brylinski1} is a well-known local description of abelian gerbes with connection. 

\item
The Lie 2-group $\mathfrak{G}=\mathrm{AUT}(H)$ for $H$ some ordinary Lie group $H$. We also leave it to the reader to check that our differential cocycles are precisely  the local description of connections in  non-abelian gerbes given by Breen and Messing \cite{breen1} (see Remark \ref{rem2} below). Thus, we have an equality
\begin{equation*}
\hat H^2(M,\mathrm{AUT}(H)) = \bigset{5.4cm}{Equivalence classes of local data\\of Breen-Messing $H$-gerbes\\with connection over $M$}  \text{.}
\end{equation*}
\end{enumerate}
Summarizing, the  theory of transport 2-functors covers both Deligne cocycles and Breen-Messing cocycles.

\begin{remark}
\label{rem2}
We remark that condition \erf{31} between the 1-curving $A$ and the 2-curving $B$  of a differential $\mathfrak{G}$-cocycle is present neither  in Breen-Messing gerbes \cite{breen1} nor in the non-abelian bundle gerbes of  \cite{aschieri}, which we discuss in Section \ref{sec3_3}. Breen and Messing call the local 2-form
\begin{equation*}
t_{*}(B_i) - \mathrm{d}A_i - [A_i \wedge A_i]
\end{equation*}
which is here zero by \erf{31}, the \emph{fake curvature} of the gerbe.
In this terminology, transport 2-functors only cover Breen-Messing gerbes with vanishing fake curvature. 

The crucial point is here that unlike transport 2-functors, neither  Breen-Messing gerbes nor   non-abelian bundle gerbes have concepts of holonomy or parallel transport. And indeed, equation \erf{31} comes from an important consistency condition on this parallel transport, namely from the target-source matching condition for the transport 2-functor, which makes it possible to decompose parallel transport into pieces.  So we understand equation \erf{31} as an \emph{integrability condition} which is necessarily for a consistent parallel transport. In other words, vanishing fake curvature is not specifying a subclass of connections, it is a \emph{necessary condition} every connection has to satisfy.

\end{remark}

\subsection{Abelian Bundle Gerbes
with Connection}

\label{sec3_2}

In this section we consider the target 2-category $T=\mathcal{B}(S^1\text{-}\mathrm{Tor})$,  the monoidal category of $S^1$-torsors viewed as a 2-category with a single object \cite[Example A.2]{schreiber6}. Associated to this 2-category is the 2-functor 
\begin{equation*}
i_{S^1}: \mathcal{BB}S^1 \to \mathcal{B}(S^1\text{-}\mathrm{Tor})
\end{equation*}
that sends the single 1-morphism of $\mathcal{BB}S^1$ to the circle, viewed as an $S^1$-torsor over itself. The main result of this section is:
\begin{theorem}
\label{th:bgrb}
There is an equivalence
\begin{equation*}
\mathrm{Trans}_{\mathcal{BB}S^1}(M,\mathcal{B}(S^1\text{-}\mathrm{Tor})) \cong \mathfrak{BGrb}^{\nabla}(M) 
\end{equation*}
between the 2-category of transport 2-functors on $M$ with values in $\mathcal{B}(S^1\text{-}\mathrm{Tor})$ and with $\mathcal{BB}S^1$-structure, and the 2-category of  bundle gerbes with connection over $M$.
\end{theorem}

Let us first recall the definition of  bundle gerbes following \cite{murray,murray2,stevenson1,waldorf1}. For a surjective submersion $\pi:Y \to M$, we first define the following 2-category $\mathfrak{BGrb}^{\nabla}(\pi)$:

\begin{enumerate}
\item
An object is a tuple $(B,L,\omega,\mu)$ consisting of  a 2-form $B\in \Omega^{2}(Y)$, a principal $S^{1}$-bundle $L$ with connection $\omega$ over $Y^{[2]}$ of curvature $\mathrm{curv}(\omega) = \pi_1^{*}B - \pi_2^{*}B$, and an associative, connection-preserving bundle isomorphism 
\begin{equation*}
\mu: \pi_{23}^{*}L \otimes \pi_{12}^{*}L \to \pi_{13}^{*}L
\end{equation*}
over $Y^{[3]}$. 

\item
A 1-morphism $(B,L,\omega,\mu) \to (B',L',\omega',\mu')$, also known as   \emph{stable isomorphism}, is a principal $S^{1}$-bundle $A$ with connection $\varsigma$ over $Y$ of curvature $\mathrm{curv}(\varsigma)=B - B'$ together with a connection-preserving bundle isomorphism 
\begin{equation*}
\alpha:  \pi_2^{*}A \otimes L \to L' \otimes \pi_1^{*}A
\end{equation*}
over $Y^{[2]}$, such that the diagram
\begin{equation}
\label{43}
\alxydim{@C=2cm}{\pi_3^{*}A \otimes \pi_{23}^{*}L \otimes \pi_{12}^{*}L \ar[r]^-{\id\otimes\mu} \ar[d]_{\pi_{23}^{*}\alpha \otimes \id} & \pi_3^{*}A \otimes  \pi_{13}^{*}L  \ar[dd]^{\pi_{13}^{*}\alpha} \\ \pi_{23}^{*}L' \otimes \pi_{2}^{*}A \otimes L \ar[d]_{\id\otimes \pi_{12}^{*}\alpha} & \\ \pi_{23}^{*}L' \otimes \pi_{12}^{*}L'\otimes\pi_1^{*}A   \ar[r]_-{\mu'\otimes\id} & \pi_{13}^{*}L' \otimes \pi_{1}^{*}A}
\end{equation}
of bundle isomorphisms over $Y^{[3]}$ is commutative. 

\item
A 2-morphism $(A,\varsigma,\alpha) \Rightarrow (A',\varsigma',\alpha')$ is a connection-preserving bundle isomorphism $\varphi: A \to A'$ over $Y$, such that the diagram
\begin{equation}
\label{45}
\alxydim{@R=1.3cm@C=1.3cm}{\pi_2^{*}A \otimes L \ar[d]_{\pi_2^{*}\varphi \otimes \id_L} \ar[r]^{\alpha} & L' \otimes \pi_1^{*}A \ar[d]^{\id_{L'} \otimes \pi_1^{*}\varphi} \\ \pi_2^{*}A' \otimes L \ar[r]_{\alpha'} & L' \otimes \pi_1^{*}A'}
\end{equation}
of bundle isomorphisms over $Y^{[2]}$ is commutative.
\end{enumerate}
The full 2-category of bundle gerbes with connection  over $M$ is then defined as the direct limit
\begin{equation*}
\mathfrak{BGrb}^{\nabla}(M) := \lim_{\overrightarrow{\pi}} \mathfrak{BGrb}^{\nabla}(\pi)\text{.}
\end{equation*}

The strategy to prove Theorem \ref{th:bgrb} is to first establish the equivalence on a local level: 

\begin{proposition}
\label{prop:bgrb}
For every surjective submersion $\pi:Y \to M$ there is a  surjective equivalence of 2-categories:
\begin{equation*}
\transsmooth{i_{S^1}}{2}{\Sigma\Sigmna U(1)} \cong \mathfrak{BGrb}^{\nabla}(\pi)\text{.}
\end{equation*}
\end{proposition}

A 2-functor $\transsmooth{i_{S^1}}{2}{\Sigma\Sigmna U(1)} \to \mathfrak{BGrb}^{\nabla}(\pi)$ realizing the claimed equivalence is defined in the following way. For a smooth descent object $(\mathrm{triv},g,\psi,f)$,  the smooth 2-functor $\mathrm{triv}:\mathcal{P}_2(Y) \to \mathcal{B}\mathcal{B}S^1$ defines by Theorem \ref{th1}  a 2-form $B\in\Omega^2(Y)$, this is the first ingredient of the bundle gerbe. The \pt\ $g$ yields a transport functor 
\begin{equation*}
\holo(g):\mathcal{P}_1(Y^{[2]})
\to \Lambda_{i_{S^1}}\mathcal{B}(S^1\text{-}\mathrm{Tor})
\end{equation*}
with $\Lambda\mathcal{B\mathcal{B}}S^1$-structure. 
Despite of the heavy notation, the following lemma  translates this functor into familiar language. 
\begin{lemma}
\label{lem6}
For $X$ a smooth manifold, there is a canonical surjective equivalence of monoidal categories
\begin{equation*}
\sbunX \cong \transportX{2}{1}{_{\Lambda \mathcal{BB}S^1}}{\Lambda_{i_{S^1}}\mathcal{B}(S^1\text{-}\mathrm{Tor})}
\end{equation*}
between principal $S^1$-bundles with connection and transport functors with $\Lambda\mathcal{BB}S^1$-structure.
\end{lemma}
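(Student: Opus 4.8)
The plan is to reduce the statement to Theorem \ref{th3} applied to the Lie group $G=S^1$, by transporting both the structure groupoid and the target category through the two identifications established immediately above. Theorem \ref{th3} provides a canonical surjective equivalence between principal $S^1$-bundles with connection over $X$ and transport functors on $X$ with $\mathcal{B}S^1$-structure valued in $S^1\text{-}\mathrm{Tor}$, that is
\begin{equation*}
\sbunX \cong \mathrm{Trans}^1_{\mathcal{B}S^1}(X,S^1\text{-}\mathrm{Tor})\text{.}
\end{equation*}
It therefore suffices to produce a canonical monoidal equivalence between $\mathrm{Trans}^1_{\Lambda\mathcal{BB}S^1}(X,\Lambda_{i_{S^1}}\mathcal{B}(S^1\text{-}\mathrm{Tor}))$ and $\mathrm{Trans}^1_{\mathcal{B}S^1}(X,S^1\text{-}\mathrm{Tor})$, and then to compose the two. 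The input data for a transport functor is a structure groupoid, a target category, and a realizing functor between them; so the task is to match these three pieces of data on the two sides.

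First I would assemble the two identifications into a single commuting square of realizing functors. On the structure groupoids we have the strict equality $\Lambda\mathcal{BB}S^1=\mathcal{B}S^1$, and on the targets we have the monoidal equivalence \erf{44}, namely $\Lambda_{i_{S^1}}\mathcal{B}(S^1\text{-}\mathrm{Tor})\cong S^1\text{-}\mathrm{Tor}$. The realizing functor on the left-hand transport category is $\Lambda i_{S^1}\colon\Lambda\mathcal{BB}S^1\to\Lambda_{i_{S^1}}\mathcal{B}(S^1\text{-}\mathrm{Tor})$, while on the right it is the canonical functor $i\colon\mathcal{B}S^1\to S^1\text{-}\mathrm{Tor}$ sending the single object to $S^1$ regarded as an $S^1$-torsor over itself. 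The key point to verify is that, under the equality $\Lambda\mathcal{BB}S^1=\mathcal{B}S^1$ and the equivalence \erf{44}, the functor $\Lambda i_{S^1}$ corresponds to $i$. This is a direct unwinding of definitions: on the single object, $\Lambda i_{S^1}$ produces the $1$-morphism $S^1$ of $\mathcal{B}(S^1\text{-}\mathrm{Tor})$, which \erf{44} identifies with the $S^1$-torsor $S^1$; and a group element $g\in S^1$, viewed as a $2$-morphism of $\mathcal{BB}S^1$, is sent to the equivariant map given by multiplication with $g$, which matches $i(g)$ after the inversion $f\mapsto f^{-1}$ built into \erf{44} is tracked through.

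Given this compatible square of realizing data, the change-of-target and change-of-structure-groupoid operations (the analogues, one categorical level down, of the operations listed in Section \ref{sec6_3}) carry transport functors with $\Lambda\mathcal{BB}S^1$-structure to transport functors with $\mathcal{B}S^1$-structure and back. The crucial observation is that the equivalences involved are equivalences of Lie, respectively diffeological, categories, so they preserve the smoothness of local trivializations and hence the defining property of being a transport functor. Composing the resulting equivalence with Theorem \ref{th3} yields the claimed equivalence. Surjectivity follows from the surjectivity asserted in Theorem \ref{th3} together with the essential surjectivity of the intermediate equivalences, and the monoidal structure is inherited from the pointwise tensor products on both transport categories, since \erf{44} is a \emph{monoidal} equivalence and $\Lambda\mathcal{BB}S^1=\mathcal{B}S^1$ respects the group structure.

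The only genuine obstacle is the verification that the realizing functors match, i.e. that $\Lambda i_{S^1}$ corresponds to $i$ compatibly with the monoidal and diffeological structures; in particular one must check that the inversion $f\mapsto f^{-1}$ appearing in \erf{44} is consistent with the composition law of $\Lambda_{i_{S^1}}\mathcal{B}(S^1\text{-}\mathrm{Tor})$, so that the identification of targets is genuinely monoidal and not merely an equivalence of underlying categories. Once this commuting square is in place, everything else is a formal consequence of the functoriality of the $\mathrm{Trans}$ construction under equivalences of realizing data, combined with Theorem \ref{th3}.
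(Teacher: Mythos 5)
Your proposal is correct and follows essentially the same route as the paper: the paper establishes the lemma precisely by combining the identification $\Lambda\mathcal{BB}S^1=\mathcal{B}S^1$, the canonical monoidal equivalence \erf{44}, and Theorem \ref{th3} for $G=S^1$. The compatibility checks you spell out (matching the realizing functors $\Lambda i_{S^1}$ and $i$, tracking the inversion $f\mapsto f^{-1}$ through composition, and preservation of smooth descent data) are exactly the details the paper leaves implicit in the phrase ``In combination with Theorem \ref{th3} we have''.
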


\begin{proof}
First of all, we have evidently
$\Lambda\mathcal{B}\mathcal{B}S^1 = \mathcal{B}S^1$. Second, there is a canonical equivalence of categories
\begin{equation}
\label{44}
\Lambda_{i_{S^1}}\mathcal{B}(S^1\text{-}\mathrm{Tor}) \cong S^1\text{-}\mathrm{Tor}\text{.}
\end{equation}
This comes from the fact that an object is in both categories an $S^1$-torsor. A morphism between $S^1$-torsors $V$ and $W$ in $\Lambda_{i_{S^1}}\mathcal{B}(S^1\text{-}\mathrm{Tor})$ is by definition a 2-morphism  
\begin{equation*}
\alxydim{@=1.2cm}{\ast \ar[r]^{S^1} \ar[d]_{V} & \ast \ar[d]^{W} \ar@{=>}[dl]|*+{f} \\ \ast \ar[r]_{S^1} & \ast }
\end{equation*}
in $\mathcal{B}(S^1\text{-}\mathrm{Tor})$, and this is in turn an $S^1$-equivariant map
\begin{equation*}
f: W \otimes S^1 \to S^1 \otimes V\text{.}
\end{equation*}
It can be identified canonically with an $S^1$-equivariant map $f^{-1}: V \to W$, i.e. a morphism in $S^1\text{-}\mathrm{Tor}$. It is straightforward to see that \erf{44} is even a monoidal equivalence. Then, the claim is proved by Theorem \ref{th3}.
\end{proof}

 Now, via Lemma \ref{lem6}  the transport functor $\holo(g)$ is  a principal $S^1$-bundle $L$ with connection $\omega$ over $Y^{[2]}$. This circle bundle will be the second ingredient of the bundle gerbe.
\begin{lemma}
\label{lem8}
The curvature of the connection $\nabla$ on the circle bundle $L$ satisfies \begin{equation*}
\mathrm{curv}(\omega)=\pi_1^{*}B - \pi_2^{*}B\text{.}
\end{equation*}
\end{lemma}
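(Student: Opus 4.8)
The plan is to reduce everything to the differential-form dictionary of Theorem \ref{th1} together with relation \erf{55}. First I would record what the smooth descent datum $(\mathrm{triv},g,\psi,f)$ looks like in terms of forms. Since the structure Lie $2$-group is $\mathcal{B}S^1$, coming from the crossed module $S^1 \to 1$, the group $G$ is trivial and $\mathfrak{g}=0$; hence Theorem \ref{th1} identifies $\mathrm{triv}$ with a pair $(A,B)=\fo(\mathrm{triv})$ in which $A=0$ and $B\in\Omega^2(Y,\mathfrak{h})=\Omega^2(Y)$ is exactly the $2$-form appearing in the statement. The smoothness of the descent object means that $\holo(g)$ is a transport functor; replacing $g$ up to equivalence by a smooth \pt\ $g_\infty:\pi_1^{*}\mathrm{triv}\to\pi_2^{*}\mathrm{triv}$ as in Section \ref{sec3_1}, the list of forms there assigns to $g_\infty$ a map $Y^{[2]}\to G=\lbrace 1\rbrace$, which is trivial, and a $1$-form $\varphi\in\Omega^1(Y^{[2]},\mathfrak{h})=\Omega^1(Y^{[2]})$.

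Next I would identify $\varphi$ with a local connection $1$-form of the circle bundle $(L,\omega)$. Under the equivalence of Lemma \ref{lem6}, the transport functor $\holo(g)$ \emph{is} the bundle $L$ with connection $\omega$; locally, where the reconstructed bundle is trivialized, $\varphi$ is its connection $1$-form. Since $\mathfrak{h}$ is abelian, the curvature of such a connection is $\mathrm{curv}(\omega)=\mathrm{d}\varphi$ on each trivializing patch (this is the $S^1$-specialization of the curvature formula behind Theorem \ref{th3}).

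Finally I would invoke relation \erf{55} for the \pt\ $g_\infty:\pi_1^{*}\mathrm{triv}\to\pi_2^{*}\mathrm{triv}$, whose source form is $\pi_1^{*}B$ and whose target form is $\pi_2^{*}B$. In the abelian situation $A=A'=0$ kills the term $\alpha_{*}(A'\wedge\varphi)$, the bracket $[\varphi\wedge\varphi]$ vanishes, and the trivial map $g\equiv 1$ makes $(\alpha_g)_{*}$ the identity, so \erf{55} collapses to $\pi_2^{*}B+\mathrm{d}\varphi=\pi_1^{*}B$, that is $\mathrm{d}\varphi=\pi_1^{*}B-\pi_2^{*}B$. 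Combining this with $\mathrm{curv}(\omega)=\mathrm{d}\varphi$ proves the claim on every trivializing patch; since both $\mathrm{curv}(\omega)$ and $\pi_1^{*}B-\pi_2^{*}B$ are globally defined $2$-forms on $Y^{[2]}$, the identity holds globally. The only genuine subtlety — and hence the step I would watch most carefully — is this last matching and globalization: $Y^{[2]}$ is not assumed two-contractible here, so $\varphi$ exists only locally, and one must check that the local form $\varphi$ produced by the smoothness of $g$ really coincides with the connection form of the bundle $L$ furnished by Lemma \ref{lem6}, so that the local equalities patch together to the asserted global curvature identity.
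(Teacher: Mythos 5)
Your proposal follows essentially the same route as the paper: specialize the form dictionary of Theorem \ref{th1} and relation \erf{55} to the crossed module $S^1 \to 1$ (so $A=A'=0$, $g\equiv 1$, and all bracket and action terms vanish), obtaining $\mathrm{d}\varphi = \pi_1^{*}B-\pi_2^{*}B$, and then identify $\varphi$ with a local connection $1$-form of $L$ so that $\mathrm{d}\varphi = \mathrm{curv}(\omega)$. Two remarks. First, your opening step --- replacing $g$ \emph{globally} by a smooth \pt\ $g_\infty$ on all of $Y^{[2]}$ --- is not available here: that replacement (Corollary 3.13 of \cite{schreiber3}) requires contractible connected components, which is precisely the two-contractibility hypothesis of Section \ref{sec4_1} and is \emph{not} assumed in Lemma \ref{lem8}; your closing paragraph correctly retracts this, and the argument must indeed be run on an open cover $\lbrace U_\alpha \rbrace$ of $Y^{[2]}$. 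Second, the ``matching'' subtlety you flag --- that the local $1$-form produced by smoothness of $g$ really is a local connection form of the bundle $L$ furnished by Lemma \ref{lem6} --- is resolved in the paper by a specific choice: one picks local trivializations $(\widetilde{\mathrm{triv}}_\alpha,\tilde t_\alpha)$ of the transport functor $\holo(g)$ that lie in the image of $\holo$, i.e. $\widetilde{\mathrm{triv}}_\alpha = \holo(\rho_\alpha)$ and $\tilde t_\alpha = \holo(t_\alpha)$ for smooth \pt s $\rho_\alpha\colon \pi_1^{*}\mathrm{triv}|_{U_\alpha} \to \pi_2^{*}\mathrm{triv}|_{U_\alpha}$ and modifications $t_\alpha\colon g|_{U_\alpha}\Rightarrow \rho_\alpha$. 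With this choice the $1$-form $A_\alpha$ of the smooth functor $\widetilde{\mathrm{triv}}_\alpha$ equals the $1$-form $\varphi_\alpha$ of $\rho_\alpha$ on the nose, so $\mathrm{curv}(\omega)=\mathrm{d}A_\alpha=\mathrm{d}\varphi_\alpha=\pi_1^{*}B-\pi_2^{*}B$ on each $U_\alpha$, and the globalization argument you give (both sides are globally defined $2$-forms on $Y^{[2]}$) finishes the proof exactly as in the paper.
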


\begin{proof}
Let $U_{\alpha}$ be open sets covering $Y^{[2]}$, and let $(\widetilde{\mathrm{triv}},\tilde t)$ be a local $i_{S^1}$-trivialization of the transport functor $\holo(g)$ consisting of smooth functors $\widetilde{\mathrm{triv}}_{\alpha}: \mathcal{P}_1(U_{\alpha}) \to \mathcal{B}S^1$ and natural transformations
\begin{equation*}
\tilde t_{\alpha}:  \holo(g)|_{U_{\alpha}} \to (\widetilde{\mathrm{triv}}_{\alpha})_{i_{S^1}}\text{.}
\end{equation*}
We observe that the functors $\widetilde{\mathrm{triv}}_{\alpha}$ and the natural transformation $\tilde t_{\alpha}$ lie in the image of the functor $\holo$, such that there exist smooth \pt s $\rho_{\alpha}: \pi_1^{*}\mathrm{triv}|_{U_{\alpha}} \to \pi_2^{*}\mathrm{triv}|_{U_{\alpha}}$ and modifications $t_{\alpha}: g|_{U_{\alpha}} \Rightarrow \rho_{\alpha}$ with 
\begin{equation*}
\widetilde{\mathrm{triv}}_{\alpha} = \holo(\rho_{\alpha})
\quad\text{ and }\quad
\tilde t_{\alpha} = \holo(t_{\alpha})\text{.}
\end{equation*}
As found in \cite{schreiber5} and reviewed in Section \ref{sec3_1} of the present article, associated to the smooth \pt\ $\rho_{\alpha}$ is a 1-form $\varphi_{\alpha}\in \Omega^1(U_{\alpha})$,
and equation \erf{55} infers in the present situation
\begin{equation*}
\pi_{1}^{*}B - \pi_2^{*}B =  \mathrm{d}\varphi_{\alpha}\text{.}
\end{equation*}
It remains to trace back the relation between $\varphi_{\alpha}$ and the curvature of the connection $\omega$ on circle bundle $L$. Namely, if $A_{\alpha}$ is the 1-form corresponding to the smooth functor $\widetilde{\mathrm{triv}}_{\alpha}$, we have
\begin{equation*}
A_{\alpha} = \varphi_{\alpha}
\quad\text{ and }\quad
\mathrm{d}A_{\alpha} = \mathrm{curv}(\omega)\text{.}
\end{equation*}
This shows the claim. 
\end{proof}

In order to complete the construction of the bundle gerbe, we note that the modification $f\maps \pi_{23}^{*}g \circ \pi_{12}^{*}g \Rightarrow \pi_{13}^{*}g$ induces an isomorphism 
\begin{equation*}
\holo(f): \pi_{23}^{*}\holo(g) \otimes \pi_{12}^{*}\holo(g) \to \pi_{13}^{*}\holo(g)
\end{equation*}
of transport functors; again by Lemma \ref{lem6} this defines an isomorphism
\begin{equation*}
\mu : \pi_{12}^{*}L \otimes \pi_{23}^{*}L \to \pi_{13}^{*}L 
\end{equation*}
of circle bundles with connection, which is the last ingredient of the bundle gerbe. The coherence axiom of \cite[Definition 2.2.1]{schreiber6}
implies the associativity condition on $\mu$. This shows that $(B,L,\nabla,\mu)$ is a bundle gerbe with connection. The descent datum $\psi$ has been forgotten by this construction.

Using Lemma \ref{lem6} in the same way as just demonstrated it is easy to assign bundle gerbe 1-morphisms to descent 1-morphisms and bundle gerbe 2-morphisms to descent 2-morphisms. Here the  coherence conditions of \cite[Definitions 2.2.2 and 2.2.3]{schreiber6}  translate  into the commutative diagrams \erf{43} and \erf{45}. The composition  for 1-morphisms and 2-morphisms  of bundle gerbes (which we have not carried out above) is precisely reproduced by the composition laws of the descent 2-category $\transsmooth{i_{S^1}}{2}{}$. This defines the 2-functor of Proposition \ref{prop:bgrb}. 

It is  evident that this 2-functor is an equivalence of 2-categories, since all manipulations we have made are equivalences according to Lemma \ref{lem6} and Theorem \ref{th1}. It remains to remark that the descent datum $\psi$ can be reproduced in a canonical way from a given bundle gerbe using the existence of dual circle bundles, see Lemma 1 in \cite{waldorf1}.
This finishes the proof of Proposition \ref{prop:bgrb}.

It is clear that the equivalence of Proposition \ref{prop:bgrb} is compatible with the refinement of surjective submersions, so that it given an equivalence in the direct limits:
\begin{equation*}
\mathfrak{Des}^2(i_{S^1})^{\infty}_M
 \cong \mathfrak{BGrb}^{\nabla}(M)\text{.}
\end{equation*}
With Theorem \ref{th4} this proves Theorem \ref{th:bgrb}. 

\subsection{Non-Abelian Bundle Gerbes with Connection}

\label{sec3_3}

In this section we generalize the equivalence of the previous section to so-called non-abelian bundle gerbes, as defined in \cite{aschieri}. The difference between abelian and non-abelian bundle gerbes can roughly be summarized as follows:
\begin{enumerate}

\item 
The Lie group $S^1$ is replaced by some connected Lie group $H$.

\item
Principal $S^1$-bundles are replaced by \emph{principal $H$-bibundles}, bundles with commuting left and right principal $H$-action. Morphisms between $H$-bibundles are bi-equivariant bundle morphisms.

\item
Connections on $S^1$-bundles are replaced by so-called \emph{twisted connections} on bibundles. 
\end{enumerate}
Non-abelian $H$-bundle gerbes with connection over $M$ form a 2-category which we denote by $H\text{-}\mathfrak{BGrb}^{\nabla}(M)$; some details are reviewed below. There is a sub-2-category $H\text{-}\mathfrak{BGrb}^{\nabla_{\!f\!f}}(M)$ of fake-flat non-abelian $H$-bundle gerbes, related to Remark \ref{rem2}.

On the side of transport 2-functors, we write $\hbitor$ for the category
  whose objects are smooth manifolds with commuting, free and transitive, smooth left and right
  $H$-actions, and whose morphisms are
  smooth bi-equivariant maps. Using the product over
  $H$ this is naturally a (non-strict) monoidal category, and we   write
  $\mathcal{B}(\hbitor)$ for the corresponding one-object (non-strict)
  2-category. Let $\mathrm{AUT}(H)$ be the Lie 2-group associated to $H$ via Example \ref{ex:2groups} \erf{ex:2groups:aut}. A 2-functor
\begin{equation}
\label{48}
i:\mathcal{B}\mathrm{AUT}(H) \to \mathcal{B}(\hbitor)
\end{equation}
is  defined as follows.
It sends a 1-morphism $\varphi \in \mathrm{Aut}(H)$ to the $H$-bitorsor $_\varphi H$
which is the group $H$ on which an element $h$ acts from the right by multiplication and from the left by multiplication with $\varphi(h)$. The compositors of $i$ are given by the canonical identifications
\begin{equation*}
c_{g_1,g_2}:{}_{g_1} H \;\times_H \;{}_{g_2} H
  \to
  {}_{g_2 g_1} H\text{,}
\end{equation*}
and the unitor is the identity. The 2-functor $i$
further sends a 2-morphism $h:\varphi_1 \Rightarrow \varphi_2$ to the bi-equivariant map
\begin{equation*}
_{\varphi_1}H \to _{\varphi_2}H:x \mapsto hx\text{.}
\end{equation*}
While the bi-equivariance with respect to the right action is obvious, the one with respect to the left action follows from the condition $\varphi_2(x)=h\varphi_1(x)h^{-1}$ we have for the 2-morphisms in $\mathrm{AUT}(H)$ for all $x\in H$.

The main result of this section is:

\begin{theorem}
\label{th:nagrb}
There is a canonical injective map
\begin{equation*}
\hc 0 (H\text{-}\mathfrak{BGrb}^{\nabla_{\!f\!f}}(M)) \to \hc 0 (\transport{}{2}{\mathcal{B}\mathrm{AUT}(H)}{\mathcal{B}(\hbitor)})
\end{equation*}
from isomorphism classes of fake-flat non-abelian $H$-bundle gerbes to transport 2-functors with values in $\mathcal{B}(H\text{-}\mathrm{BiTor})$ and $\mathcal{B}\AUT(H)$-structure. 
\end{theorem}

That we do not achieve a bijection or an  equivalence of 2-categories is due to the fact that the category of bibundles with twisted connections -- as defined in \cite{aschieri} -- oversees a class of morphisms, which in principle should be allowed
and arise naturally from the theory of transport functors.

Anyway, Theorem \ref{th:nagrb} provides non-trivial examples of transport 2-functors. The injective map of Theorem \ref{th:nagrb} is constructed as 
\begin{equation*}
H\text{-}\mathfrak{BGrb}^{\nabla_{\!f\!f}}(M) \to \transsmoothpi{i}{2}{\mathrm{Gr}}{}_M\cong \transport{}{2}{\mathcal{B}\mathrm{AUT}(H)}{\mathcal{B}(\hbitor)}\text{,}
\end{equation*}
where the first 2-functor is constructed in Proposition \ref{prop:locequivnagrb} and the equivalence is the one of Theorem \ref{th4}. 
We first review necessary material.

\subsubsection{Twisted Connections on Bibundles}

\label{bitorsors}

Let $P$ be a principal $H$-bibundle over $X$.
We  denote the left and right actions by an element $h\in H$ on  $P$ by $l_h$ and $r_h$, respectively. Measuring the difference between the left and the right action in the sense of  $l_h(p) = r_{g(h)}(p)$ furnishes a smooth map
\begin{equation}
\label{47}
g: P \to \mathrm{Aut}(H)\text{.}
\end{equation}
In the following we denote by $\mathfrak{aut}(H)$ the Lie algebra of $\mathrm{Aut}(H)$. Further, we denote by $t: H \to \mathrm{Aut}(H)$ the assignment of inner automorphisms and by $\alpha: \mathrm{Aut}(H) \times H \to H$ the evaluation. 

\begin{definition}[\cite{aschieri}]
Let $p : P \to X$ be a principal $H$-bibundle, and let 
  $A \in \Omega^1(X,\mathfrak{aut}(H))$ be a 1-form on the base space. An \emph{$A$-twisted connection on $P$} is
  a  1-form $\phi \in \Omega^1(P,\mathfrak{h})$ satisfying
\begin{equation}
\label{53}
    \phi_{\rho h}\left(
      \frac{\mathrm{d}}{\mathrm{d}t}(\rho h)
    \right)
    =
    \mathrm{Ad}_h^{-1}
    \left(
      \phi_\rho\left(\frac{\mathrm{d} \rho}{\mathrm{d} t}\right)
    \right)
    -
    (r_h \circ \alpha_h)_{*}\circ (p^*A)
    +
    \theta_h\left(
      \frac{\mathrm{d} h}{\mathrm{d}t}
    \right)
\end{equation}
for all smooth curves $\rho : [0,1] \to P$ and $h : [0,1] \to H$. A morphism $f: P \to P'$ respects $A$-twisted connections $\phi$ on $P$ and $\phi'$ on $P'$ if $f^{*}\phi'=\phi$.  
\end{definition}

We write $\mathfrak{Bibun}_H^{\nabla}(X,A)$ for the category of principal $H$-bibundles with $A$-twisted connection over $X$, and $\mathfrak{Bibun}_{H}^{\nabla}(X)$ for the union of these categories over all 1-forms $A$.

\begin{remark}
For $A = 0$ an $A$-twisted connection on $P$ is the same as an ordinary
connection on $P$ regarded  as a right principal
bundle. 
\begin{comment}
One can give an analogous definition of a twisted left connection. Then, a twisted right connection
gives rise to a twisted left connection, for a different
twist, and vice versa. This is discussed in detail in
\cite{aschieri}, but will be a manifest consequence of
the reformulation which we give later.
\end{comment}
\end{remark}

\begin{lemma}[{{\cite{aschieri}}}]
Let $A\in \Omega^1(X,\mathfrak{aut}(H))$ be a 1-form and let
 $p : P \to X$ be a principal $H$-bibundle. For any $A$-twisted connection
$\phi$ on $P$ there exists a unique 1-form $A_{\phi} \in \Omega^1(X,\mathfrak{aut}(H))$ satisfying
\begin{equation*}
    p^* A_{\phi} =
    \mathrm{Ad}_{g} (p^* A )
    - g^* \bar\theta
    -
    t_{*} \circ \phi
    \text{,}
\end{equation*}
where $g$ is the map from \erf{47}.
\end{lemma}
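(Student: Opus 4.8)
The plan is to prove the lemma by a descent argument. I will show that the $\mathfrak{aut}(H)$-valued $1$-form $\eta := \mathrm{Ad}_{g}(p^*A) - g^*\bar\theta - t_{*}\circ\phi \in \Omega^1(P,\mathfrak{aut}(H))$ on the total space is \emph{basic} with respect to the surjective submersion $p:P \to X$, i.e.\ horizontal and invariant under the principal right $H$-action whose orbits are exactly the fibres of $p$. Since $p$ is a surjective submersion, $p^*$ is injective on differential forms, so a basic form descends to a unique form on the base; this yields simultaneously the existence and the uniqueness of $A_\phi$ with $p^*A_\phi = \eta$. Both verifications rest on the defining relation \erf{53} of an $A$-twisted connection together with the equivariance of the map $g$ from \erf{47}, which one computes directly from $l_a(p) = r_{g(p)(a)}(p)$ and the commutativity of the two actions to be $g(p\cdot h) = t(h)^{-1}\,g(p)$ in $\mathrm{Aut}(H)$, where $t:H \to \mathrm{Aut}(H)$ is the inner-automorphism homomorphism. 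This is the only structural input about $g$ that the argument needs.

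First I would check horizontality. The vertical tangent vectors are the values at $t=0$ of the curves $t \mapsto \rho_0\cdot\exp(t\xi)$ with $\rho_0$ fixed and $\xi\in\mathfrak h$, giving the right-fundamental vector field $\xi^{R}$. Feeding $\rho\equiv\rho_0$ and $h(t)=\exp(t\xi)$ into \erf{53}, and using that $p^*A$ annihilates vertical vectors because such curves project to a constant in $X$, I obtain $\phi(\xi^{R}) = \xi$ and hence $(t_{*}\circ\phi)(\xi^R)=t_{*}\xi$. The equivariance $g(p\cdot h)=t(h)^{-1}g(p)$ gives $(g^*\bar\theta)(\xi^R)=-t_{*}\xi$, while $\mathrm{Ad}_{g}(p^*A)$ kills $\xi^R$ since $p^*A$ is already horizontal. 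The three contributions combine to $\eta(\xi^R)=0$, so $\eta$ is horizontal.

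Next I would check invariance under $r_h$ for constant $h\in H$. Relation \erf{53} then reads $r_h^*\phi = \mathrm{Ad}_h^{-1}\circ\phi - (r_h\circ\alpha_h)_{*}\circ p^*A$, and $r_h^*p^*A = p^*A$ because $p\circ r_h = p$. Combining these with $\mathrm{Ad}_{g\circ r_h} = \mathrm{Ad}_{t(h)^{-1}}\circ\mathrm{Ad}_{g}$ and the behaviour of $\bar\theta$ under the left translation implicit in $g\circ r_h = t(h)^{-1}g$, I would verify that all $h$-dependent terms cancel, so that $r_h^*\eta = \eta$. This cancellation is precisely the transformation law \erf{57} for the $1$-form part of a \pt, which is exactly what makes the particular combination defining $\eta$ descend. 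With horizontality and invariance in hand, the descent criterion produces the required $A_\phi\in\Omega^1(X,\mathfrak{aut}(H))$, and injectivity of $p^*$ gives uniqueness.

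I expect the main obstacle to be purely bookkeeping: fixing the conventions for $\bar\theta$ (left- versus right-invariant Maurer--Cartan form), for the sign of $t_{*}$, and for the direction of the equivariance of $g$, so that the non-horizontal and non-invariant pieces cancel exactly rather than up to a sign or an inner automorphism. Once \erf{53} and the equivariance of $g$ are pinned down in matching conventions, the identity $p^*A_\phi = \mathrm{Ad}_{g}(p^*A) - g^*\bar\theta - t_{*}\circ\phi$ is forced, and the lemma follows.
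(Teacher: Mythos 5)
The paper states this lemma without proof, so there is nothing to compare your attempt against; judged on its own merits, your overall strategy is certainly the natural (and surely the intended) one: show that $\eta := \mathrm{Ad}_{g}(p^{*}A) - g^{*}\bar\theta - t_{*}\circ\phi$ is basic for the surjective submersion $p$, then get existence by descent and uniqueness from injectivity of $p^{*}$ on forms. Your derivation of the equivariance $g(p\cdot h)=t(h)^{-1}\circ g(p)$ from \erf{47} is correct, and so is the horizontality check: $\phi(\xi^{R})=\xi$, $(g^{*}\bar\theta)(\xi^{R})=-t_{*}\xi$, and $\mathrm{Ad}_{g}(p^{*}A)$ kills vertical vectors, so $\eta(\xi^{R})=0$.

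The genuine gap is the invariance step, which is not the harmless bookkeeping you predict. Write the constant-$h$ case of \erf{53} as $r_{h}^{*}\phi = \mathrm{Ad}_{h^{-1}}\circ\phi - T_{h}\circ p^{*}A$, where $T_{h}$ is the twist term. Using $g\circ r_{h}=t(h)^{-1}\cdot g$ and $t\circ\mathrm{Ad}_{h^{-1}}=\mathrm{Ad}_{t(h)^{-1}}\circ t$, all three constituents of $\eta$ transform homogeneously by the same conjugation, and the only inhomogeneity comes from the twist:
\begin{equation*}
r_{h}^{*}\eta \;=\; \mathrm{Ad}_{t(h)^{-1}}\circ\eta \;+\; t_{*}\circ T_{h}\circ p^{*}A\text{.}
\end{equation*}
Invariance would force $t_{*}\circ T_{h}\circ p^{*}A = (\mathrm{id}-\mathrm{Ad}_{t(h)^{-1}})\circ\eta$; but the left-hand side depends only on $A$ and $h$, while the right-hand side depends on $\phi$ through $-(\mathrm{id}-\mathrm{Ad}_{t(h)^{-1}})(t_{*}\circ\phi)$, so this identity cannot hold for all twisted connections. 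This is exactly the ``fails up to an inner automorphism'' scenario you name and then dismiss. Moreover, \erf{53} as printed is not even self-consistent under iterated right translation: compatibility with the $\mathrm{Ad}_{h^{-1}}$-twisting forces the cocycle identity $T_{h_1h_2}=\mathrm{Ad}_{h_2^{-1}}T_{h_1}+T_{h_2}$, which singles out $T_{h}(\xi)=\tfrac{d}{ds}\bigl(h^{-1}\exp(s\xi)(h)\bigr)\big|_{s=0}$, and with that reading the combination that actually descends is the mirror one, $\mathrm{Ad}_{g^{-1}}\bigl(p^{*}A + g^{*}\bar\theta + t_{*}\circ\phi\bigr)$, not the one in the lemma. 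The lemma's combination descends precisely when the translation law is taken in the Aschieri--Cantini--Jurco form in which its twist term involves the induced form $A_{\phi}$ (their distinction between right- and left-twisted connections). So a correct proof must first pin down the exact axiom and which of the two forms enters it; once that is done the invariance check is a one-line computation, but that identification is the real content of the lemma, and your argument assumes it rather than establishes it.
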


A twisted connection in a principal bibundle $P$ gives rise to parallel transport maps
\begin{equation*}
\tau_{\gamma}: P_x \to P_y
\end{equation*}
between the fibres of $P$
over points $x,y$ associated to any path $\gamma:x \to y$. It is obtained in the same way as in an ordinary principal bundle but using equation \erf{53} instead of the usual one. As a result of the twist, the maps $\tau_{\gamma}$ are not bi-equivariant; they satisfy
\begin{equation}
\label{54}
\tau_{\gamma}(l_{F_{\phi}(\gamma)(h)}(p)) = l_{h^{-1}}(\tau_{\gamma}(p))
\quad\text{ and }\quad
\tau_{\gamma}(r_h(p)) = r_{F(\gamma)^{-1}(h^{-1})}\tau_{\gamma}(p)
\end{equation}
where $F,F_{\phi}: PX \to \mathrm{Aut}(H)$ are the parallel transport maps associated to the 1-forms $A$ and $A_{\phi}$; see \cite[Proposition 4.7]{schreiber3}. These complicated relations have a very easy interpretation, as we will see in the next section.

Finally, an $A$-twisted connection $\phi$ on a principal $H$-bundle $P$ has a curvature: this is the 2-form
\begin{equation*}
\mathrm{curv}(\phi) := \mathrm{d}\phi + [\phi \wedge \phi] + \alpha_{*}(A \wedge \phi) \in \Omega^2(P,\mathfrak{h})\text{.}
\end{equation*}

For two principal $H$-bibundles $P$ and $P'$ over $X$ one can fibrewise take the tensor product of $P$ and $P'$ yielding a new principal $H$-bibundle $P \times_H P$ over $X$. If the two bibundles are equipped with twisted connections, the bibundle $P \times_H P'$ inherits a twisted connection only  if the two twists
satisfy an appropriate matching condition. Suppose the principal $H$-bibundle $P$ is equipped with an $A$-twisted connection $\phi$, and $P'$ is equipped with an $A'$-twisted connection $\phi'$, and suppose that the matching condition
\begin{equation}
\label{49}
A'_{\phi'} = A
\end{equation}
is satisfied. Then, the tensor product bibundle $P \times_H P'$ carries an $A'$-twisted connection $\phi_{\mathrm{tot}}\in\Omega^1(P \times_H P,\mathfrak{h})$
characterized uniquely by the condition that
\begin{equation*}
    \mathrm{pr}^{*} \phi_{\mathrm{tot}}
    =
    (g \circ p')_* \circ p^*\phi
    +
    p'^* \phi'\text{,}
\end{equation*}
where $\mathrm{pr}: P \times_X P' \to P \times_H P'$ is the projection to the tensor product and $p$ and $p'$ are the projections to the two factors.
This tensor product,  defined only between appropriate pairs of bibundles with twisted connections, turns $\mathfrak{Bibun}_{H}^{\nabla}(X)$
into a \quot{monoidoidal} category. 

A better point of view is to see it as a 2-category: the objects are 
the twists, i.e.  1-forms $A \in \Omega^1(X,\mathfrak{aut}(H))$,
  a 1-morphism $A \to A'$ is a principal $H$-bibundle $P$ with $A'$-twisted connection $\phi$ such that $A'_{\phi'}=A$,  and a 2-morphism $(P,\phi) \Rightarrow (P',\phi')$ is just a morphism of principal $H$-bibundles that respects the $A'$-twisted connections.

\subsubsection{Transport Functors for Bibundles with Twisted Connections}

\label{sec4_3_2}

We are now going to relate the category $\mathfrak{Bibun}_H^{\nabla}(X)$ of principal $H$-bibundles with twisted connections over $X$ to a category of transport functors. Using the terminology of Section \ref{sec:smoothdescent}, we have:

\begin{proposition}
\label{prop4}
There is a surjective and faithful functor
\begin{equation*}
\mathfrak{Bibun}_H^{\nabla}(X) \to \transportX{}{1}{\Lambda\mathcal{B}\mathrm{AUT}(H)}{\Lambda_i\mathcal{B}(\hbitor)}\text{.}
\end{equation*}
\end{proposition}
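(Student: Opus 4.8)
The plan is to construct the claimed functor explicitly and then verify surjectivity and faithfulness directly. Recall that an object of the target 2-category $\transportX{}{1}{\Lambda\mathcal{B}\mathrm{AUT}(H)}{\Lambda_i\mathcal{B}(\hbitor)}$ is a transport functor $\mathcal{P}_1(X) \to \Lambda_i\mathcal{B}(\hbitor)$ with $\Lambda\mathcal{B}\mathrm{AUT}(H)$-structure. Since the objects of $\Lambda_i\mathcal{B}(\hbitor)$ are the 1-morphisms of $\mathcal{B}(\hbitor)$, i.e. $H$-bitorsors, and since $\Lambda\mathcal{B}\mathrm{AUT}(H)$ has objects the elements of $\mathrm{Aut}(H)$, the first step is to check that under $\Lambda i$ an object $\varphi \in \mathrm{Aut}(H)$ maps to the bitorsor $_\varphi H$; this is exactly how $i$ was defined. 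Thus a transport functor of this structure assigns to each $x \in X$ an $H$-bitorsor (the fibre), to each path $\gamma$ a morphism in $\Lambda_i\mathcal{B}(\hbitor)$, and the $\Lambda\mathrm{AUT}(H)$-structure encodes precisely the twist data $A \in \Omega^1(X,\mathfrak{aut}(H))$.

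The central construction is to send a principal $H$-bibundle $(P,\phi)$ with $A$-twisted connection to its parallel transport. First I would use the twisted-connection equation \erf{53} to produce parallel transport maps $\tau_\gamma: P_x \to P_y$ along each path $\gamma$, exactly as indicated in the text preceding the proposition. The key point is that the equivariance defects recorded in \erf{54} are not a nuisance but are precisely the data of a morphism in $\Lambda_i\mathcal{B}(\hbitor)$: the twist functions $F,F_\phi: PX \to \mathrm{Aut}(H)$ coming from $A$ and $A_\phi$ via Theorem \ref{th2} supply the two horizontal 1-morphisms $i(x)$ and $i(y)$ in the defining square \erf{36} of $\Lambda_i T$, while $\tau_\gamma$ itself is the filling 2-morphism $\varphi$. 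One then checks functoriality (the assignment $\gamma \mapsto \tau_\gamma$ respects composition and identities, which is automatic from the path-ordered integration of \erf{53}) and that this functor is a \emph{transport} functor, i.e. admits a smooth local trivialization with smooth descent data; here local sections of $P$ over a cover of $X$ give local trivializations, and the transition functions together with the 1-forms $A$ and $\phi$ furnish the smooth descent data via Theorem \ref{th3} applied fibrewise. Morphisms of bibundles respecting twisted connections map to morphisms of transport functors, giving the functor on all of $\mathfrak{Bibun}_H^{\nabla}(X)$.

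For surjectivity I would run the construction backwards: given a transport functor with $\Lambda\mathcal{B}\mathrm{AUT}(H)$-structure, its smooth descent data (which exists by definition and by the equivalence of Theorem \ref{th3}) reconstructs a fibre bundle with fibre an $H$-bitorsor together with a connection, and the $\mathrm{Aut}(H)$-valued transition data assembles into the twist 1-form $A$ and the twisted connection $\phi$ satisfying \erf{53}; thus every object in the image category arises, possibly after refining the surjective submersion, from some $(P,\phi)$. Faithfulness is the cleanest part: a morphism of bibundles is determined by its underlying bi-equivariant bundle map, and distinct such maps induce distinct morphisms of transport functors because the transport functor remembers the fibres as $H$-bitorsors and the morphism as the induced map on fibres. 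I expect the main obstacle to be the careful bookkeeping in the surjectivity argument: one must show that the abstractly-given $\Lambda\mathcal{B}\mathrm{AUT}(H)$-structure on a transport functor genuinely reassembles into a \emph{globally defined} principal $H$-bibundle with a twisted connection of the specific form \erf{53}, matching the twist $A$ extracted from the structure groupoid; in particular one must verify that the defect relations \erf{54} read off from a general transport functor are consistent with a single smooth 1-form $\phi$ on $P$, which is where the precise form of the twisting in the definition of $\Lambda i$ and the compositors $c_{g_1,g_2}$ of $i$ must be matched line by line against equation \erf{53}. Note that the functor is asserted to be surjective and faithful but not full, reflecting that $i$ is an equivalence while the passage through $\Lambda_i$ can introduce morphisms in the target not coming from bibundle maps.
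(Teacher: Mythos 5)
Your construction is essentially the paper's own proof: both send $(P,\phi)$ to the transport functor whose fibre at $x$ is the bitorsor $P_x$ and whose value on a path $\gamma$ is the square in $\Lambda_i\mathcal{B}(\hbitor)$ with horizontal 1-morphisms $i(F(\gamma))$ and $i(F_{\phi}(\gamma))$ filled by the parallel transport $\tau_\gamma^{-1}$, with equations \erf{54} interpreted exactly as the bi-equivariance of that 2-morphism, morphisms of bibundles going to natural transformations with identity horizontal arrows (whence faithful but not full), and the transport-functor property verified from an ordinary local trivialization of $P$. You additionally sketch the surjectivity argument by reconstructing $(P,\phi)$ from smooth descent data, a point the paper's proof leaves implicit.
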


\begin{proof}
Given a principal $H$-bibundle $P$ with $A$-twisted connection, we define the associated transport functor by
\begin{equation*}
\mathrm{tra}_P\quad:\quad\alxydim{}{x \ar[r]^{\gamma} & y}\quad\mapsto\quad\alxydim{@C=1.5cm@R=1.3cm}{i(*) \ar[d]_{P_x} \ar[r]^{i(F(\gamma))} & i(*) \ar@{=>}[dl]|{\tau_{\gamma}^{-1}} \ar[d]^{P_y} \\ i(*) \ar[r]_{i(F_{\phi}(\gamma))} & i(*)\text{.}}
\end{equation*}
Here $F,F_{\phi}:PX \to \mathrm{Aut}(H)$ are the maps defined by $A$ and $A_{\phi}$ that we have already used in the previous section. The definition contains the claim that the parallel transport map $\tau_{\gamma}$ gives a bi-equivariant map
\begin{equation*}
\tau_{\gamma}^{-1}: P_y \;\times_H \;_{F(\gamma)}H \to \;_{F_{\phi}(\gamma)}H \;\times_H\; P_x\text{;}
\end{equation*}
it is indeed easy to check that this is precisely the meaning of equations \erf{54}. A morphism $f:P \to P'$ between bibundles with $A$-twisted connections induces a natural transformation $\eta_f: \mathrm{tra}_P \to \mathrm{tra}_{P'}$ between the associated  functors, whose component at a point $x$ is the bi-equivariant map $f_x:P_x \Rightarrow P_x'$. This is a particular morphism in $\Lambda_i\mathcal{B}(\hbitor)$ for which the horizontal 1-morphisms are identities. Here it becomes clear that the assignments
\begin{equation*}
(P,\phi) \mapsto \mathrm{tra}_P\quad\text{ and }\quad f\mapsto \eta_f
\end{equation*}
define a functor which is faithful but not full. 

It remains to check that the functor $\mathrm{tra}_P$ is a transport 2-functor. We leave it as an exercise for the reader to construct a local trivialization $(t,\mathrm{triv})$ of $\mathrm{tra}_P$ with smooth descent data.
Hint: use an ordinary local trivialization of the bibundle $P$ and follow the proof of Proposition 5.2 in \cite{schreiber3}.
\end{proof}

Both categories in Proposition \ref{prop4} have the feature that they have tensor products between appropriate objects. Concerning the bibundles with twisted connections, we have described this in terms of the matching condition \erf{49} on the twists. Concerning the category of transport functors, this tensor product is inherited from the one on $\Lambda_i\mathcal{B}(\hbitor)$, which has been discussed in Section \ref{sec:smoothdescent}.

\begin{lemma}
\label{lem7}
The matching condition \erf{49} corresponds to the required condition for tensor products in $\Lambda_i\mathcal{B}(\hbitor)$ under the functor of Proposition \ref{prop4}. Moreover, the functor respects tensor products whenever they are well-defined.
\end{lemma}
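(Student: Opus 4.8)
The plan is to verify Lemma~\ref{lem7} by unwinding both notions of tensor product and checking that the functor of Proposition~\ref{prop4} carries one to the other. First I would recall the two matching conditions explicitly. On the bibundle side, the tensor product $P \times_H P'$ of an $A$-twisted bibundle $(P,\phi)$ with an $A'$-twisted bibundle $(P',\phi')$ is defined precisely when \erf{49} holds, i.e. $A'_{\phi'}=A$. On the transport-functor side, the tensor product in $\Lambda_i\mathcal{B}(\hbitor)$ was defined in Section~\ref{sec3_5} by stacking the defining squares \erf{36} on top of each other, which requires that the bottom horizontal 1-morphism of the upper square coincide with the top horizontal 1-morphism of the lower one.

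The key step is to read off these horizontal 1-morphisms from the explicit formula for $\mathrm{tra}_P$ in the proof of Proposition~\ref{prop4}. There the square associated to a path $\gamma$ has top edge $i(F(\gamma))$ and bottom edge $i(F_{\phi}(\gamma))$, where $F$ and $F_{\phi}$ are the maps $PX \to \mathrm{Aut}(H)$ determined by the twists $A$ and $A_{\phi}$ via Theorem~\ref{th2}. Hence for two composable bibundles the stacking condition in $\Lambda_i\mathcal{B}(\hbitor)$ becomes the requirement that the bottom edge $i(F_{\phi'}(\gamma))$ of the $(P',\phi')$-square equal the top edge $i(F(\gamma))$ of the $(P,\phi)$-square, i.e. $F_{\phi'} = F$ as maps into $\mathrm{Aut}(H)$. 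By the bijection of Theorem~\ref{th2} between $\mathrm{aut}(H)$-valued $1$-forms and their functors, and since $F_{\phi'}$ corresponds to $A'_{\phi'}$ while $F$ corresponds to $A$, this is equivalent to $A'_{\phi'}=A$, which is exactly \erf{49}. This establishes the first assertion.

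For the second assertion I would compare the twisted connection on the tensor-product bibundle with the $2$-morphism produced by the stacked diagram. On the bibundle side the total connection $\phi_{\mathrm{tot}}$ is characterized by $\mathrm{pr}^{*}\phi_{\mathrm{tot}} = (g\circ p')_{*}\circ p^{*}\phi + p'^{*}\phi'$, and the resulting parallel transport map on $P\times_H P'$ is the composite $\tau_{\gamma}^{P}$ followed by $\tau_{\gamma}^{P'}$ in the appropriate order. On the transport side, the stacked $2$-morphism is exactly the vertical composite of $(\tau_{\gamma}^{P})^{-1}$ and $(\tau_{\gamma}^{P'})^{-1}$, and under the canonical identification of the bitorsor $P_x \times_H P'_x$ with the fibre of $P\times_H P'$ at $x$ these agree. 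I would therefore check that the functor sends $(P\times_H P',\phi_{\mathrm{tot}})$ to $\mathrm{tra}_{P}\otimes\mathrm{tra}_{P'}$ by matching parallel transport maps on each path, using that $\tau_{\gamma}$ for $\phi_{\mathrm{tot}}$ decomposes according to the defining formula for $\phi_{\mathrm{tot}}$.

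The main obstacle I expect is bookkeeping rather than anything conceptual: one must keep straight the several maps $F$, $F_{\phi}$, $F_{\phi'}$, $F_{\phi_{\mathrm{tot}}}$ and the left/right twist conventions from \erf{54}, and verify that the $\mathrm{Aut}(H)$-valued functions label the edges of the stacked square consistently with the compositor $c_{g_1,g_2}$ of $i$ used in identifying $_{g_1}H \times_H {}_{g_2}H \cong {}_{g_2 g_1}H$. Once the edge-labels are confirmed to compose correctly, associativity of the tensor product in $\Lambda_i\mathcal{B}(\hbitor)$ (restored by axiom (F3) on the compositor of $i$, as noted in Section~\ref{sec3_5}) guarantees the identifications are coherent, and the lemma follows.
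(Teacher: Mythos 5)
Your proposal is correct and takes essentially the same route as the paper: the paper's proof likewise reads off the top and bottom edge labels of the squares $\mathrm{tra}_P(\gamma)$ as the functors $F$ and $F_{\phi}$ attached to the twists $A$ and $A_{\phi}$, so that the matching condition \erf{49} becomes precisely the stacking condition $F_{\phi'}=F$ in $\Lambda_i\mathcal{B}(\hbitor)$, and it likewise derives preservation of tensor products directly from the defining formula for $\phi_{\mathrm{tot}}$. The only difference is one of detail: you make explicit, via the isomorphism of Theorem \ref{th2}, why equality of the edge-labelling functors is equivalent to equality of the corresponding $\mathfrak{aut}(H)$-valued $1$-forms, a point the paper leaves implicit.
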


\begin{proof}
Suppose that the matching condition $A'_{\phi'}=A$ holds, so that principal $H$-bibundles $P$ and $P'$ with connections $\phi$ and $\phi$ have a tensor product. It follows that the map $F_{\phi'}$  which labels the horizontal 1-morphisms at the bottom of the images of $\mathrm{tra}_{P'}$ is equal to the map  $F$ which labels the ones at the top of the images of $\mathrm{tra}_{P}$; this is the required condition for the existence of the tensor product $\mathrm{tra}_{P'} \otimes \mathrm{tra}_{P}$. That the tensor products are respected follows from the definition of the twisted connection $\phi_{\mathrm{tot}}$ on the tensor product bibundle.
\end{proof}

An alternative formulation of Lemma \ref{lem7} would be that the functor of Proposition \ref{prop4} respects the \quot{monoidoidal structures}, or, that it is a double functor between (weak) double categories.

\subsubsection{Non-Abelian Bundle Gerbes as Transport 2-Functors}

Let $\pi:Y \to M$ be a surjective submersion. We recall:

\begin{definition}[{{\cite{aschieri}}}]
\label{def:nagrb}
A \emph{non-abelian $H$-bundle gerbe with connection} with respect to $\pi$ is a 2-form $B\in\Omega^2(Y,\mathfrak{h})$, a 1-form $A \in \Omega^1(Y,\mathfrak{aut}(H))$, a principal $H$-bibundle $p\maps P \to Y^{[2]}$ with $A$-twisted connection $\phi$ such that
\begin{equation}
\label{56}
\mathrm{curv}(\phi) = (\pi_1 \circ p)^{*}B - (\alpha_{g})_{*} \circ (\pi_2 \circ p)^{*}B\text{,}
\end{equation}
and an associative morphism 
\begin{equation*}
\mu: \pi_{23}^{*}P \;\times_H \; \pi_{12}^{*}P \to \pi_{13}^{*}P
\end{equation*}
of bibundles over $Y^{[3]}$ that respects the twisted connections.
\end{definition}

In \erf{56}, $g$ is the smooth map \erf{47} and $\alpha: \mathrm{Aut}(H) \times H \to H$ is the evaluation.   The definitions of  1-morphisms and  2-morphisms generalize in a straightforward way the definitions from the abelian case.  This defines the 2-category $H\text{-}\mathfrak{BGrb}^{\nabla}(\pi)$.  The 2-category $H\text{-}\mathfrak{BGrb}^{\nabla}(M)$ of non-abelian $H$-bundle gerbes over $M$ is defined as the direct limit,
\begin{equation*}
H\text{-}\mathfrak{BGrb}^{\nabla}(M) := \lim_{\overrightarrow{\pi}} H\text{-}\mathfrak{BGrb}^{\nabla}(\pi)\text{.}
\end{equation*}
We remark:
\begin{enumerate}[(i)]
\item
A non-abelian $S^1$-bundle gerbe is \textit{not} the same as an abelian $S^1$-bundle gerbe: for the non-abelian bundle gerbes also the automorphisms are important, and $\mathrm{Aut}(S^1)\cong \Z_2$. For transport 2-functors this is even more obvious: the Lie 2-groups $\mathcal{B}S^1$ and $\mathrm{AUT}(S^1)$ are not equivalent.

\item
The non-abelian bundle gerbes of Definition \ref{def:nagrb} are a priori not \quot{fake-flat}, i.e. the relation
\begin{equation*}
t_{*}(B) = \mathrm{d}A + [A \wedge A]
\end{equation*}
may not be satisfied, see Remark \ref{rem2}. We denote the sub-2-categories of fake-flat non-abelian $H$-bundle gerbes by $H\text{-}\mathfrak{BGrb}^{\nabla_{\!f\!f}}(-)$.
\end{enumerate}

\begin{proposition}
\label{prop:locequivnagrb}
For every surjective submersion $\pi:Y \to M$, there is a 2-functor
\begin{equation*}
H\text{-}\mathfrak{BGrb}^{\nabla_{\!f\!f}}(\pi) \to \transsmooth{i}{2}{}\text{.}
\end{equation*}
which is surjective and faithful on Hom-categories. In particular, it induces an injective map on isomorphism classes of objects.
\end{proposition}

\begin{proof}
All relations concerning the bimodules are analogous to those in the abelian case, upon generalizing Lemma \ref{lem6} to Proposition \ref{prop4} and Lemma \ref{lem7}. Relation \erf{56} for the 2-form $B$ can be proved in the same way as  Lemma \ref{lem8}, but now using the full version of equation \erf{55}.
The comments concerning the descent datum $\psi$ also remain valid. 
\end{proof}

In the direct limit, the 2-functors of Proposition \ref{prop:locequivnagrb} induce a 2-functor
\begin{equation}
\label{dirlimfun}
H\text{-}\mathfrak{BGrb}^{\nabla_{\!f\!f}}(M) \to \transsmoothpi{i}{2}{\mathrm{Gr}}{}_M\text{.}
\end{equation}
Since the direct limit respects the operation of forming isomorphism classes of objects, and on the level of sets a direct limit of injective maps is injective, the 2-functor \erf{dirlimfun} induces an injective map on isomorphism classes of objects. This completes the proof of Theorem \ref{th:nagrb}.

\subsection{2-Bundles with Connections}

\label{sec4_4}

Apart from the equivalence between transport functors and principal $G$-bundles with connection (Theorem \ref{th2}), our paper \cite{schreiber3} also contains an analogous equivalence for vector bundles with connection. It has
 an immediate generalization to 2-vector bundles with many applications, on which we shall give a brief outlook. 

Suppose $2\mathrm{Vect}$ is  some 2-category of 2-vector spaces. Given a 2-group $\mathfrak{G}$, a
representation of $\mathfrak{G}$ on such a 2-vector space is a 2-functor
\begin{equation*}
  \rho : \mathcal{B}\mathfrak{G} \to 2\mathrm{Vect}\text{.}
\end{equation*}
A \emph{2-vector bundle with connection} and structure 2-group $\mathfrak{G}$  is nothing but a  transport 2-functor
$
  \mathrm{tra} : \mathcal{P}_2(X) \to 2\mathrm{Vect}
$
with $\mathcal{B}\mathfrak{G}$-structure.
Important classes of 2-vector bundles are line 2-bundles and string bundles.

\subsubsection{Models for 2-Vector Spaces}

Depending on the precise application there is some flexibility in
what one may want to understand under a 2-vector space. Usually
 2-vector spaces are abelian module
categories over a given monoidal category. For $k$ a field, two important classes of examples are the following.
First, let  $\widehat{k}$ be
the discrete monoidal category over $k$. Then, $2\mathrm{Vect}$ is 2-category of module categories over $\widehat{k}$. This is 
equivalent to the 2-category of categories internal to $k$-vector spaces. These
\emph{Baez-Crans 2-vector spaces} \cite{baez7} are appropriate for the discussion of Lie 2-algebras.

The second model for $2\mathrm{Vect}$ is the 2-category of module categories over the monoidal category $\mathrm{Vect}(k)$ of $k$-vector spaces,
\begin{equation*}
2\mathrm{Vect} := \mathrm{Vect}(k)\text{-}\mathrm{Mod}\text{.}
\end{equation*}
In its totality this is rather unwieldy, but it contains two important  sub-2-categories: the 2-category $\mathrm{KV}(k)$ of \emph{Kapranov-Voevodsky 2-vector spaces} \cite{kapranov1} and the 2-category $\mathrm{Bimod}(k)$, whose objects are $k$-algebras, whose 1-morphisms are bimodules over these algebras and whose 2-morphisms are bimodule homomorphisms \cite{shulman1}. Indeed, there is an  inclusion 2-functor
\begin{equation*}
\iota: \mathrm{Bimod}(k) \hookrightarrow \mathrm{Vect}(k)\text{-}\mathrm{Mod}
\end{equation*}
that sends a $k$-algebra $A$ to the category $A\text{-}\mathrm{Mod}$ of ordinary (say, right) $A$-modules. This is a module category over $\mathrm{Vect}(k)$ by tensoring a right module from the left by a vector space. A 1-morphism, an $A$-$B$-bimodule $N$, is sent to the functor that tensores a right $A$-module  from the right by $N$, yielding a right $B$-module. A bimodule morphism induces evidently a natural transformation of these functors.

If one restricts the 2-functor $\iota$ to the full sub-2-category formed by
those algebras that are direct sums  $A = k^{\oplus n}$ of the ground
field algebra, the 2-vector spaces in the image of $\iota$ are of the form $\mathrm{Vect}(k)^n$, i.e. tuples of vector spaces. The 1-morphisms in the image are $(m\times n)$-matrices whose entries are $k$-vector spaces. These form  the 2-category of Kapranov-Voevodsky 2-vector spaces \cite{kapranov1}.

\subsubsection{The canonical Representation of a 2-Group}

Every automorphism 2-group $\mathrm{AUT}(H)$ of a Lie group $H$ has a
canonical representation on 2-vector spaces, namely
\begin{equation}
\label{30}
\alxydim{}{\mathcal{B}\mathrm{AUT}(H) \ar[r]^-{A} & \mathrm{Bimod}(k) \ar[r]^-{\iota} & \mathrm{Vect}(k)\text{-}\mathrm{Mod}\text{,}}
\end{equation}
where the  2-functor $A$ is defined similar as the one we have used for the non-abelian bundle gerbes in \erf{48}. It sends the single object to $k$ regarded as a $k$-algebra, it sends a 1-morphism $\varphi\in \mathrm{Aut}(H)$ to the bimodule $_{\varphi}k$ in the notation of Section \ref{sec4_3_2}, and it sends a 2-morphism $(\varphi,h):\varphi\Rightarrow  c_h \circ \varphi$ to the multiplication with $h$ from the left. 

Now let $\mathfrak{G}$ be any smooth Lie 2-group corresponding to a smooth crossed module $(G,H,t,\alpha)$. We have a canonical 2-functor
\begin{equation}
\label{39}
\mathcal{B}\mathfrak{G} \to \mathcal{B}\mathrm{AUT}(H)\quad:\quad \bigon{\ast}{\ast}{g}{g'}{h}\mapsto \bigon{\ast}{\ast}{\alpha_g}{\alpha_{g'}}{h}
\end{equation}
whose composition with \erf{30} gives a representation of $\mathfrak{G}$, that we call the \emph{canonical $k$-representation}.

\begin{example} 
\label{ex1}
A very simple but useful example is the canonical $\C$-representation of
$\mathcal{B}\C^{\times}$.
In this case the composition \erf{30} is the 2-functor
\begin{equation*}
\rho\quad:\quad \mathcal{BB}\C^{\times} \to \mathrm{Vect}(\C)\text{-}\mathrm{Mod}\quad:\quad \bigon{\ast}{\ast}{\id}{\id}{z} \mapsto \bigon{\mathrm{Vect}(\C)}{\mathrm{Vect}(\C)}{- \otimes \C}{-\otimes \C}{- \cdot z}
\end{equation*}
for all $z \in \C^{\times}$. Notice that $\mathrm{Vect}(\mathbb{C})$ is the canonical
1-dimensional 2-vector space over $\C$ in the same sense in that $\mathbb{C}$ is
the canonical 1-dimensional complex 1-vector space. Therefore, transport 2-functors
\begin{equation*}
\mathrm{tra}:\mathcal{P}_2(M) \to \mathrm{Vect}(\C)\text{-}\mathrm{Mod}
\end{equation*}
with $\mathcal{BB}\C^{\times}$-structure  deserve to be addressed as
\emph{line 2-bundles with connection}. Two remarks:
\begin{enumerate}
\item 
Going through the discussion of abelian bundle gerbes with connection in Section \ref{sec3_2} it is easy to see  that line 2-bundles with connection are equivalent to bundle gerbes with connection defined via line bundles instead of circle bundles.
\item
The fibre $\mathrm{tra}(x)$ of a line 2-bundle $\mathrm{tra}$ at a point $x$ is an algebra which is Morita equivalent to the ground field $\C$. These are exactly the finite rank operators on a separable Hilbert space. Thus, line 2-bundles with connection are a form of bundles of finite rank operators with connection, this is the point of view taken in \cite{bouwknegt1}.
\end{enumerate}
\end{example}
 
The  2-functor $A:\mathcal{B}\mathrm{AUT}(H) \to \mathrm{Bimod}(k)$ we have used above can be deformed to a 2-functor $A^{\rho}$ using an ordinary representation $\rho: \mathcal{B}H \to \mathrm{Vect}(k)$ of $H$. It sends the object of $\mathcal{B}\mathrm{AUT}(H)$ to the algebra $A^{\rho}(\ast)$ which is the vector space generated from all the linear maps $\rho(h)$. A 1-morphism $\varphi\in\mathrm{Aut}(H)$ is again sent to the bimodule $_{\varphi}A^{\rho}(\ast)$, and the 2-morphisms as before to left multiplications. The original 2-functor is reproduced by $A=A^{\mathrm{triv}_{k}}$ from the trivial representation of $H$ on $k$.

\begin{example}
For $G$ a compact simple and simply-connected Lie group, we consider the level $k$ central extension $H_k:=\hat\Omega_k G$ of the group of based loops in $G$. For a positive energy representation $\rho: \mathcal{B}\hat\Omega_k G \to \mathrm{Vect}(k)$ the algebra $A^{\rho}(\ast)$ turns out to be a von Neumann-algebra while the bimodules $_{\varphi}A^{\rho}(\ast)$ are Hilbert bimodules. In this infinite-dimensional case we have to make the composition of 1-morphisms more precise: here we take not the algebraic tensor product of these Hilbert bimodules but the  Connes fusion tensor product
\cite{stolz1}. Connes fusion product still respects the composition: for $A$ a von Neumann algebra
and ${}_\varphi A$ the bimodule structure on it induced from twisting the left action by an algebra
automorphism $\varphi$, we have
\begin{equation*}
  {}_\varphi A \otimes {}_{\varphi'}A \simeq {}_{\varphi' \circ \varphi} A
\end{equation*}
under the Connes fusion tensor product. Now let $\mathfrak{G}=\mathrm{String}_k(G)$ be the string 2-group defined from the  crossed module $\hat\Omega_k G \to P _0G$ of Fréchet Lie groups \cite{baez9}. Together with the projection 2-functor \erf{39} we obtain an induced representation
\begin{equation*}
i:\mathcal{B}\mathrm{String}_k(G) \to \mathrm{Bimod}_{\mathrm{CF}}(k)
\end{equation*}
The fibres of a transport 2-functor 
\begin{equation}
\mathrm{tra}: \mathcal{P}_2(M) \to \mathrm{Bimod}_{\mathrm{CF}}(k)
\label{38}
\end{equation}
with $\mathcal{B}\mathrm{String}_k(G)$-structure are hence von Neumann algebras, and its parallel transport along a path is a Hilbert bimodule for these fibres. 
In conjunction with the result \cite{baez8,baas2}
that $\mathrm{String}_k(G)$-2-bundles have the same classification as
ordinary fibre bundles whose structure group is the topological
String group, this says that transport 2-functors \erf{38} have to be addressed as \emph{String 2-bundles with connection}, see \cite{stolz1}.
\end{example}

\subsubsection{Twisted Vector Bundles}

Vector bundles over $M$ twisted by a class $\xi\in H^3(M,\Z)$ are the same thing as gerbe modules for a bundle gerbe $\mathcal{G}$ whose Dixmier-Douady class is $\xi$ \cite{bouwknegt1}. These modules are in turn nothing else but certain (generalized) 1-morphisms in the 2-category of bundle gerbes $\mathfrak{BGrb}(M)$ \cite{waldorf1}. The same is true for connections on twisted vector bundles. More precisely, a twisted vector bundle with connection is the same as a 1-morphism
\begin{equation*}
\mathcal{E}:\mathcal{G} \to \mathcal{I}_{\rho}
\end{equation*}
from the bundle gerbe $\mathcal{G}$ with connection to the trivial bundle gerbe $\mathcal{I}$ equipped with the connection 2-form $\rho \in \Omega^2(M)$. 

Now let 
\begin{equation*}
\mathrm{tra}:\mathcal{P}_2(M) \to \mathrm{Vect}(\C)\text{-}\mathrm{Mod}
\end{equation*}
be a transport 2-functor which plays the role of the bundle gerbe $\mathcal{G}$, but we allow an arbitrary structure 2-group $\mathfrak{G}$ and any representation $\rho:\mathcal{B}\mathfrak{G} \to \mathrm{Vect}(\C)\text{-}\mathrm{Mod}$. Let $\mathrm{tra}^{\infty}: \mathcal{P}_2(M) \to \mathcal{B}\mathfrak{G}$ be a smooth 2-functor which plays the role of the trivial bundle gerbe. We shall now consider transport transformations
\begin{equation*}
A:\mathrm{tra} \to \mathrm{tra}^{\infty}_{\rho}\text{.}
\end{equation*} 
Let $\pi:Y \to M$ be a surjective submersion for which $\mathrm{tra}$ admits a local trivialization with smooth descent data $(\mathrm{triv},g,\psi,f)$. The descent data of $\mathrm{tra}^{\infty}$ is of course $(\pi^{*}\mathrm{tra}^{\infty},\id,\id,\id)$. Now the transport transformation $A$ has the following descent data: the first part is a \pt\ $h: \mathrm{triv} \to \pi^{*}\mathrm{tra}^{\infty}$ whose associated functor $\holo(h): \mathcal{P}_1(Y) \to \Lambda_{\rho}(\mathrm{Vect}(\C)\text{-}\mathrm{Mod})$ is a transport functor with $\Lambda \mathcal{B}\mathfrak{G}$-structure. The second part is a modification $\varepsilon: \pi_2^{*}h \circ g \Rightarrow \id \circ \pi_1^{*}h$ whose associated natural transformation
\begin{equation*}
\holo(\varepsilon): \pi_2^{*}\holo(h) \otimes \holo(g) \to \pi_1^{*}\holo(h)
\end{equation*}
is a morphism of transport functors over $Y^{[2]}$. According to the coherence conditions on descent 1-morphisms, it fits into the commutative diagram
\begin{equation}
\label{62}
\alxydim{@C=2cm}{\pi_3^{*}\holo(h) \otimes \pi_{23}^{*}\holo(g) \otimes \pi_{12}^{*}\holo(g) \ar[r]^-{\pi_{23}^{*}\holo(\varepsilon) \otimes \id} \ar[d]_{\id\otimes \holo(f)} & \pi_2^{*}\holo(h) \otimes \pi_{12}^{*}\holo(g) \ar[d]^{\pi_{12}^{*}\holo(\varepsilon)} \\ \pi_{3}^{*}\holo(h) \otimes \pi_{13}^{*}\holo(g) \ar[r]_{\pi_{13}^{*}\holo(\varepsilon)} & \pi_1^{*}\holo(h)}
\end{equation}
of morphisms of transport functors over $Y^{[3]}$ and satisfies $\Delta^{*}\holo(\varepsilon) \circ \holo(\psi) =\id$. The transport functor 
\begin{equation*}
\holo(h): \mathcal{P}_1(Y) \to \Lambda_{\rho}(\mathrm{Vect}(\C)\text{-}\mathrm{Mod})
\end{equation*}
together with the natural transformation $\holo(\varepsilon)$ is the general version of a vector bundle with connection twisted by a transport 2-functor $\mathrm{tra}$. According to Sections \ref{sec4_1} and \ref{sec3_3}, the twists can thus be Breen-Messing gerbes or non-abelian bundle gerbes with connection.

Depending on the choice of the representation $\rho$, our twisted vector bundles can be translated into more familiar language. Let us demonstrate this in the  case of Example \ref{ex1}, in which the twist is a line 2-bundle with connection, i.e. a transport 2-functor
\begin{equation*}
\mathrm{tra}: \mathcal{P}_2(M) \to \mathrm{Vect}(\C)\text{-}\mathrm{Mod}
\end{equation*}  
with $\mathcal{BB}\C^{\times}$-structure. In order to obtain the usual twisted vector bundles, we restrict the target 2-category to $\mathcal{B}\mathrm{Vect}(\C)$, the monoidal category of complex vector spaces considered as a 2-category.  The following Lie category $\mathrm{Gl}$ is appropriate: its objects are the natural numbers $\N$, and it has only morphisms between equal numbers, namely all matrices $\mathrm{Gl}_n(\C)$. The composition is the product of matrices. The Lie category $\mathrm{Gl}$ is strictly monoidal: the tensor product of two objects $m,n\in\N$ is the product $nm\in\N$, and the one of two matrices $A\in\mathrm{Gl}(m)$ and $B\in\mathrm{Gl}(n)$ is the ordinary tensor product $A\otimes B\in \mathrm{Gl}(m\times n)$. In fact, $\mathrm{Gl}$ carries a second monoidal structure coming from the sum of natural numbers and the direct sum of matrices, so that $\mathrm{Gl}$ is actually a \emph{bipermutative category}, see Example 3.1 of \cite{baas1}.

Notice that we have a canonical inclusion functor 
$\iota: \mathcal{B}\C^{\times} \hookrightarrow \mathrm{Gl}$,
which induces another inclusion
\begin{equation*}
\iota_{*}:\transport{}{2}{\mathcal{BB}\C^{\times}}{\mathcal{B}\mathrm{Vect}(\C)} \to \transport{}{2}{\mathcal{B}\mathrm{Gl}}{\mathcal{B}\mathrm{Vect}(\C)}
\end{equation*}
of line 2-bundles with connection into more general vector 2-bundles with connection. Here we have used the representation
\begin{equation*}
\rho: \mathcal{B}\mathrm{Gl} \to \mathcal{B}\mathrm{Vect}
\end{equation*} 
obtained as a generalization of Example \ref{ex1} from $\C^{\times}=\mathrm{Gl}_1(\C)$ to $\mathrm{Gl}_n(\C)$ for all $n\in \N$. The composition $\rho \circ \iota$ reproduces the representation of Example \ref{ex1}.

Using the above inclusion, the given transport 2-functor $\mathrm{tra}$ induces a transport 2-functor $\iota_{*}\circ \mathrm{tra}: \mathcal{P}_2(M) \to \mathcal{B}\mathrm{Vect}(\C)$ with $\mathcal{B}\mathrm{Gl}$-structure, and one can study transport transformations
\begin{equation*}
A:\mathrm{tra} \to \mathrm{tra}^{\infty}_{\rho}
\end{equation*} 
in that greater 2-category $\transport{}{2}{\mathcal{B}\mathrm{Gl}}{\mathcal{B}\mathrm{Vect}(\C)}$. Along the lines of the general procedure described above, we have transport functors $\holo(g)$ and $\holo(h)$ coming from the descent data of $\mathrm{tra}$ and $A$, respectively. In the present particular situation, the first one takes values in the category $\Lambda_{\rho\circ\iota}\mathcal{B}\mathrm{Vect}_1(\C)$ whose objects are one-dimensional complex vector spaces and whose morphisms from $V$ to $W$ are invertible linear maps $f: W \otimes \C \to \C \otimes V$. Similar to Lemma \ref{lem6}, this category is equivalent to the category $\mathrm{Vect}_1(\C)$ of one dimensional complex vector spaces itself. Thus, the transport functor $\holo(g)$ with $\mathcal{B}\C^{\times}$-structure is a complex line bundle $L$ with connection over $Y^{[2]}$. The second transport functor, $\holo(h)$, takes values in the category $\Lambda_{\rho\circ\iota}\mathcal{B}\mathrm{Vect}(\C)$. This category is equivalent to the category $\mathrm{Vect}(\C)$ itself. It has $\Lambda_{\iota}\mathcal{B}\mathrm{Gl}$-structure, which is equivalent to $\mathrm{Gl}$. Thus, $\holo(h)$ is a transport functor with values in $\mathrm{Vect}(\C)$ and $\mathrm{Gl}$-structure. It thus corresponds to a finite rank vector bundle $E$ over $Y$ with connection. 

Since all identifications we have made so far a functorial, the morphisms $\holo(f)$ and $\holo(\varepsilon)$ of transport functors induce morphisms of vector bundles that preserve the connections, namely an associative morphism
\begin{equation*}
\mu: \pi_{23}^{*}L \otimes \pi_{12}^{*}L \to \pi_{13}^{*}L
\end{equation*}
of line bundles over $Y^{[2]}$, and a morphism
\begin{equation*}
\varrho: \pi_2^{*}E \otimes L \to \pi_1^{*}E
\end{equation*}
of vector bundles over $Y$ which satisfies a compatibility condition corresponding to \erf{62}. This reproduces the definition of a twisted vector bundle with connection \cite{bouwknegt1}.
We remark that the 2-form $\rho$ that corresponds to the smooth 2-functor $\mathrm{tra}^{\infty}_{\rho}$ which was the target of the transport transformation $A$ we have considered, is related to the curvature of the connection on the vector bundle $E$: it requires that
\begin{equation*}
\mathrm{curv}(E) = I_n \cdot (\mathrm{curv}(L) - \pi^{*}\rho)\text{,}
\end{equation*}
where $I_n$ is the identity matrix and $n$ is the rank of $E$. This condition can be derived similar to Lemma \ref{lem8}.

\setsecnumdepth{3}

\section{Surface Holonomy}

\label{sec5}
\label{sec:paralleltransport}

From the viewpoint of a  transport 2-functor, parallel transport and holonomy are basically evaluation on paths or bigons.  
Let $\mathrm{tra}: \mathcal{P}_2(M) \to T$ be a transport 2-functor with $\mathcal{B}\mathfrak{G}$-structure on $M$. Its fibres over points $x,y\in M$ are objects $\mathrm{tra}(x)$ and $\mathrm{tra}(y)$ in $T$, and we say that its \emph{parallel transport along a path} $\gamma:x \to y$ is given by the 1-morphism
\begin{equation*}
\mathrm{tra}(\gamma): \mathrm{tra}(x) \to \mathrm{tra}(y)
\end{equation*} 
in $T$, and its \emph{parallel transport along a bigon} $\Sigma: \gamma \Rightarrow \gamma'$ is given by the 2-morphism
\begin{equation*}
\mathrm{tra}(\Sigma): \mathrm{tra}(\gamma) \Rightarrow \mathrm{tra}(\gamma')
\end{equation*}
in $T$. 
The rules how parallel transport behaves under the composition of paths and bigons are precisely the axioms of the 2-functor $\mathrm{tra}$; see \cite[Definition A.5]{schreiber6}. We give some examples. If $\gamma_1:x \to y$ and $\gamma_2:y \to z$ are composable paths, the separate parallel transports along the two paths are related to the one along their composition by the compositor
\begin{equation*}
c_{\gamma_1,\gamma_2}: \mathrm{tra}(\gamma_2) \circ \mathrm{tra}(\gamma_1) \Rightarrow \mathrm{tra}(\gamma_2\circ\gamma_1)
\end{equation*}
of the 2-functor $\mathrm{tra}$.
If $\id_x$ is the constant path at $x$, the parallel transport along $\id_x$ is related to the identity at the fibre $\mathrm{tra}(x)$ by the unitor
\begin{equation*}
u_x: \mathrm{tra}(\id_x) \Rightarrow \id_{\mathrm{tra}(x)}\text{.}
\end{equation*}
The parallel transports along vertically composable bigons $\Sigma:\gamma_1\Rightarrow \gamma_2$ and $\Sigma':\gamma_2 \Rightarrow \gamma_3$ obey
\begin{equation*}
\mathrm{tra}(\Sigma' \bullet \Sigma) = \mathrm{tra}(\Sigma') \bullet \mathrm{tra}(\Sigma)\text{.}
\end{equation*}
In the following we focus on certain bigons that parameterize surfaces; the parallel transport along these bigons will be called the holonomy of the transport 2-functor $\mathrm{tra}$.

\subsection{Markings and Fundamental Bigons}

\label{sec:holonomy}

Surface holonomy of gerbes has so far only be studied in the abelian case, i.e. for gerbes with structure 2-group $\mathfrak{G}=\mathcal{B}S^1$; see e.g. \cite{murray}. It is understood  that an abelian gerbe with connection over $M$  provides an $S^1$-valued surface holonomy for smooth maps $\phi:S \to M$ defined on \emph{closed oriented surfaces} $S$. Various extensions have been studied for oriented surfaces with boundary \cite{carey2,gawedzki1}, closed unoriented (in particular unorientable) surfaces \cite{schreiber1}, and unoriented surfaces with boundary \cite{gawedzki8}.

Let us try  to explain why the step from  abelian to non-abelian surface holonomy is so difficult, by looking at the analogous but easier situation of ordinary holonomy of a  connection on a principal bundle $P$ over $M$. In the abelian case, say with structure group $S^1$, holonomy is defined for \emph{closed oriented curves} in $M$. Passing to a non-abelian structure group $G$, we have two possibilities. The first is  to choose additional structure on the closed oriented curve, namely a point $x\in M$, in which case the holonomy is a well-defined automorphism of the fibre $P_x$ of the bundle $P$  over the point $x$. The second possibility is to project the value of the holonomy into an appropriate quotient, for example along the map $\mathrm{Aut}(P_x) \to G\red$ into the set $G\red$ of conjugacy classes of $G$. This quotient is designed such that a different choice of $x$ yields the same element in $G\red$, and so $P$ has a well-defined $G\red$-valued holonomy for closed oriented curves.

Choosing a point in a closed oriented curve means to represent it as a  path (up to thin homotopy). 
The objective of this section is to describe how to represent a closed oriented surface $\phi:S \to M$ as a bigon in $M$. 
It will be convenient to adapt the following standard terminology from the theory of Riemann surfaces to our setting. Suppose $S$ is a closed oriented surface of genus $g$.  A \emph{marking} of $S$ is a point $x\in S$ together with a   set $\EuScript{M}=\left \lbrace \alpha_i,\beta_i \right \rbrace_{i=1}^g$ of paths $\alpha_i:x \to x$ and $\beta_i:x\to x$ such that the homotopy classes $[\alpha_i]$ and $[\beta_i]$ of the paths form a presentation of the fundamental group of $S$ based at $x$ with the relation that $[\tau_{\EuScript{M}}] = 1$, where
\begin{equation*}
\tau_{\EuScript{M}} := \beta_g^{-1}\circ  \alpha_g^{-1} \circ  \beta_g \circ \alpha_g  \circ \hdots  \circ \beta_1^{-1}\circ  \alpha_1^{-1} \circ  \beta_1 \circ \alpha_1\text{.}
\end{equation*}
In other words,
\begin{equation*}
\pi_1(S,x) = \left \langle  [\alpha_i],[\beta_i] \;|\; [\tau_{\EuScript{M}}] = 1  \right \rangle\text{.}
\end{equation*}

Next we formulate --- in the language of bigons ---  the statement that the path $\tau_{\EuScript M}$ can be collapsed to the point $x$ in such a way that the collapsing homotopy parameterizes the surface $S$. The challenge is that such a homotopy cannot simultaneously be a bigon and a good parameterization. We propose the following notion.

\begin{definition}
\label{def:fundamentalbigon}
A \emph{fundamental bigon} with respect to the marking $(x,\EuScript M)$ is a bigon $\Sigma: \tau_{\EuScript{M}} \Rightarrow \id_x$ such that there exist:
\begin{enumerate}[(a)]

\item 
a polygon $\mathcal{P} \subset \R^2$ and a surjective smooth map $\pi: \mathcal{P} \to S$ that is an embedding when restricted to the interior of $\mathcal{P}$ and preserves the orientations. 

\item
a surjective continuous map $k: [0,1]^2 \to \mathcal{P}$ that restricts to a diffeomorphism between the interiors and preserves the orientations, such that there exists a vertex $v_0$ of $\mathcal{P}$ with $v_0=k(s,0)=k(s,1)=k(1,t)$ for all $0 \leq s,t\leq 1$.

\item
a smooth map $h: [0,1] \times \mathcal{P} \to S$ with $h_0 = \pi$ and $h_1 \circ k  = \Sigma$, such that for each edge $e$ of $\mathcal{P}$ the restriction $h|_{[0,1] \times e}$ has rank one, and $h(-,v)=x$ for each vertex $v$. 

\end{enumerate}
\end{definition}

In this definition the polygon $\mathcal{P}$ serves as the standard parameter domain of the surface, and the map $\pi: \mathcal{P} \to S$ is such a parameterization. The map $k: [0,1]^2 \to \mathcal{P}$ is responsible for the transition between  the standard parameter domain $\mathcal{P}$ and the parameter domain $[0,1]^2$ for bigons. 
The map $h$ ensures that the fundamental bigon $\Sigma$ is homotopic to the \quot{bigonized} standard parameterization, $\pi \circ k$. The various conditions on $k$ and $h$ ensure that the homotopy $h \circ k$ is constantly equal to $x$ over the boundary parts of $[0,1]^2$ that are parameterized by $(s,0)$, $(s,1)$, and $(1,t)$, and ensure that it restricts to a thin homotopy between the path $(\pi \circ k)(0,-)$ and $\tau_{\EuScript M}$ over the remaining boundary part $(0,t)$.

\begin{comment}
such that there exist
\begin{enumerate}[(a)]

\item 
a smooth surjective map $\pi: \mathcal{P}_{4g} \to S$ whose restriction to the interior is an embedding.

\item
a smooth map $h: [0,1] \times \C \to S$ such that $h_0|_{\mathcal{P}_{4g}} = \pi$ and $h_1|_{[0,1]^2} = \Sigma$.

\end{enumerate}

that there exists  a regular deformation of $\Sigma$: a map $f:[0,1]^2 \to S$ with the following properties:
\begin{enumerate}[(i)]

\item 
It is a fixed-ends-homotopy between the path $\tau_f(t) := f(0,t)$ and $\id_x$.  That is, it satisfies  $f(s,0)=f(s,1)=f(1,t)=x$ for all $s,t$. 

\item
It is a regular parameterization of $S$. That is, $f$ is continuous and surjective and a smooth embedding when restricted to the interior $(0,1)^2$. 

\item
It represents the bigon $\Sigma$ up to homotopy. That is, there exists a smooth map $h:[0,1] \times [0,1]^2 \to S$ such that $h(\sigma,s,0)=h(\sigma,s,1)=h(\sigma,1,t)=x$, such that $h(0,s,t)=f(s,t)$ and $h(1,s,t)=\Sigma(s,t)$, and such that the homotopy $h(-,0,-)$ between $\tau_f$ and $\tau_{\EuScript M}$ is thin.  

\end{enumerate}
\end{comment}

A marking will be called \emph{good}, if it admits a fundamental bigon. The following three lemmata elaborate some properties of good markings and fundamental bigons. 

\begin{lemma}
Every closed oriented surface $S$ has a good marking.
\end{lemma}

\begin{proof}
It is well-known that every surface $S$ of positive genus $g>0$ has a \quot{fundamental polygon} $\mathcal{P}\subset \C$ with $4g$ edges, together with a map $\pi: \mathcal{P} \to S$ that has all  required properties. It can be arranged such that all vertices go to a single point $x\in S$, and ---
if the $4g$ edges are parameterized by $A_1,B_1,A_1',B_1',A_2,...: [0,1] \to \mathcal{P}$ in counter-clockwise order starting at a vertex $v_0$ --- then \begin{equation}
\label{pathrel}
\pi(A_i(1-t)) = \pi(A_i'(t))
\quand
\pi(B_i(1-t)) = \pi(B_i'(t))\text{.}
\end{equation} 
Let $D \subset \C$ denote the disc centered at $-1\in \C$. It is obvious that $\mathcal{P}$ and $D$ are homeomorphic. The homeomorphism can be arranged such that
(i) it preserves orientations, (ii) 
it is a smooth embedding when restricted to the complement of the vertices of $\mathcal{P}$, and (iii)
the vertices map to points equidistantly distributed over the boundary of $D$, with $v_0$ at $0\in \C$.
We define
\begin{equation*}
k': [0,1]^2 \to D: (s,t) \mapsto (1-s)(e^{2\pi\mathrm{i}t}-1)\text{.}
\end{equation*}
Under the homeomorphism between $\mathcal{P}$ and $D$, this gives the map $k$ with all required properties.

The map $h: [0,1] \times \mathcal{P} \to S$ is chosen such that $h_1: \mathcal{P} \to S$ is  constantly equal to $x$ in neighborhoods of all vertices, and such that \erf{pathrel} remains true with $h_1$ instead of $\pi$. Then we define
\begin{equation*}
\Sigma := h_1 \circ k
\end{equation*}
This is smooth because $h_1$ is locally constant at all points where $k$ is not smooth. We consider for $0 \leq k \leq 4g-1$ the map $p_{k}: [0,1] \to [0,1]: t \mapsto \frac{k+t}{4g}$ that squeezes the interval into the $k$-th of $4g$ many pieces. Then, we define for $i=1,..,g$
\begin{equation*}
\alpha_i := \pi\circ \Sigma(0,-) \circ p_{4i-4}
\quand
\beta_i := \pi \circ \Sigma(0,-) \circ p_{4i-3}\text{,} \end{equation*}
which give smooth maps with sitting instants, each going from  $x$ to $x$, Thus, $(x,\EuScript M)$ with $\EuScript M := \left \lbrace \alpha_i,\beta_i \right \rbrace$ is a marking, and \erf{pathrel} imply that $\Sigma$ is a bigon $\Sigma: \tau_{\EuScript M} \Rightarrow \id_x$.

The case of genus $g=0$ can be treated in a similar way using the disc $D_1$ with one marked point; this is left as an  exercise.
\end{proof}

The main idea behind the definition of a fundamental bigon is that the integral of a 2-form  $\omega\in \Omega^2(S)$ over $S$ can be computed over a fundamental bigon.

\begin{lemma}
\label{fundamentalbigonintegral}
Suppose $S$ is a closed oriented surface with a good marking $(x,\EuScript M)$. Let $\Sigma$ be a fundamental bigon with respect to $(x,\EuScript M)$. Then,
\begin{equation*}
\int_S \omega = \int_{[0,1]^2}\Sigma^{*}\omega
\end{equation*}
for all 2-forms $\omega \in \Omega^2(S)$.
\end{lemma}

\begin{proof}
We choose the structure  $(\mathcal{P},k,\pi,h)$ of  Definition \ref{def:fundamentalbigon}. Then, 
\begin{equation*}
\int_{S} \omega = \int_{\mathcal{P}} \pi^{*}\omega = \int_{\mathcal{P}} h_1^{*}\omega = \int_{\mathrm{int}(\mathcal{P})} h_1^{*}\omega = \int_{(0,1)^2} \Sigma^{*}\omega = \int_{[0,1]^2} \Sigma^{*}\omega \text{.}
\end{equation*}
The second equality holds because the homotopy $h$ between $h_0=\pi$ and $h_1$ has rank one on the boundary. The other equalities hold immediately due to the assumptions on all the involved maps. 
\end{proof} 

Finally, we show that the choice of a fundamental bigon is essentially unique. 

\begin{lemma}
\label{lem:choicefundamentalbigon}
Let $S$ be a closed oriented surface and  $(x,\EuScript{M})$ be a marking. Suppose $\Sigma \maps \tau_{\EuScript{M}} \Rightarrow \id_x$ and $\Sigma': \tau_{\EuScript{M}} \Rightarrow \id_x$ are two fundamental bigons with respect to the marking $(x,\EuScript{M})$. Then, $\Sigma$ and $\Sigma'$ are thin homotopy equivalent in the sense of Section \ref{sec:path2groupoid}. 
\end{lemma}

\begin{proof}
We choose for both fundamental bigons the structures $(\mathcal{P},k,\pi,h)$ and $(\mathcal{P}',k',\pi',h')$ of Definition \ref{def:fundamentalbigon}. We can assume that $\mathcal{P}=\mathcal{P}'$, since any two polygons are diffeomorphic, and such diffeomorphism can be absorbed into $\pi'$ and $k'$. We can assume that $\mathcal{P}$ is convex; in that case it is easy to see that $k$ and $k'$ are homotopic, with the homotopy  fixing the three boundary components that map to $v_0$, and restricting to a homotopy with values in $\partial P$ over the forth boundary component. 
\begin{comment}
The homotopy is given by
\begin{equation*}
H(\sigma,s,t) := \sigma k(s,t) + (1-\sigma)k(s,t)
\end{equation*}
\end{comment}
The homotopies $h$ and $h'$ restrict on the boundary to thin homotopies $\pi|_{\partial P} \simeq \tau_{\EuScript M}$ and $\pi'|_{\partial P} \simeq \tau_{\EuScript M}$. 
By a version of Alexander's trick, we obtain a smooth homotopy $\pi \simeq \pi'$ that fixes all vertices and has rank one over the edges. 
\begin{comment}
One first changes $\pi$ into a homotopic map $\tilde \pi$ that coincides with $\pi'$ over the boundary. One can still invert $\pi'$ to give a smooth map
\begin{equation*}
\alxydim{}{\mathrm{int}(\mathcal{P}) \ar[r]^-{\tilde\pi} & S\setminus \mathrm{im}(\pi'|_{\mathrm{int}(\mathcal{P})}) \ar[r]^-{\pi'^{-1}} & \mathrm{int}(\mathcal{P})}
\end{equation*}
and then extends that by the identity to a smooth map $\mathcal{P} \to \mathcal{P}$. By Alexander's trick, this map is homotopic to the identity. Composing with $\pi'$ gives a  homotopy between $\tilde\pi$ and $\pi'$ that fixes the boundary. Replacing $\tilde\pi$ by $\pi$ gives the claimed homotopy.
\end{comment}
Then we have chains of homotopies
\begin{equation*}
\Sigma \simeq h_1 \circ k \simeq h_0 \circ k \simeq \pi \circ k\simeq \pi \circ k' \simeq \pi' \circ k' \simeq h_0' \circ k' \simeq h_1' \circ k' \simeq \Sigma' 
\end{equation*}
which are all smooth and have rank one restricted over the boundary. By dimensional reasons, this homotopy can at most have rank two everywhere. This means that it is a thin homotopy between $\Sigma$ and $\Sigma'$.
\end{proof}

Next we come to the definition of surface holonomy. Since the generic transport 2-functor is not strict, we have to deal with the difference between the 1-morphism $\mathrm{tra}(\phi_{*}\tau_{\EuScript M})$ and the 1-morphism
\begin{equation*}
\mathrm{tra}_{\phi,\EuScript M} := \prod_{i=1}^g  \mathrm{tra}(\alpha_i) \circ \mathrm{tra}(\beta_i) \circ \mathrm{tra}(\alpha_i^{-1}) \circ \mathrm{tra}(\beta_i^{-1})\text{,}
\end{equation*}
for which we may agree for an arbitrary convention how to put parentheses in case $T$ has a non-trivial associator. The relation between these two 1-morphisms is given by a 2-morphism
\begin{equation*}
c^{\phi,\mathrm{tra}}: \mathrm{tra}_{\phi,\EuScript M} \Rightarrow \mathrm{tra}(\phi_{*}\tau_{\EuScript M}) 
\end{equation*}
made up from the compositors of $\mathrm{tra}$ in a unique way, due to the coherence axiom for compositors (this is axiom (F3) in \cite[Appendix A]{schreiber6}).

\begin{definition}
\label{def:surfaceholonomy}
Let  $\mathrm{tra}:\mathcal{P}_2(M) \to T$ be a transport 2-functor. Suppose
 $S$ is a closed oriented surface equipped with a good marking $(x,\EuScript{M})$ and a smooth map  $\phi:S \to M$. Let $\Sigma$ be a fundamental bigon for $(x,\EuScript{M})$. The 2-morphism
\begin{equation*}
\mathrm{Hol}_{\mathrm{tra}}(\phi,x,\EuScript{M}) := \mathrm{tra}(\phi_{*}\Sigma) \bullet c^{\phi,\mathrm{tra}}: \mathrm{tra}_{\phi,\EuScript M} \Rightarrow \mathrm{tra}(\id_{\phi(x)}) 
\end{equation*}
in $T$ is called the \emph{surface holonomy} of $\mathrm{tra}$. 
\end{definition}

Note that the surface holonomy  is independent of the choice of the fundamental bigon due to Lemma \ref{lem:choicefundamentalbigon}.
On the other hand, the  surface holonomy does depend on the choice of the  marking $(x,\EuScript{M})$. Even worse, it is not invariant under isomorphisms between transport 2-functors. It is the purpose of the following discussion to improve these dependence issues.

\subsection{Reduced Surface Holonomy}

We consider a Lie 2-group $\mathfrak{G}$  defined from a smooth crossed module $(G,H,t,\alpha)$, and restrict our attention to  transport 2-functors
\begin{equation*}
\mathrm{tra}: \mathcal{P}_2(M) \to T
\end{equation*}
with ${\mathcal{B}\mathfrak{G}}$-structure. Our goal is to replace the 2-functor $\mathrm{tra}$ by another 2-functor which takes values in a certain quotient of $\mathfrak{G}$ where surface holonomy becomes more rigid. In order to construct this quotient, we write $[G,H] \subset H$ for the  subgroup of $H$ that is generated by all elements of the form $h^{-1}\alpha(g,h)$, for $h\in H$ and $g\in G$. The following lemma follows from the  axioms of a crossed module; see Definition \ref{def:crossedmodule}).
\begin{lemma}
\label{subgroup}
The  subgroup $[G,H]$ is normal in $H$ and contains the commutator subgroup,
\begin{equation*}
[H,H] \;\unlhd\;  [G,H] \;\unlhd\; H\text{.}
\end{equation*}
\end{lemma}

\begin{comment}
\begin{proof}
For the first claim suppose $x,h\in H$ and $g\in G$. Using the axioms of the crossed module one checks  that
\begin{equation*}
x\cdot h^{-1}\alpha(g,h) \cdot x^{-1} = h'^{-1}\alpha(g',h')
\end{equation*}
for $h' := xhx^{-1}$ and $g' := t(x)gt(x)^{-1}$. For the second claim note that for $x,h\in H$ and $g=t(x)^{-1}$ we have $h^{-1}\alpha(g,h) = h^{-1}g^{-1}hg$, i.e. every commutator is an element of $[G,H]$.
\end{proof}
\end{comment}

We come to the following important definition.

\begin{definition}
\label{def:redgroup}
Let $\mathfrak{G}$ be a Lie 2-group. Then, the group $\mathfrak{G}\red := H/[G,H]$ is called the reduction of $\mathfrak{G}$.
\end{definition}

By Lemma \ref{subgroup}, the reduction $\mathfrak{G}\red$  is a subgroup of the abelianization $H/[H,H]$ and hence abelian. Note that the projection to the quotient yields a strict 2-functor
\begin{equation*}
\Red: \mathcal{B}\mathfrak{G} \to \mathcal{BB}\mathfrak{G}\red\text{.}
\end{equation*}
\begin{example}
\label{ex4}
Let us look at  examples for Lie 2-groups $\mathfrak{G}$:
\begin{enumerate}[(a)]
\item
In the case of the 2-group $\mathcal{B}A$ for $A$ an ordinary abelian Lie group,  $[1,A]$ is the trivial group, and $(\mathcal{B}A)\red=A$.  

\item
Let $G$ be a  Lie group and let $\mathcal{E}G$ the associated 2-group of inner automorphisms, see Section \ref{sec6_3}. Since $\alpha$ here is the conjugation action of $G$ on itself, $[G,G]$ is indeed the commutator subgroup. Thus $(\mathcal{E}G)\red$ is an abelian Lie group, the abelianization of $G$.

\item
Let $H$ be a connected Lie group and let $\AUT(H)$ be its automorphism 2-group, with $G=\aut(H)$. In this case $[H,H] = [G,H]$  if and only if all automorphisms of $H$ are inner. For $H=S^1$  with $\aut(S^1) = \Z_2$ we get $[\Z_2,S^1]=S^1$, so that $\AUT(S^1)\red=1$. 

\end{enumerate}
\end{example}

Let $\mathrm{tra}: \mathcal{P}_2(M) \to T$ be a transport 2-functor with $\mathcal{B}\mathfrak{G}$-structure. The following definition introduces the replacement for $\mathrm{tra}$ that we want to consider. 

\begin{definition}
A \emph{reduction} of $\mathrm{tra}$ is a transport 2-functor $\mathrm{tra}\red:\mathcal{P}_2(M) \to \mathcal{B}\mathfrak{G}$ with $\mathcal{B}\mathfrak{G}$-structure such that the following two conditions are satisfied:
\begin{enumerate}[(i)]

\item 
\label{reductioni}
There exists a pseudonatural equivalence $i \circ \mathrm{tra}\red \cong \mathrm{tra}$.

\item
\label{reductionii}
The 2-functor $\Red \circ \mathrm{tra}\red$ is normalized in the sense of \cite[Appendix A]{schreiber6}.

\end{enumerate}
\end{definition}

In the following proposition we prove an existence and uniqueness result for reductions. 

\begin{proposition}
\label{prop:reduction}
If $i:\mathcal{B}\mathfrak{G} \to T$ is an equivalence of categories, then every transport 2-functor with $i$-structure admits a reduction. Moreover, two  reductions of the same transport 2-functor are pseudonaturally equivalent. \end{proposition}

\begin{proof}
Since $i: \mathcal{B}\mathfrak{G} \to T$ is an equivalence of 2-categories we can choose an inverse 2-functor $j: T \to \mathcal{B}\mathfrak{G}$, so that $\mathrm{tra}':=j \circ \mathrm{tra}: \mathcal{P}_2(M) \to \mathcal{B}\mathfrak{G}$ is a transport 2-functor with $\mathcal{B}\mathfrak{G}$-structure; see Section \ref{operations}. Since $j$ is inverse to $i$ it is clear that $i \circ \mathrm{tra}' \cong \mathrm{tra}$. The 2-functor $\mathrm{tra}'$ is the first approximation of the reduction whose existence we want to prove.

We consider for any surjective submersion $\pi:Y \to M$ the following strictly commutative diagram of 2-categories and 2-functors:
\begin{equation*}
\alxydim{@C=1.5cm}{\diffco{\mathfrak{G}}{2}{\pi} \ar[r]^-{\mathcal{P}} & \transsmooth{\id_{\mathcal{B}\mathfrak{G}}}{2}{} \ar[d]_{\mathrm{Rec}} \ar[r]^-{\Red \circ -} & \transsmooth{\Red}{2}{} \ar[d]^-{\mathrm{Rec}} \\ & \transport{}{2}{\mathcal{B}\mathfrak{G}}{\mathcal{B}\mathfrak{G}} \ar[r]_-{\Red \circ -} & \transport{}{2}{\mathcal{B}\mathfrak{G}}{\mathcal{BB}\mathfrak{G}\red}\text{.}} \end{equation*}
By Theorem \ref{th:class} the surjective submersion $\pi:Y \to M$ can be chosen such that $\mathrm{Rec} \circ \mathcal{P}$ is essentially surjective, i.e. we can chose a degree two differential $\mathfrak{G}$-cocycle $\xi' \in \diffco{\mathfrak{G}}{2}{\pi}$  such that $\mathrm{Rec}(\mathcal{P}(\xi)) \cong \mathrm{tra}'$. After a refinement of the surjective submersion to an open cover, Lemma \ref{equiavelncenormalized} allows us to assume that $\xi'$ is isomorphic to another cocycle  $\xi''=((A,B),(g,\varphi),\psi,f)$ with $\psi_i=1$, $g_{ii}=1$, $f_{iij}=f_{ijj}=1$ and $f_{iji}=y_{ij}^{-1}\alpha(x_{ij},y_{ij})$ for all $i\in I$ and elements $x_{ij}\in G$, $y_{ij}\in H$. Let $\mathrm{tra}'':= \mathrm{Rec}(\mathcal{P}(\xi''))$; we will prove that this is the reduction we a looking for. 

Condition \erf{reductioni} for reductions is satisfied because $\mathrm{tra}''=\mathrm{Rec}(\mathcal{P}(\xi'')) \cong \mathrm{Rec}(\mathcal{P}(\xi'))\cong \mathrm{tra}'$. In order to show that the second condition is satisfied, we  claim that the object $\Red \circ \mathcal{P}(\xi'')$ in $\transsmooth{\Red}{2}{}$  is normalized in the sense of  \cite[Definition 2.2.1]{schreiber6}. Then,  by \cite[Lemma 3.3.4]{schreiber6}, the reconstructed transport 2-functor  is normalized. By commutativity of the diagram, this 2-functor is $\Red \circ \mathrm{tra}''$. 

In order to prove that claim, we first remark that  $(\mathrm{triv},g,\psi,f) := \mathcal{P}(\xi'')$ is \quot{almost} normalized. Indeed, the conditions $g_{ii}=1$ and $\psi_i=1$ imply under the 2-functor $\mathcal{P}$ that $\id_{\mathrm{triv}_i} = \Delta^{*}g$ and $\psi = \id_{\Delta^{*}g}$. The remaining conditions $g \circ \Delta_{21}^{*}g= \id_{\pi_1^{*}\mathrm{triv}_i}$ and $\Delta_{121}^{*}f = \id_{\Delta_{11}^{*}g}$ are not satisfied. However, since $f_{iji}=y_{ij}^{-1}\alpha(x_{ij},y_{ij})$, after composing with $\Red$ we do have $\Red(f_{iji})=1$; and hence satisfy the remaining conditions for a normalized descent object.

Finally, the claim that the reduction is unique up to pseudonatural equivalence follows immediately from the assumption that $i$ is an equivalence. \end{proof}

In the following we will always replace a given transport 2-functor $\mathrm{tra}:\mathcal{P}_2(M) \to T$ with $\mathcal{B}\mathfrak{G}$-structure by a reduction $\mathrm{tra}\red: \mathcal{P}_2(M) \to \mathcal{B}\mathfrak{G}$, and consider the \emph{reduced surface holonomy} 
\begin{equation*}
\mathrm{RHol}_{\mathrm{tra}}(\phi,x,\EuScript{M})  := \mathrm{Hol}_{\Red \circ \mathrm{tra}\red}(\phi,x,\EuScript{M}) \in \mathfrak{G}\red\text{.}
\end{equation*}
The following lemma shows that the reduced surface holonomy is well-defined.

\begin{lemma}
\label{independenceofreduction}
The reduced surface holonomy is independent of the choice of the reduction $\mathrm{tra}\red$. \end{lemma}

\begin{proof}
Suppose we have two choices, whose compositions with $\Red$ we denote by $\mathrm{t}_1,\mathrm{t}_2\maps \mathcal{P}_2(M) \to \mathcal{BB}\mathfrak{G}\red$. Since $\mathrm{t}_1$ and $\mathrm{t}_2$ are normalized we have $\mathrm{RHol}_{\mathrm{t}_i}(\phi,x,\EuScript{M}) = \mathrm{t}_i(\phi_{*}\Sigma) \bullet c^{\phi,\mathrm{t}_i}$ for $i=1,2$.

By Lemma \ref{prop:reduction} $\mathrm{t}_1$ and $\mathrm{t}_2$ are related by a pseudonatural equivalence $\eta: \mathrm{t}_1 \to \mathrm{t}_2$. In the following we use that the target 2-category $\mathcal{BB}\mathfrak{G}\red$ of $\mathrm{t_1}$ and $\mathrm{t_2}$ is strict, and both horizontal and vertical composition of 2-morphisms is just multiplication in the group $\mathfrak{G}\red$. Since this group is abelian, it follows that an arbitrary composition of 2-morphisms is just the product of their values, in any order.

Under these preliminaries, axiom (T2) for $\eta$ applied to the 2-morphism $\phi_{*}\Sigma$ becomes the identity
\begin{equation}
\label{RHol1}
\eta(\phi_{*}\tau_{\EuScript{M}}) \cdot t_2(\phi_{*}\Sigma) = t_1(\phi_{*}\Sigma) \cdot \eta(\id_{\phi(x)})\text{.}
\end{equation}
Axiom (T1) for $\eta$ applied to the 1-morphism $\phi_{*}\tau_{\EuScript{M}}$ becomes
\begin{equation}
\label{RHol2}
\eta(\phi_{*}\tau_{\EuScript{M}}) \cdot c^{\phi,\mathrm{t}_1} = c^{\phi,\mathrm{t}_2} \cdot \prod_{i=1}^{g} \eta(\alpha_i)\eta(\beta_i)\eta(\alpha_i^{-1})\eta(\beta_i^{-1})\text{.}
\end{equation} 
We are allowed to permute the factors on the right hand side. Since $\mathrm{t_1}$ and $\mathrm{t_2}$ are normalized, we have by \cite[Lemma A.7 (ii)]{schreiber6} that $\eta(\id_{\phi(x)})=1$ and $\eta(\alpha_i)\eta(\alpha_i^{-1})=1$. Now combining \erf{RHol1} with \erf{RHol2} yields $\mathrm{t}_1(\phi_{*}\Sigma) \bullet c^{\phi,\mathrm{t}_1}=\mathrm{t}_2(\phi_{*}\Sigma) \bullet c^{\phi,\mathrm{t}_2}$, which is the equality between the surface holonomies. \end{proof}

\subsection{Properties of Reduced Surface Holonomy}

The reduced surface holonomy has nice properties that we will reveal in the following.
First we treat  the dependence of the reduced surface holonomy on the marking. 

We arrange the  set of markings of $S$ into equivalence classes, as it is common in the theory of Riemann surfaces. 
Two markings $(x,\EuScript{M})$ and $(x',\EuScript{M}')$, with $\EuScript{M}=\left \lbrace \alpha_i,\beta_i \right \rbrace$ and $\EuScript{M}' = \left \lbrace \alpha_i',\beta_i' \right \rbrace$, are called \emph{equivalent}, if there exists a path $\gamma: x \to x'$ and bigons $\Delta_i: \alpha_i \Rightarrow   \gamma \circ \alpha_i' \circ \gamma^{-1}$ and $\Delta_i': \beta_i \Rightarrow   \gamma \circ \beta_i' \circ \gamma^{-1}$. 

\begin{lemma}
\label{lemmarkingindep}
Let $(x,\EuScript M)$ and $(x',\EuScript M')$ be equivalent good markings. Then, 
\begin{equation*}
\mathrm{RHol}_{\mathrm{tra}}(\phi,x,\EuScript M) = \mathrm{RHol}_{\mathrm{tra}}(\phi,x',\EuScript M')\text{.}
\end{equation*}
\end{lemma}

\begin{proof}
Each equivalence between markings splits into a sequence of steps, in which a step is either conjugation of all paths in the marking by a path $\gamma:x \to y$, or changing one of the paths via a bigon. Thus it suffices to prove the invariance of the reduced surface holonomy under each of these steps. Let $\mathrm{tra}\red:\mathcal{P}_2(M) \to \mathcal{B}\mathfrak{G}$ be a reduction of $\mathrm{tra}$, and let $\mathrm{t} := \Red \circ \mathrm{tra}\red$.

In the first part of the proof we look at a conjugation. Let $\Sigma: \tau_{\EuScript M} \Rightarrow \id_x$ be a fundamental bigon for a marking $(x,\EuScript M)$. Denote by $(x',\EuScript M_{\gamma})$ the marking obtained by conjugating all paths with $\gamma$. We have $\tau_{\EuScript M_{\gamma}} = \gamma \circ \tau_{\EuScript M} \circ \gamma^{-1}$, and  $\Sigma^{\gamma} :=  \id_{\gamma} \circ \Sigma \circ \id_{\gamma^{-1}}$ is a fundamental bigon for $(x',\EuScript M_{\gamma})$. We have the two paths $\mathrm{t}_{\phi,\EuScript M}$ and $\mathrm{t}_{\phi,\EuScript M_{\gamma}}$ and the compositors $c^{\phi,\mathrm{t}}: \mathrm{t}_{\phi,\EuScript M} \Rightarrow \mathrm{t}(\tau_{\EuScript M})$ and $c_{\gamma}^{\phi,\mathrm{t}}: \mathrm{t}_{\phi,\EuScript M_{\gamma}} \Rightarrow \mathrm{t}(\tau_{\EuScript M_{\gamma}})$, so that 
\begin{equation*}
\mathrm{Hol}_{\mathrm{t}}(\phi,x,\EuScript M) = \mathrm{t}(\Sigma) \cdot c^{\phi,t}
\quand
\mathrm{Hol}_{\mathrm{t}}(\phi,x',\EuScript M') = \mathrm{t}(\Sigma^{\gamma}) \cdot c_{\gamma}^{\phi,t}\text{.}
\end{equation*}
Here, and in the following we will suppress writing $\phi_{*}$ when we apply $\mathrm{t}$ to paths or bigons in $S$.

There is a unique compositor 2-morphism $c: \mathrm{t}_{\phi,\EuScript M_{\gamma}} \Rightarrow \mathrm{t}(\gamma) \circ \mathrm{t}_{\phi,\EuScript M} \circ \mathrm{t(\gamma)}^{-1}$, and another unique compositor 2-morphism $c': \mathrm{t}(\tau_{\EuScript M_{\gamma}}) \Rightarrow t(\gamma) \circ \mathrm{t}(\tau_{\EuScript M}) \circ t(\gamma)^{-1}$.  
We claim that the diagram
\begin{equation*}
\alxydim{@C=1.8cm}{\mathrm{t}_{\phi,\EuScript M_{\gamma}} \ar@{=>}[d]_{c} \ar@{=>}[r]^{c^{\phi,\mathrm{t}}_{\gamma}} & \mathrm{t}(\tau_{\EuScript M_{\gamma}}) \ar@{=>}[d]^{c'} \ar@{=>}[r]^-{\mathrm{t}(\Sigma^{\gamma})} & \id_{t(\phi(x))} \ar@{=>}[d]^{\id} \\ t(\gamma) \circ \mathrm{t}_{\phi,\EuScript M} \circ t(\gamma)^{-1} \ar@{=>}[r]_-{\id \circ c^{\phi,\mathrm{t}} \circ \id}  & t(\gamma) \circ \mathrm{t}(\tau_{\EuScript M}) \circ t(\gamma)^{-1} \ar@{=>}[r]_{\id \circ \mathrm{t}(\Sigma) \circ \id} &  t(\gamma) \circ \id \circ t(\gamma)^{-1} }
\end{equation*} 
is commutative. The subdiagram on the left commutes because of the coherence of compositors enforced by axiom (F3). In order to see that the subdiagram on the right commutes we note that $c' \eq c_{\tau_{\EuScript M_{\gamma}} \circ \gamma^{-1}, \gamma} \cdot c_{\gamma^{-1},\tau_{\EuScript M_{\gamma}}}$. Axiom (F2) applied to the 2-morphisms $\id$, $\Sigma^{\gamma}$ and $\id$ gives the commutativity with $c_{\gamma^{-1},\gamma} \cdot c_{\gamma^{-1},\id}$ on the right hand side of the diagram. But the latter expression is equal to $1$ for the normalized 2-functor $\mathrm{t}$.

 Next we compute $c$. We note the compositor between $\mathrm{t}(\gamma \circ \alpha \circ \gamma^{-1})$ and $\mathrm{t}(\gamma) \circ \mathrm{t}(\alpha) \circ \mathrm{t}(\gamma^{-1})$ is $c_{\alpha \circ \gamma^{-1},\gamma}\cdot c_{\gamma^{-1},\alpha}$,  while the one between  $\mathrm{t}(\gamma \circ \alpha^{-1} \circ \gamma^{-1})$ and $\mathrm{t}(\gamma) \circ \mathrm{t}(\alpha^{-1}) \circ \mathrm{t}(\gamma^{-1})$ is
$c_{\alpha^{-1} \circ \gamma^{-1},\gamma}\cdot c_{\gamma^{-1},\alpha^{-1}}$. These two compositors are actually inverse to each other; this follows from axiom (F3) for compositors and the fact that $\mathrm{t}$ is normalized. Also note that $\mathrm{t}(\gamma) \circ \mathrm{t}(\gamma^{-1})=\id$; again since $\mathrm{t}$ is normalized. Thus, we see that $c=1$. 
All together, we see that $\mathrm{Hol}_{\mathrm{t}}(\phi,x,\EuScript M)=\mathrm{Hol}_{\mathrm{t}}(\phi,x',\EuScript M')$.

In the second part of the proof we look at a change of one of the paths via a bigon. If $\EuScript M = \left \lbrace \alpha_i,\beta_i \right \rbrace_{i=1}^{g}$ is a marking, we look at another marking $\EuScript M'$ consisting of the same paths except for an index $i_0$ where a different path $\alpha_{i_0}'$ is present, related to $\alpha_{i_0}$ by a bigon $\Delta: \alpha'_{i_0} \Rightarrow \alpha_{i_0}$. Let  $\Delta^{\#}: \alpha_{i_0}'^{-1} \Rightarrow \alpha_{i_0}^{-1}$ be the \quot{horizontally inverted} bigon given by $\Delta^{\#}(s,t) := \Delta(s,1-t)$. 
We consider the 2-morphism
\begin{equation*}
\tilde \Delta := \id \circ (\Delta \circ \id_{\beta_{i_0}} \circ \Delta^{\#} \circ \id_{\beta_{i_0}^{-1}}) \circ \id : \tau_{\EuScript M'} \Rightarrow \tau_{\EuScript M}\text{,}
\end{equation*}
with the outer identities meant for all factors with indices not equal to $i_0$. We also consider the 2-morphism
\begin{equation*}
\mathrm{t}_{\Delta} := \id \circ (\mathrm{t}(\Delta) \circ \id \circ \mathrm{t}(\Delta^{\#}) \circ \id) \circ \id: \mathrm{t}_{\phi,\EuScript M'}\Rightarrow \mathrm{t}_{\phi,\EuScript M}\text{.}
\end{equation*}
If $\Sigma: \tau_{\EuScript M} \Rightarrow \id_x$ is a fundamental bigon for $\EuScript M$, then $\Sigma' := \Sigma \bullet \tilde\Delta$ is a fundamental bigon for $\EuScript M'$. We claim that the diagram
\begin{equation*}
\alxydim{@R=0.6cm@C=1.5cm}{\mathrm{t}_{\phi,\EuScript M'} \ar@{=>}[r]^-{c^{\prime\;\phi,\mathrm{t}}} \ar@{=>}[dd]_{t_{\Delta}} & \mathrm{t}(\tau_{\EuScript M'}) \ar@{=>}[dr]^{\mathrm{t}(\Sigma')} \ar@{=>}[dd]^{\mathrm{t}(\tilde\Delta)} \\ && \id\\ \mathrm{t}_{\phi,\EuScript M} \ar@{=>}[r]_-{c^{\phi,\mathrm{t}}} & \mathrm{t}(\tau_{\EuScript M}) \ar@{=>}[ur]_{\Sigma} }
\end{equation*}
is commutative. Indeed, the rectangular part commutes due to axiom (F2) for the 2-functor $\mathrm{t}$, and the triangular part commutes by definition of $\Sigma'$. It remains to notice that $t_{\Delta} = 1 \in \mathfrak{G}\red$, since the contributions of $\Delta$ and $\Delta^{\#}$ cancel. Then, the diagram implies the coincidence of the surface holonomies.
\end{proof}

We  summarize our results about the surface holonomy of non-abelian gerbes in the following theorem, which constitutes the main result of this section.

\begin{theorem}
\label{th:rhol}
Let $\mathfrak{G}$ be a Lie 2-group,  $T$ be a 2-category, and $i: \mathcal{B}\mathfrak{G} \to T$ be an equivalence of 2-categories. Let $\mathrm{tra}: \mathcal{P}_2(M) \to T$ be a transport 2-functor with $\mathcal{B}\mathfrak{G}$-structure. Let $S$ be a closed oriented surface with a good marking $(x,\EuScript M)$, and let $\phi:S \to M$ be a smooth map. Then, the reduced surface holonomy 
\begin{equation*}
\mathrm{RHol}_{\mathrm{tra}}(\phi,x,\EuScript M) \in \mathfrak{G}\red
\end{equation*}
depends only on the equivalence class of the marking, and only on the isomorphism class of $\mathrm{tra}$.
\end{theorem}

\begin{proof}
After Lemma \ref{lemmarkingindep} it only remains to prove that the reduced surface holonomy only depends on the isomorphism class of $\mathrm{tra}$. Indeed, if $\mathrm{tra}\cong\mathrm{tra}'$ is an isomorphism we can choose the \emph{same} reduction for both.  
\end{proof}

In Section \ref{sec4} we have collected  various equivalences between transport 2-functors and  concrete models of gerbes with connections. These equivalences were mostly established by  zigzags of canonically defined 2-functors. On the level of isomorphism classes, such zigzags give a well-defined bijection. Since the reduced surface holonomy is invariant under isomorphisms, these bijections convey the concept of reduced surface holonomy to a well-defined, isomorphism-invariant concept for each of these concrete models:
\begin{enumerate}[(i)]

\item
The bijection
\begin{equation*}
\hat H^2(M,\mathfrak{G})  \cong \hc 0 \transport{}{2}{\mathrm{Gr}}{T}
\end{equation*}
of Theorem \ref{th:class} between the non-abelian differential cohomology and transport 2-functors equips non-abelian cohomology with a well-defined, isomorphism-invariant  concept of surface holonomy. In particular, it equips degree two Deligne cocycles and (fake-flat) Breen-Messing  cocycles with a well-defined, isomorphism-invariant  concept of surface holonomy. We show below in Proposition \ref{propabsf} in the case of Deligne cocycles it reproduces  the existing concept of surface holonomy. In the case  of Breen-Messing cocycles such a concept was previously unknown; its is one of the main results of this article.   

\item 
The equivalence of Theorem \ref{th:bgrb} induces a canonical bijection
\begin{equation*}
\hc 0 \mathfrak{BGrb}^{\nabla}(M) 
\cong 
\hc 0 \mathrm{Trans}_{\mathcal{BB}S^1}(M,\mathcal{B}(S^1\text{-}\mathrm{Tor}))
\end{equation*}
of  between isomorphism classes of bundle gerbes with connections and isomorphism classes of transport 2-functors, and so equips bundle gerbes with connection with a well-defined, isomorphism-invariant  concept of surface holonomy. We show below in Proposition \ref{propabsf} that it reproduces  the existing concept of surface holonomy for abelian gerbes.

\item
The injective map 
\begin{equation*}
\hc 0 (H\text{-}\mathfrak{BGrb}^{\nabla_{\!f\!f}}(M)) \to \hc 0 (\transport{}{2}{\mathcal{B}\mathrm{AUT}(H)}{\mathcal{B}(\hbitor)})
\end{equation*}
of Theorem \ref{th:nagrb}
between isomorphism classes of fake-flat non-abelian $H$-bundle gerbes with connection and transport 2-functors, equips fake-flat non-abelian $H$-bundle gerbes with connection with a well-defined, isomorphism-invariant  concept of surface holonomy. Such a concept was previously unknown; it is one of the main results of this article.

\end{enumerate}

Now we prove that the reduced surface holonomy of Theorem \ref{th:rhol} reduces in the case $\mathfrak{G} =\mathcal{B}S^1$ to the established notion of surface holonomy for connections on $\mathcal{B}S^1$-gerbes. Let us first recall that established notion. 

It can be given for an element  $\xi\in \hat H^2(M,\mathcal{B}S^1)$ of the (abelian) differential cohomology group which classifies  $\mathcal{B}S^1$-gerbes. It is solely based on the fact that  differential cohomology   sits in an exact sequence
\begin{equation*}
\alxydim{}{1 \ar[r]& \Omega^2_{\mathrm{cl},\Z}(M) \ar[r]& \Omega^2(M) \ar[r]^-{\mathcal{I}}& \hat H^2(M,\mathcal{B}S^1) \ar[r]& H^3(M,\Z) \ar[r]& 0}\text{,}
\end{equation*}
in which $\Omega^2_{\mathrm{cl},\Z}(M)$ denotes the closed 2-forms with integral periods \cite{brylinski1}. If $\phi:S \to M$ is a smooth map from a closed oriented surface $S$ to $M$, then $\phi^{*}\xi \in \hat H^2(S,\mathcal{B}S^1)$ projects to zero in  $0\in H^3(S,\Z)$ for dimensional reasons, and so is the image of a 2-form $B \in \Omega^2(S)$ under the  map $\mathcal{I}$ in the sequence. The abelian surface holonomy of $\xi$ around $\phi$ is then defined as
\begin{equation*}
\mathrm{AbHol}_{\xi}(\phi) := \exp \left (- \int_S B \right ) \in S^1\text{.}
\end{equation*}
Since the difference between two choices of 2-forms is a closed 2-form with integral periods, this integral is independent of the choice of $B$.

\begin{proposition}
\label{propabsf}
Let $\mathrm{tra}: \mathcal{P}_2(M) \to T$ be a transport 2-functor with $\mathcal{BB}S^1$-structure. Let $\xi \in \hat H^2(M,\mathcal{B}S^1)$ be the associated class under the bijection of Theorem \ref{th:class}. Then,
\begin{equation*}
\mathrm{RHol}_{\mathrm{tra}}(\phi,x,\EuScript{M}) = \mathrm{AbHol}_{\xi}(\phi)\text{,}
\end{equation*}
i.e. the reduces surface holonomy of $\mathrm{tra}$ coincides with the established notion of surface holonomy for abelian gerbes. In particular, it is independent of the marking. 
\end{proposition}

\begin{proof}
For any smooth manifold $X$, we have an equality between $\Omega^2(X)$ and the objects of the 2-category $\diffco{\mathcal{B}S^1}{2}{X}$. The latter is isomorphic to the 2-category $\mathrm{Funct}^{\infty}(\mathcal{P}_2(X),\mathcal{B}S^1)$ of smooth 2-functors via Theorem \ref{th1}, and we have computed in Lemma \ref{sfdfbsone}  that
\begin{equation}
\label{sfholco1}
F(\Sigma) = \exp \left (- \int_{\Sigma} B \right ) 
\end{equation}
for any bigon $\Sigma \in BX$. 
By construction we have a commutative diagram
\begin{equation*}
\alxydim{}{\Omega^2(X) \ar[r]^-{\mathcal{I}} \ar[d] & \hat H^2(X,\mathcal{B}S^1) \ar[d]  \\  \mathrm{Funct}^{\infty}(\mathcal{P}_2(X),\mathcal{B}S^1) \ar[r] & \hc 0 \mathrm{Trans}^2_{\mathcal{BB}S^1}(X,\mathcal{BB}S^1)\text{,}}
\end{equation*}
in which the map on the bottom is the inclusion of smooth 2-functors in transport 2-functors.

Note that $\mathfrak{G}\red=S^1$ with Example \ref{ex4} (a) and $\Red = \id_{\mathcal{BB}S^1}$. Let $\mathrm{tra}\red$ be a reduction of the given transport 2-functor, corresponding to the class $\xi$ under the vertical map on the right hand side of the diagram. After pullback along $\phi:S \to M$ we obtain the 2-form $B \in \Omega^2(S)$ such that $\mathcal{I}(B) = \xi$. Let $F: \mathcal{P}_2(S) \to \mathcal{B}S^1$ be the smooth 2-functor that corresponds to  $B$. By commutativity of the diagram, we find an isomorphism 
\begin{equation}
\label{sfholco2}
F \cong \phi^{*}\mathrm{tra}\red
\end{equation}
between transport 2-functors. 

If now $\Sigma$ is a fundamental bigon for the marking $(x,\EuScript M)$, then we have
on one side
\begin{equation*}
\mathrm{AbHol}_{\xi} (\phi) := \exp \left(- \int_S B  \right ) \stackrel{\text{Lemma \ref{fundamentalbigonintegral}}}{=} \exp \left(- \int_\Sigma B  \right ) \stackerf{sfholco1}{=} F(\Sigma) = \mathrm{Hol}_F(\id,x,\EuScript M)\text{,}
\end{equation*}
where the last equality is Definition \ref{def:surfaceholonomy} combined with the fact that $F$ is a strict 2-functor (with trivial compositors). Lemma \ref{independenceofreduction} applied to the equivalence \erf{sfholco2} shows that \begin{equation*}
\mathrm{Hol}_F(\id,x,\EuScript M) = \mathrm{Hol}_{\phi^{*}\mathrm{tra}\red}(\id,x,\EuScript M)\text{.}
\end{equation*}
We have on the other side
\begin{equation*}
\mathrm{Hol}_{\phi^{*}\mathrm{tra}\red}(\id,x,\EuScript M)=\mathrm{Hol}_{\mathrm{tra}\red}(\phi,x,\EuScript M)=\mathrm{RHol}_{\mathrm{tra}}(\phi,x,\EuScript M)\text{,}
\end{equation*}
this shows the claimed coincidence.
\end{proof}

Finally, let us comment on the dependence of the reduced surface holonomy on the equivalence class of a marking.  Let $S$ be a closed oriented surface, and let $(x,\EuScript M)$ and $(x',\EuScript M')$ two markings. A standard result from the theory of Riemann surfaces is:
\begin{enumerate}[(i)]

\item 
There exists an orientation-preserving diffeomorphism $f:S \to S$ such that $(x',\EuScript M') = f(x,\EuScript M)$,

\item
The diffeomorphism $f$ is homotopic to the identity map $\id_S$ if and only if $(x,\EuScript M)$ and $(x',\EuScript M')$ are equivalent.

\begin{comment}
By assumption there exists a smooth homotopy $h: [0,1] \times S \to S$ between $\id_S$ and  $f$. Define the paths $\gamma_s(t) := h(st,x)$, so that $\gamma := \gamma_1: x \to f(x)$. Define further the paths $\alpha_i^s(t) := h(s,\alpha_i(t))$. Then define
\begin{equation*}
\Delta_i(s,t) := (\gamma_s^{-1} \circ \alpha_i^s \circ \gamma_s)(t)\text{,}
\end{equation*}
which is a bigon from $\alpha_i$ to $\gamma^{-1} \circ \alpha'_i \circ \gamma$. Similarly, we get bigons $\Delta_i'$ from $\beta_i$ to $\gamma^{-1} \circ \beta_i' \circ \gamma$.   
\end{comment} 

\end{enumerate}
The quotient of the group of orientation-preserving diffeomorphisms modulo those homotopic to the identity is the mapping class group of $S$, denoted $\mathcal{M}(S)$. Thus, $\mathcal{M}(S)$ acts on the reduced surface holonomies of a surface $\phi:S\to M$. Currently we do not know what this action is; in particular, we do not know if it is trivial or not.

%\noarxiv
%\nolinks

\kobib{../../bibliothek/tex}

\end{document}